\DeclareMathAlphabet{\mathcal}{OMS}{cmsy}{m}{n}
\newtheorem{theorem}{Theorem}[section]
\newtheorem{lemma}{Lemma}[section]
\newtheorem{proposition}{Proposition}[section]
\newtheorem{corollary}{Corollary}[section]
\newtheorem{remark}{Remark}[section]
\begin{document}
\title[Singularity spectra of $b$-adic independent cascade function]{The graph, range and level set singularity spectra of $b$-adic independent cascade function}

\author{Xiong Jin}
\address{INRIA Rocquencourt, B.P. 105, 78153 Le Chesnay Cedex, France}
\email{xiongjin82@gmail.com}

\begin{abstract}
With the ``iso-H\"older" sets of a function we naturally associate subsets of the graph, range and level set of the function. We compute the associated singularity spectra for a class of statistically self-similar multifractal functions, namely the $b$-adic independent cascade function.
\end{abstract}

\keywords{Hausdorff dimension; Multifractal; Random function; Graph; Range; Level set; Independent cascades}
\subjclass[2000]{Primary: 26A30; Secondary: 28A78, 28A80}
\maketitle

\section{Introduction}
\label{intro}

\subsection{The singularity spectra of multifractal function.}

Let $f$ be a real-valued function defined on an interval $I$. For any subset $E$ of $I$ let
\begin{equation*}
G_f(E)=\{(x,f(x)):x\in E\},\ R_f(E)=\{f(x):x\in E\}
\end{equation*}
be the graph and the range of $f$ over the set $E$; for any $y\in R_f(E)$ let
\begin{equation*}
L_{f}^{y}(E)=\{(x,f(x)): x\in E, f(x)=y\}
\end{equation*}
be the level set of $f$ over the set $E$ at level $y$. When $(f(t))_{t\in I}$ is a stochastic process, to find the Hausdorff dimension of these sets, denoted by $\dim_{H} S_f(E)$ for $S\in\{G,R,L^y\}$, is a classical and important question in probability and geometric measure theory. The original works on these questions could be traced back to 1953,~\cite{Levy} by L\'evy or~\cite{Tay} by Taylor, regarding the Hausdorff dimension and the Hausdorff measure of the range of Brownian motion. Since then, many progresses have been made in this subject for fractional Brownian motions, stable L\'evy processes and many other processes and functions \cite{BlGe1961,BlGe1962,TaWe1966,Horo1968,Pru1969,Mi1971,Orey1971,Ber1972,Haw1974,Ber1983,Ka1985b,MaWi1986,PrzUr1989,Ur1990b,BeUr1990,Bert1990,Led1992,HuLau1993,Hunt1998,Ro2003b,Fe2005,KhXi2005,DeFa2007} (see also the survey paper~\cite{Xiao2004} and the references therein).

As a typical example, in~\cite{Ka1985b} Kahane studies the fractional Brownian motion $(X(t))_{t\in\mathbb{R}_+}$, i.e., given $\beta\in(0,1)$, the unique centered continuous Gaussian process satisfying $X(0)=0$ and $\mathbb{E}(|X(s)-X(t)|^2)=|s-t|^{2\beta}$ for any $s,t\in \mathbb{R}_+$. He shows that for any compact set $E\subset \mathbb{R}_+$, almost surely
\begin{equation*}\label{upG}
\dim_H G_X(E) = \frac{\dim_H E}{\beta} \wedge (\dim_H E+1-\beta),\ \dim_H R_X(E)  = \frac{\dim_H E}{\beta} \wedge 1
\end{equation*}
and if $\dim_H E>\beta$, then there exists a random open set $G \subset R_X(E)$ such that $\mathbb{P}(G\neq\emptyset)>0$ and
\begin{equation*}\label{upL}
y\in G \Rightarrow \dim_H L_{X}^{y}(E) = \dim_H E-\beta.
\end{equation*}

Notice that here $\beta\in(0,1)$ is the (single) H\"older exponent of the function $X$ and it is uniform on $E$, indeed $X$ is a monofractal on $\mathbb{R}_+$. When the function is not monofractal, a natural parallel to the above formulas is to take $E=E_f(h)$, the set of points at which the pointwise H\"older exponents of $f$ are all equal to a constant $h>0$, and to verify formulas like
\[
\dim_H G_f(E_f(h)) = \frac{\dim_H E_f(h)}{h} \wedge \big(\dim_H E_f(h)+1-h\big)
\]
and
\[
\dim_H R_f(E_f(h))  = \frac{\dim_H E_f(h)}{h} \wedge 1.
\]
The set $E_f(h)$ and its Hausdorff dimension $\dim_H E_f(h)$ naturally appeared in the multifractal analysis of functions, which consists in computing the singularity spectrum $d_f: h\ge 0\mapsto \dim_H E_f(h)$. This, together with the above parallel formulas, leads us to consider the following graph, range and level set singularity spectra:
\begin{equation*}
d_f^{S}:h\ge 0\mapsto \dim_H S_f(E_f(h)),\ \ S\in\{G,R,L^y\}.
\end{equation*}
To our best knowledge, such singularity spectra have not been considered before.

\subsection{A general upper bound}

At first, it is natural to seek for general upper bounds for these new singularity spectra. Such bounds can be found thanks to the following generalization of Lemma 8.2.1 in~\cite{Adl}, Theorem 6 of Chapter 10 in~\cite{Ka1985b} and Lemma 2.2 in~\cite{Xiao1}.

First we note that the pointwise H\"older exponent considered in this paper is defined by
\begin{equation}\label{hf}
h_f(x):=\liminf_{r\to 0^+} \frac{1}{\log r} \log \left(\sup_{s,t\in B(x,r)}|f(s)-f(t)|\right),
\end{equation}
which covers the definition of the lower local dimension
\begin{equation}\label{hmu}
h_\mu(x):=\liminf_{r\to 0^+} \frac{1}{\log r}\log \mu\Big(B(x,r)\Big)
\end{equation}
of a measure $\mu$ support on an interval $[a,b]$ if we set $f(x)=\mu([a,x])$ for $x\in[a,b]$.

\medskip

Also, we should introduce the level set of $f$ over set $E$ in $\theta$-direction (since in this paper we can only show the level set singularity spectrum in ``Lebesgue almost every direction"): For $\theta\in(-\pi/2,\pi/2)$ denote by $l_\theta$ the line in $\mathbb{R}^2$ passing through the origin and making an angle $\theta$ with the $y$-axis (clockwise). For any $y\in l_\theta$, denote by $l_{y,\theta}^{\perp}$ the line perpendicular to $l_\theta$,  passing through $y$. Denote by $\mathrm{Proj}_\theta$ the orthogonal projection from $\mathbb{R}^2$ onto $l_\theta$. Define $R_{f,\theta}(E)=\mathrm{Proj}_\theta(G_f(E))$. Then for each $y\in R_{f,\theta}(E)$, the level set of $f$ over the set $E$ in $\theta$-direction is defined by $L^y_{f,\theta}(E)=G_f(E)\cap l_{y,\theta}^{\perp}$. Notice that the typical level set $L^y_f(E)$ is just the level set $L^y_{f,\theta}(E)$ when $\theta=0$.

\medskip

Denote by $\dim_P$ the packing dimension. We have the following theorem:

\begin{theorem}\label{upper}

Let $E$ be any subset of $I$. Suppose that $\inf_{x\in E} h_f(x)=h>0$.

\medskip

\begin{itemize}
\item[(a)] For $D\in\{H,P\}$ we have
\begin{eqnarray*}
\dim_D G_f(E) &\le& \Big( \frac{\dim_D E}{h}\wedge \big(\dim_D E+1-h\big) \Big) \vee \dim_D E, \\
\dim_{D} R_f(E) &\le&  \frac{\dim_D E}{h} \wedge 1.
\end{eqnarray*}
\item[(b)] Suppose $h\le1$. Fix $\theta\in(-\pi/2,\pi/2)$. Let $\mu$ be any positive Borel measure defined on $l_\theta$. For any $\gamma>0$ define the set $R_{f,\theta}^{\mu,\gamma}(E):=\{y\in R_{f,\theta}(E): h_\mu(y) \ge \gamma\}$. If $\mu(R_{f,\theta}^{\mu,\gamma}(E))>0$ and $\dim_H E-h\cdot \gamma>0$, then for $\mu$-almost every $y\in R_{f,\theta}^{\mu,\gamma}(E)$,
$$
\dim_H L_{f,\theta}^{y}(E) \le \dim_H E-h\cdot \gamma.
$$
\end{itemize}
\end{theorem}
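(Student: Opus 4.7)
The plan is to exploit $\inf_{x\in E} h_f(x) = h$ through two-scale rectangular covers of the graph for part (a), and for part (b) to run a Fubini-type inequality that integrates against $\mu$ the local size of the fiber $L^y_{f,\theta}(E)$. First, for any $\epsilon>0$ the sets
\[
E_k := \bigl\{ x \in E : \sup_{s,t \in B(x,r)}|f(s)-f(t)| \le r^{h-\epsilon} \text{ for all } r < 1/k \bigr\}
\]
cover $E$ by the definition~\eqref{hf} of $h_f$, so by countable stability of $\dim_H$ and $\dim_P$ it suffices to prove every bound with $E$ replaced by some $E_k$.

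For part (a), fix $k$ and take a cover $(B(x_i,r_i))_i$ of $E_k$ with $r_i<1/k$. The graph piece $G_f(B(x_i,r_i)\cap E_k)$ lies in a $2r_i \times 2r_i^{h-\epsilon}$ rectangle, which I cover in two ways: by $\lceil r_i^{h-\epsilon-1}\rceil$ squares of side $r_i$ (useful when $h<1$), or by one square of side $\max(r_i,r_i^{h-\epsilon})$. Feeding the two covers into the definitions of Hausdorff dimension and of packing dimension via the identity $\dim_P A = \inf\{\sup_k \overline{\dim}_B A_k : A=\bigcup_k A_k\}$ yields the two bounds $\dim_D E/h$ and $\dim_D E+1-h$; the trivial bound $\dim_D E$ handles the case $h\ge 1$ (where $x\mapsto(x,f(x))$ is Lipschitz on each $E_k$). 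The range bound follows by projecting the first cover onto the $y$-axis, noting that each image is an interval of length $\le 2 r_i^{h-\epsilon}$, and that $\dim_D R_f(E)\le 1$ trivially since $R_f(E)\subset \mathbb{R}$.

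For part (b), further split $R_{f,\theta}^{\mu,\gamma}(E) = \bigcup_m F_m$ with $F_m = \{y : \mu(B(y,t)) \le t^{\gamma-\eta} \text{ for all } t<1/m\}$ for a small $\eta>0$, and fix $\alpha = \dim_H E - h\gamma + C(\epsilon+\eta)$ with $C$ chosen so that $\alpha + (h-\epsilon)(\gamma-\eta) \ge \dim_H E + \epsilon$. For a $\delta$-cover $(B(x_i,r_i))_i$ of $E_k$ with $r_i^{h-\epsilon}<1/m$ and $\sum_i r_i^{\dim_H E + \epsilon} < \eta'$, the projected interval $I_i := \mathrm{Proj}_\theta\bigl(B(x_i,r_i)\times[f(x_i)-r_i^{h-\epsilon}, f(x_i)+r_i^{h-\epsilon}]\bigr)$ has length at most $Cr_i^{h-\epsilon}$ (using $h\le 1$), and if $I_i \cap F_m\ne \emptyset$ then $\mu(I_i) \le C' r_i^{(h-\epsilon)(\gamma-\eta)}$ by absorbing $I_i$ into a $\mu$-ball centered at a point of $F_m$. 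Since $\{B(x_i,r_i) : y\in I_i\}$ covers the fiber $L^y_{f,\theta}(E)$, integrating the resulting $\mathcal{H}^\alpha_\delta$-bound against $\mu$ gives
\[
\int_{F_m} \mathcal{H}^\alpha_\delta(L^y_{f,\theta}(E))\, d\mu(y) \le C'' \sum_i r_i^{\alpha+(h-\epsilon)(\gamma-\eta)} \le C'' \eta'.
\]
Letting $\eta'\to 0$ at fixed $\delta$, then $\delta \to 0$ by monotone convergence, forces $\mathcal{H}^\alpha(L^y_{f,\theta}(E)) = 0$ for $\mu$-a.e.\ $y\in F_m$, and sending $\epsilon,\eta \to 0$ along countable sequences finishes the proof.

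The main obstacle will be the Fubini step in (b): one must coordinate three distinct scales (the cover radius $r_i$, the oscillation scale $r_i^{h-\epsilon}$, and the $\mu$-local-dimension threshold $1/m$ from $F_m$) and pick $\alpha$ so that, after absorbing the $\epsilon,\eta$ slack in each, the exponent in $\sum_i r_i^{\alpha+(h-\epsilon)(\gamma-\eta)}$ is still strictly above $\dim_H E$. Everything else reduces to standard covering arguments and the countable stability of Hausdorff and packing dimensions.
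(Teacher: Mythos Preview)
Your proposal is correct and follows the same covering strategy as the paper. The one organizational difference---reducing first to the sets $E_k$ (and in (b) also to $F_m$) where the H\"older and local-dimension bounds hold uniformly---is a genuine simplification: the paper instead takes covers of all of $E$ and must then prove a separate lemma (its Lemma~\ref{covering}) showing that within any such family of covers, every point of $E$ is eventually caught by a ball on which the oscillation and $\mu$-measure estimates hold. Your reduction sidesteps that entirely, and for packing dimension it dovetails with the modified box-counting characterization exactly as the paper's double decomposition $E_{i,k}$ does.

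One point in part (b) deserves an explicit sentence: when you write that $\{B(x_i,r_i): y\in I_i\}$ covers the fiber, note that $L^y_{f,\theta}(E_k)$ lives in $\mathbb{R}^2$, so what you actually use is that each intersection $R_i\cap l^\perp_{y,\theta}$ has diameter $O_\theta(r_i)$ rather than $O(r_i^{h-\epsilon})$. This holds because $l^\perp_{y,\theta}$ is never vertical for $\theta\in(-\pi/2,\pi/2)$ (its slope is $-\tan\theta$), so its intersection with a vertical strip of width $2r_i$ has length at most $2r_i/\cos\theta$; without this the Fubini bookkeeping would yield $\dim_H E/h - \gamma$ instead of $\dim_H E - h\gamma$. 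The paper obtains the same control by cutting each rectangle into $\sim r_i^{h-\epsilon-1}$ squares of side $r_i$ and noting that the line $l^\perp_{y,\theta}$ meets at most $|\tan\theta|+1$ of them.
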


If we replace $E$ by the set $E_f(h)=\{x\in I: h_f(x)=h\}$ for $h>0$, then Theorem~\ref{upper} provides us with general upper bounds of the graph, range and level set singularity spectra. These upper bounds are strongly related to the classical singularity spectrum $d_f$ (see Corollary \ref{upperspec}).

\medskip

From the multifractal analysis of functions we know that $d_f$ has a general upper bound given by the Legendre transform of the so-called scaling function or $L^q$-spectrum of $f$, defined by
\begin{equation}\label{tauq}
\tau_f(q)=\liminf_{r\to 0^+} \frac{1}{\log r} \log \sup \sum_i \mathrm{Osc}_f(B_i)^q,\ q\in\mathbb{R},
\end{equation}
where $\mathrm{Osc}_f(B_i)=\sup_{s,t\in B_i}|f(s)-f(t)|$ denotes the oscillation of $f$ over $B_i$ and the supremum is taken over all the families of disjoint closed intervals $B_i$ of radius $r$ with centers in $\{x\in I: \forall\ r>0,\ \mathrm{Osc}_f(B(x,r))>0\}$. Due to \cite{Jafw,BJ}, we have
\begin{equation*}\label{ieqspec}
d_{f}(h) \le \tau_f^*(h):=\inf_{q\in\mathbb{R}}qh-\tau_f(q)\quad (\forall h\ge 0),
\end{equation*}
a negative dimension meaning that $E_f(h)$ is empty. Then as a direct consequence, we have the following corollary of Theorem~\ref{upper}:

\begin{corollary}\label{upperspec}
For any $h>0$ we have
\begin{eqnarray*}
\dim_H G_f(E_f(h)) &\le&  \Big(\frac{d_{f}(h)}{h} \wedge \big(d_{f}(h)+1-h\big) \Big) \vee d_{f}(h) \\
&\le& \Big( \frac{\tau_{f}^*(h)}{h} \wedge \big(\tau_{f}^*(h)+1-h\big) \Big) \vee \tau_{f}^*(h),\\
\dim_H R_f(E_f(h)) &\le&  \frac{d_{f}(h)}{h}  \wedge 1 \le \frac{\tau_{f}^*(h)}{h} \wedge 1
\end{eqnarray*}
and with the same notations as in Theorem~\ref{upper}(b), for $\mu$-almost every $y\in R_{f,\theta}^{\mu,\gamma}(h)$,
\begin{equation*}
\dim_H L_{f,\theta}^y(E_f(h)) \le d_f(h)-h \cdot \gamma \le \tau_f^*(h)-h\cdot \gamma.
\end{equation*}
\end{corollary}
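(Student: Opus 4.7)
The plan is to apply Theorem~\ref{upper} directly with $E=E_f(h)$ and then combine the resulting bounds with the general Legendre bound $d_f(h)\le \tau_f^*(h)$ recalled just before the statement.

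If $E_f(h)=\emptyset$ then by convention $d_f(h)<0$ and the three inequalities in the corollary are vacuous, so I would restrict attention to $h>0$ with $E_f(h)\neq\emptyset$. By the very definition of $E_f(h)$ one has $h_f(x)=h$ for every $x\in E_f(h)$, so that $\inf_{x\in E_f(h)} h_f(x)=h>0$ and the hypothesis of Theorem~\ref{upper} is satisfied; moreover $\dim_H E_f(h)=d_f(h)$. Plugging $E=E_f(h)$ into Theorem~\ref{upper}(a) with $D=H$ yields at once the first inequalities for $\dim_H G_f(E_f(h))$ and $\dim_H R_f(E_f(h))$, and Theorem~\ref{upper}(b) applied to $E=E_f(h)$ gives, under the conditions $\mu(R^{\mu,\gamma}_{f,\theta}(E_f(h)))>0$ and $d_f(h)-h\gamma>0$, the first inequality for $\dim_H L^y_{f,\theta}(E_f(h))$.

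The second inequality in each line then follows by inserting $d_f(h)\le \tau_f^*(h)$ into the right-hand sides. Indeed, the three envelope functions $x\mapsto \bigl(x/h\wedge(x+1-h)\bigr)\vee x$, $x\mapsto x/h\wedge 1$ and $x\mapsto x-h\gamma$ are each non-decreasing in $x$ (both $x/h$ and $x+1-h$ have non-negative slope for $h>0$, and both $\min$ and $\max$ with a non-decreasing function preserve monotonicity), so replacing $d_f(h)$ by the larger quantity $\tau_f^*(h)$ can only enlarge the bounds, giving the desired second inequality in each of the three lines.

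There is no real obstacle: the corollary is essentially a bookkeeping consequence of Theorem~\ref{upper} combined with the already recalled multifractal upper bound $d_f\le \tau_f^*$. The only points requiring a word of comment are the trivial vacuous case $E_f(h)=\emptyset$ and the monotonicity of the envelope functions that allows the Legendre bound to be propagated through the right-hand sides.
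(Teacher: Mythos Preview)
Your proposal is correct and matches the paper's approach: the paper simply states that Corollary~\ref{upperspec} is a direct consequence of Theorem~\ref{upper} together with the Legendre bound $d_f(h)\le\tau_f^*(h)$, and you have spelled out exactly this, including the verification that $\inf_{x\in E_f(h)}h_f(x)=h$ and the monotonicity of the envelope functions needed to pass from $d_f(h)$ to $\tau_f^*(h)$.
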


\subsection{Main result} Corollary~\ref{upperspec} naturally raises the question: Do these upper bounds provide the exact dimensions, especially when $f$ obeys the multifractal formalism, i.e. $d_f(h)=\tau_f^*(h)$ for $h> 0$ ?

\medskip

In general the answer is negative. We can easily find a counterexample in case when $f$ is a monofractal function, that is $h_f(x)$ is equal to a constant $\beta\in(0,1)$ for all $x\in I$. Suppose, moreover, that the whole graph $G_f(I)=G_f(E_f(\beta))$ is irregular, that is its Hausdorff and lower box-counting dimensions are different. We have:
$$1-\beta+\tau_f^{*}(\beta)= 1-\beta+d_{f}(\beta)=2-\beta \ge \underline{\dim}_{B}G_f(E_f(\beta)) >\dim_H G_f(E_f(\beta)).$$
Such examples can be found in~\cite{Ur1990b,Fe2005}. To the contrary, if the whole graph is regular (like for the fractional Brownian motion mentioned before), then $\dim_H G_f(E_f(\beta))=1-\beta+\tau_f^{*}(\beta)$.

However, monofractal examples clearly represent a very restrictive class for our purpose. Simple multifractal examples are the following: Consider $f(x)=\mu([0,x])$ for $x\in[0,1]$, where $\mu$ is a probability measure fully supported by $[0,1]$, and assume that $f$ obeys the multifractal formalism with the exponent $\tilde{h}_f(x)$ defined by
\begin{equation}\label{limex}
\tilde{h}_f(x)=\lim_{r\to 0^+} \frac{1}{\log r} \log \Big(\sup_{s,t\in[x-r,x+r]}|f(s)-f(t)|\Big),
\end{equation}
whenever it exists. This property holds whenever $\mu$ is a Gibbs or a random cascade measure \cite{BMP,B2}. Then by using the results in~\cite{MaRi} on the multifractal analysis of the inverse measure $\mu^*=\mu\circ f^{-1}$ carried by the range of $f$, it is easy to check that the upper bounds in Corollary~\ref{upperspec} give the exact dimensions. But there, the graph and range singularity spectra are always a combination of the singularity spectra of $\mu$ and $\mu^*$, and the level set spectrum is trivial since $f$ is an increasing function.

\medskip

It is an interesting question to find examples of multifractal functions whose graph, range and level set singularity spectra can be calculated and are not trivial in the above sense. In this paper we consider the so-called $b$-adic independent cascade function introduced in~\cite{BMJ} as an extension to random functions of statistically self-similar measures introduced in \cite{Mand}. We obtain the graph and range singularity spectra for this class of random functions, in the so-called non-conservative case. The level set singularity spectrum is still open. However, inspired by the classical Marstrand theorem (see \cite{Mar1954}), we obtain the level set singularity spectrum in Lebesgue almost every direction.

\medskip

In a brief, the result is the following: Let $F$ be the $b$-adic independent cascade function (see Figure~\ref{Fpic} for an illustration and see Section \ref{badicF} for definition). For $h>0$ denote $S_F(h)=S_F(E_F(h))$ for $S\in\{G,R\}$. For $\theta\in(-\pi/2,\pi/2)$, recall that $l_\theta$ is the line in $\mathbb{R}^2$ passing through the origin and making an angle $\theta$ with the $y$-axis, and for any $y\in l_\theta$, $l_{y,\theta}^{\perp}$ is the line perpendicular to $l_\theta$,  passing through $y$. For $h> 0$ define $R_{F,\theta}(h)=\mathrm{Proj}_\theta(G_F(h))$, and for each $y\in R_{F,\theta}(h)$ define $L^y_{F,\theta}(h)=G_F(h)\cap l_{y,\theta}^{\perp}$.

\medskip

We have the following theorem (the assumptions (A1)-(A3) will be given later):

\begin{theorem}\label{mainthm}

Suppose that assumptions (A1)-(A3) hold.

\medskip

\begin{itemize}
\item[(a)]
Almost surely for all $h\in J_F=\{h> 0: \tau^*_F(h)>0\}$,
\begin{eqnarray*}
\dim_H G_F(h) &=& \Big( \frac{\tau_{F}^*(h)}{h} \wedge \big(\tau_{F}^*(h)+1-h\big) \Big) \vee \tau_{F}^*(h),\\
\dim_H R_F(h) &=& \frac{\tau^*_F(h)}{h} \wedge 1.
\end{eqnarray*}
Moreover, denote $G_F$ the whole graph, then almost surely
\[
\dim_H G_F = \dim_P G_F = \dim_B G_F =1-\tau_F(1).
\]

\item[(b)]
Almost surely for Lebesgue almost every $\theta\in (-\pi/2,\pi/2)$, for all $h\in (0,1)$ such that $\tau_F^*(h)-h>0$, for $\mu^{R}_{h,\theta}$ almost every $y\in R_{F,\theta}(h)$,
\[
\dim_H L^y_{F,\theta}(h)= \tau_F^*(h)-h,
\]
where $\mu_{h,\theta}^R$ is a positive Borel measure carried by $R_{F,\theta}(h)$ and it is absolutely continuous with respect to the one-dimensional Lebesgue measure on $l_\theta$.
\end{itemize}
\end{theorem}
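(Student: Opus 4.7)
The plan is to combine the general upper bounds of Corollary \ref{upperspec} with matching lower bounds built from auxiliary Mandelbrot cascade measures adapted to $F$. Because the cascade function is known under (A1)--(A3) to obey the multifractal formalism $d_F(h) = \tau_F^*(h)$ on $J_F$ (the result of \cite{BMJ}), Corollary \ref{upperspec} already delivers the upper bounds of part (a) for $\dim_H G_F(h)$ and $\dim_H R_F(h)$; the whole-graph upper bound $\overline{\dim}_B G_F \leq 1 - \tau_F(1)$ is obtained by covering $G_F$ at scale $b^{-n}$ by $\mathrm{Osc}_F(I)\cdot b^n$ boxes over each level-$n$ $b$-adic interval $I$ and invoking the definition of $\tau_F(q)$ at $q=1$.

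For the lower bounds, the core ingredient is a family of auxiliary random cascade measures $(\mu_h)_{h\in J_F}$ on $[0,1]$: $\mu_h$ is obtained by a Peyri\`ere-type tilt of the original cascade at the Legendre-dual parameter $q=q(h)$ (with $\tau_F'(q)=h$), is supported by $E_F(h)$, has $\dim_H\mu_h = \tau_F^*(h)$, and satisfies on its support the two-sided local mass estimate $\mu_h(B(x,r))\asymp r^{\tau_F^*(h)}$ together with an oscillation estimate $\mathrm{Osc}_F(I_n(x))\asymp b^{-nh}$ on the level-$n$ $b$-adic interval containing $x$; both follow from uniform large-deviation bounds along the cascade random walk, and the non-conservative hypothesis is used to guarantee integrability and non-degeneracy of the tilted cascade. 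The range lower bound is then read off the pushforward $\nu_h = F_\ast\mu_h$: the two-sided H\"older estimate yields $\nu_h(B(F(x),r^h))\asymp r^{\tau_F^*(h)}$ and hence local dimension $\tau_F^*(h)/h$, while in the regime $\tau_F^*(h)/h\geq 1$ a second-moment bound on the density (exploiting independence across $b$-adic subtrees) upgrades $\nu_h$ to an absolutely continuous measure. For the graph, the lifted measure $\pi_h(A) = \int \mathbf{1}_A(x,F(x))\,d\mu_h(x)$ obeys
\[
\pi_h\bigl(B((x_0,F(x_0)),r)\bigr) \lesssim \mu_h\bigl(B(x_0,r\wedge r^{1/h})\bigr),
\]
covering the bounds $\tau_F^*(h)$ for $h\geq 1$ and $\tau_F^*(h)/h$ for $h<1$. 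In the remaining regime $h<1$, $\tau_F^*(h)>h$, where the optimal bound is $\tau_F^*(h)+1-h$, I would introduce the smeared measure
\[
d\pi_h^\varepsilon(x,y) = \tfrac{1}{2\varepsilon}\mathbf{1}_{\{|y-F(x)|<\varepsilon\}}\,d\mu_h(x)\,dy,
\]
bound its Riesz $s$-energy uniformly in $\varepsilon$ for every $s<\tau_F^*(h)+1-h$ (the pairwise integrals decouple across distinct $b$-adic branches thanks to the cascade independence), and extract a weak-$*$ limit supported on $G_F(h)$; heuristically, an $r$-square around $(x_0,F(x_0))$ carries $\mu_h$-mass $\asymp r^{\tau_F^*(h)}$ in $x$ but only a fraction $r^{1-h}$ of the vertical oscillation, giving total mass $\asymp r^{\tau_F^*(h)+1-h}$. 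The whole-graph lower bound $\dim_H G_F \geq 1-\tau_F(1)$ then follows by optimizing the iso-H\"older bounds of part (a) over $h$, which at the critical $h=\tau_F'(1)$ gives $\tau_F^*(h)+1-h = 1-\tau_F(1)$.

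For part (b), I feed the measure $\pi_h$, whose dimension exceeds $1$ precisely when $\tau_F^*(h)>h$, into a Marstrand--Kaufman projection argument: for Lebesgue a.e.\ $\theta\in(-\pi/2,\pi/2)$, the projection $\mu_{h,\theta}^R := (\mathrm{Proj}_\theta)_\ast\pi_h$ is absolutely continuous with respect to 1D Lebesgue on $l_\theta$. A Mattila-type slicing theorem then disintegrates $\pi_h$ along the fibers of $\mathrm{Proj}_\theta$ and delivers $\dim_H L^y_{F,\theta}(h)\geq \tau_F^*(h)+1-h-1 = \tau_F^*(h)-h$ for $\mu_{h,\theta}^R$-almost every $y$, while the matching upper bound is Theorem~\ref{upper}(b) applied with $\gamma=1$, valid because absolute continuity forces $h_{\mu_{h,\theta}^R}(y)=1$. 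I expect two main obstacles: (i) the ``almost surely for all $h$'' (resp.\ ``almost surely, for Lebesgue a.e.\ $\theta$, for all $h$'') interchange of quantifiers, which requires building $(\mu_h)$ jointly in the analytic tilt parameter $q$ on a dense countable subset of $J_F$, deriving the mass and oscillation estimates uniformly on compact subsets of $J_F$, and extending by continuity of $\tau_F^*$; and (ii) the Riesz-energy bound for $\pi_h^\varepsilon$, where the coupling between the tilted measure $\mu_h$ and the function $F$ is intrinsic and must be disentangled using the multiplicative structure of the $b$-adic cascade and the associated Peyri\`ere martingale machinery.
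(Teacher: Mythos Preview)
Your overall architecture matches the paper: upper bounds from Corollary~\ref{upperspec}, lower bounds via tilted cascade measures $\mu_q$ pushed to graph and range, and a Marstrand--Mattila scheme for slices. But two of your proposed mechanisms for the lower bounds would fail as written, and you misidentify where the non-conservative hypothesis enters.

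First, your range argument via the pushforward $\nu_h=F_*\mu_h$ and the local estimate $\nu_h(B(F(x),r^h))\asymp r^{\tau_F^*(h)}$ does not work, because $F$ is nowhere monotone and highly oscillatory: the ball $B(F(x),\rho)$ picks up $\mu_h$-mass from \emph{all} preimages, not just those near $x$, so there is no pointwise control of that type. The same problem hits your graph heuristic ``an $r$-square carries only a fraction $r^{1-h}$ of the vertical oscillation'': that fraction is not a priori bounded, since the graph may revisit the same height many times. The paper handles both lower bounds by a direct Riesz-energy estimate for $\mu_q^G$ and $\mu_q^R$, and Remark~\ref{sharpbd} shows explicitly that the naive energy bound (which is what ``decoupling across distinct $b$-adic branches'' gives you) yields only $1+\tau(2q-1)-2\tau(q)$, strictly below the correct value except at $q=1$.

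Second, what makes the energy estimate sharp is not the smeared measure $\pi_h^\varepsilon$ but two ingredients you do not mention: (i) restricting $\mu_q$ to Cantor-like sets $\mathcal{C}_n(q,\epsilon)$ on which the products $W_{t|_n}$, $L_{t|_n}$ and the oscillations $O_U(t|_n)$ are pinned to their typical scales, so that the kernel can be replaced by a deterministic envelope; and (ii) the fact that $Z_W=F_W(1)$ has a \emph{bounded density} in the non-conservative case, which lets one integrate out the single independent factor $Z_W(v|_r^-)$ in the decomposition $F_W(v)-F_W(u)=W_{v|_r^-}Z_W(v|_r^-)+(\text{rest})$ conditionally on everything else. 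This bounded-density step is exactly where (A3) is used; your attribution of the non-conservative hypothesis to ``integrability and non-degeneracy of the tilted cascade'' is not the point (that is covered by (A1)--(A2)). For part~(b) the same two ingredients drive the uniform-in-$q$ version of the projection and slicing energies, which is why the paper can exchange ``a.s.'' with ``for all $h$'' and with ``Lebesgue-a.e.~$\theta$''.
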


\begin{figure}[b]
\begin{center} 
\includegraphics[width=\textwidth]{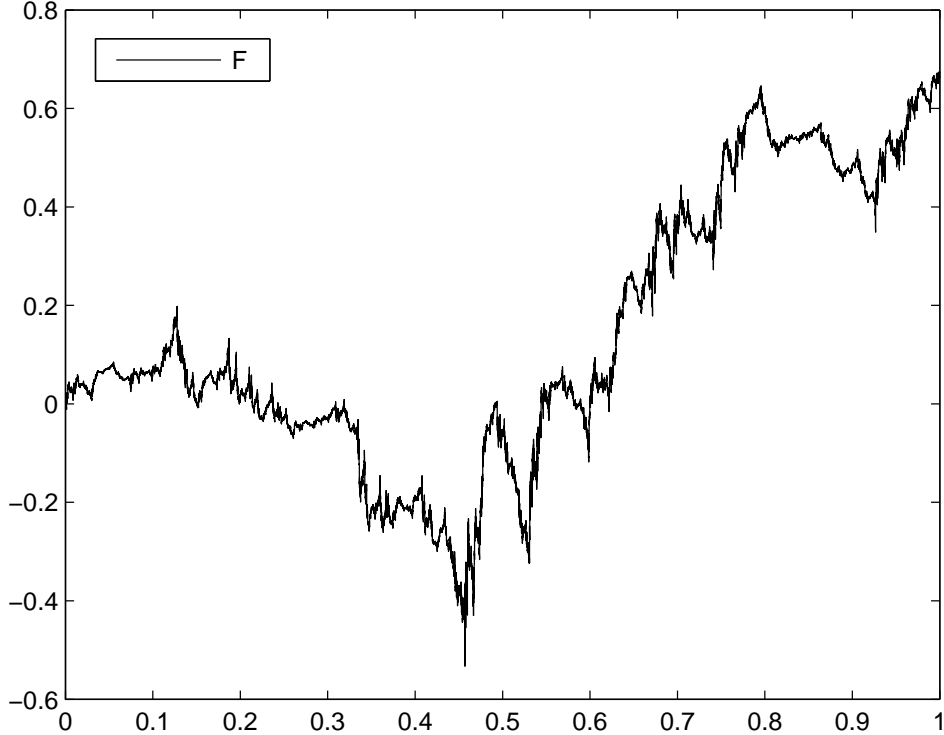} 
\end{center}
\caption{$F$ in the non-conservative case.}\label{Fpic}
\end{figure}

\begin{figure}
\begin{center}
\begin{minipage}[b]{.5\textwidth} 
\centering 
\includegraphics[width=\textwidth]{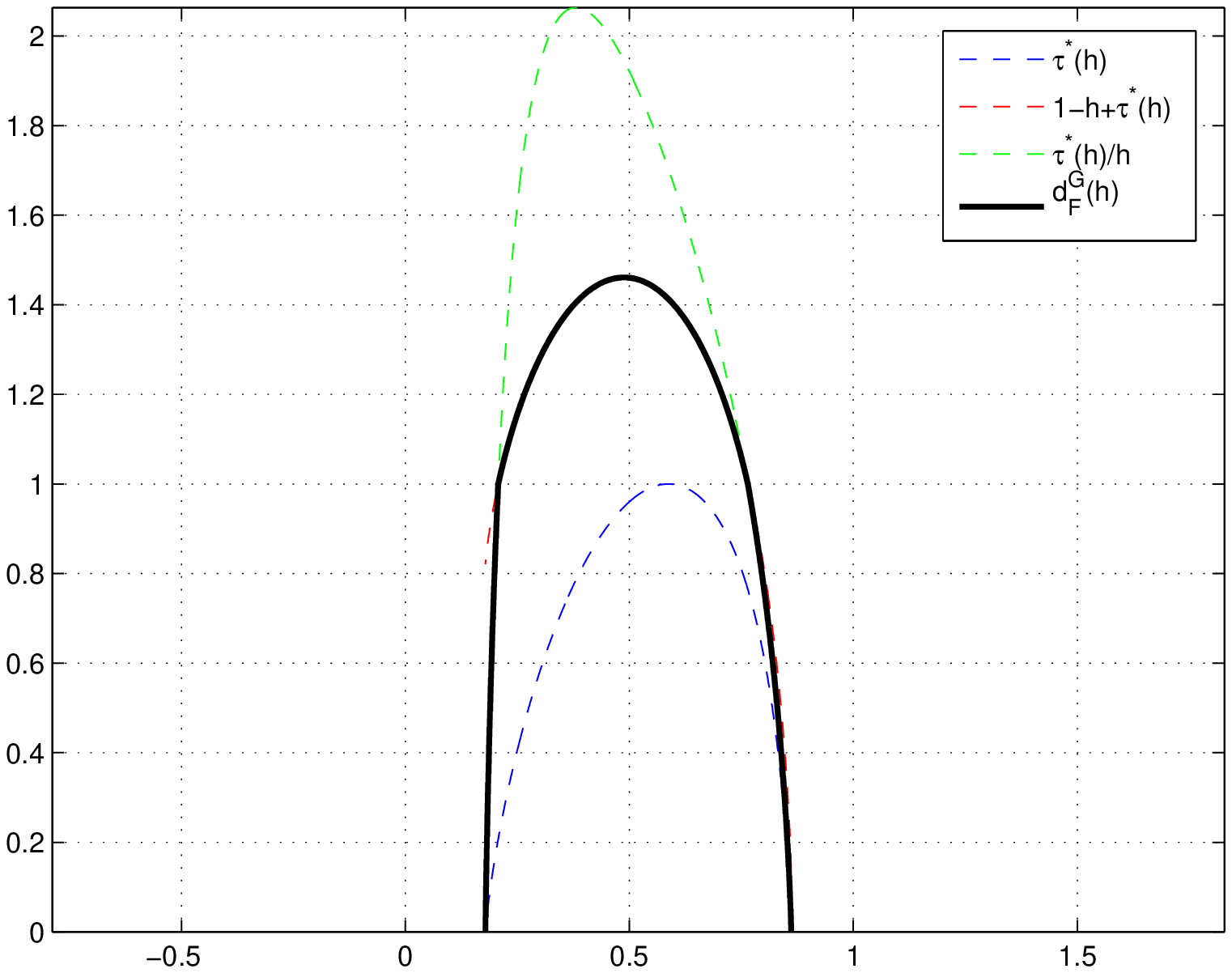} 
\par\vspace{0pt} 
\end{minipage}% 
\begin{minipage}[b]{.5\textwidth} 
\centering 
\includegraphics[width=\textwidth]{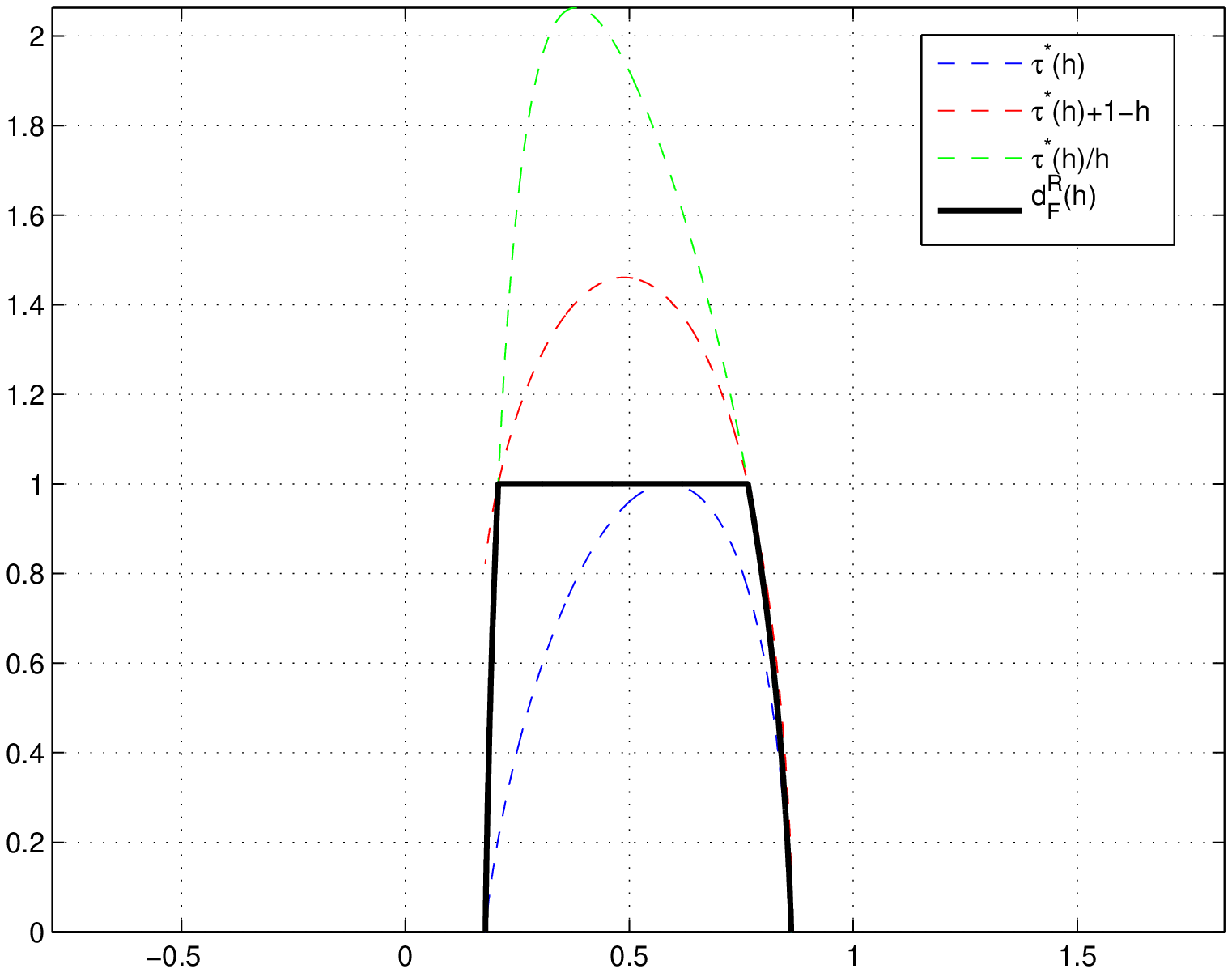}
\end{minipage}
\end{center}
\caption{$d_F^G$ (Left) and $d_F^R$ (Right) in case of $\sup J_F<1$.}
\end{figure}

\begin{figure}
\begin{center}
\begin{minipage}[b]{.5\textwidth} 
\centering 
\includegraphics[width=\textwidth]{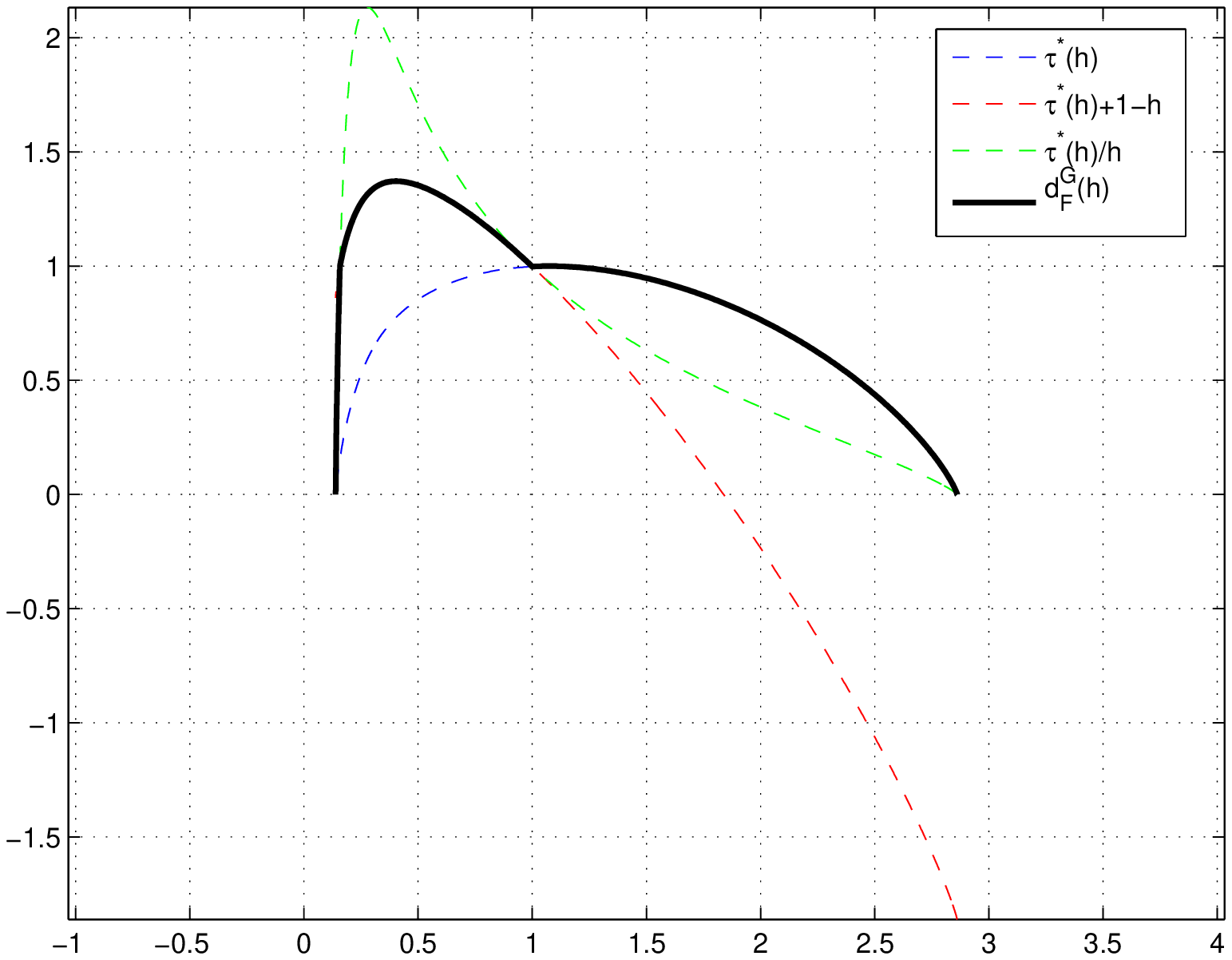} 
\par\vspace{0pt} 
\end{minipage}% 
\begin{minipage}[b]{.5\textwidth} 
\centering 
\includegraphics[width=\textwidth]{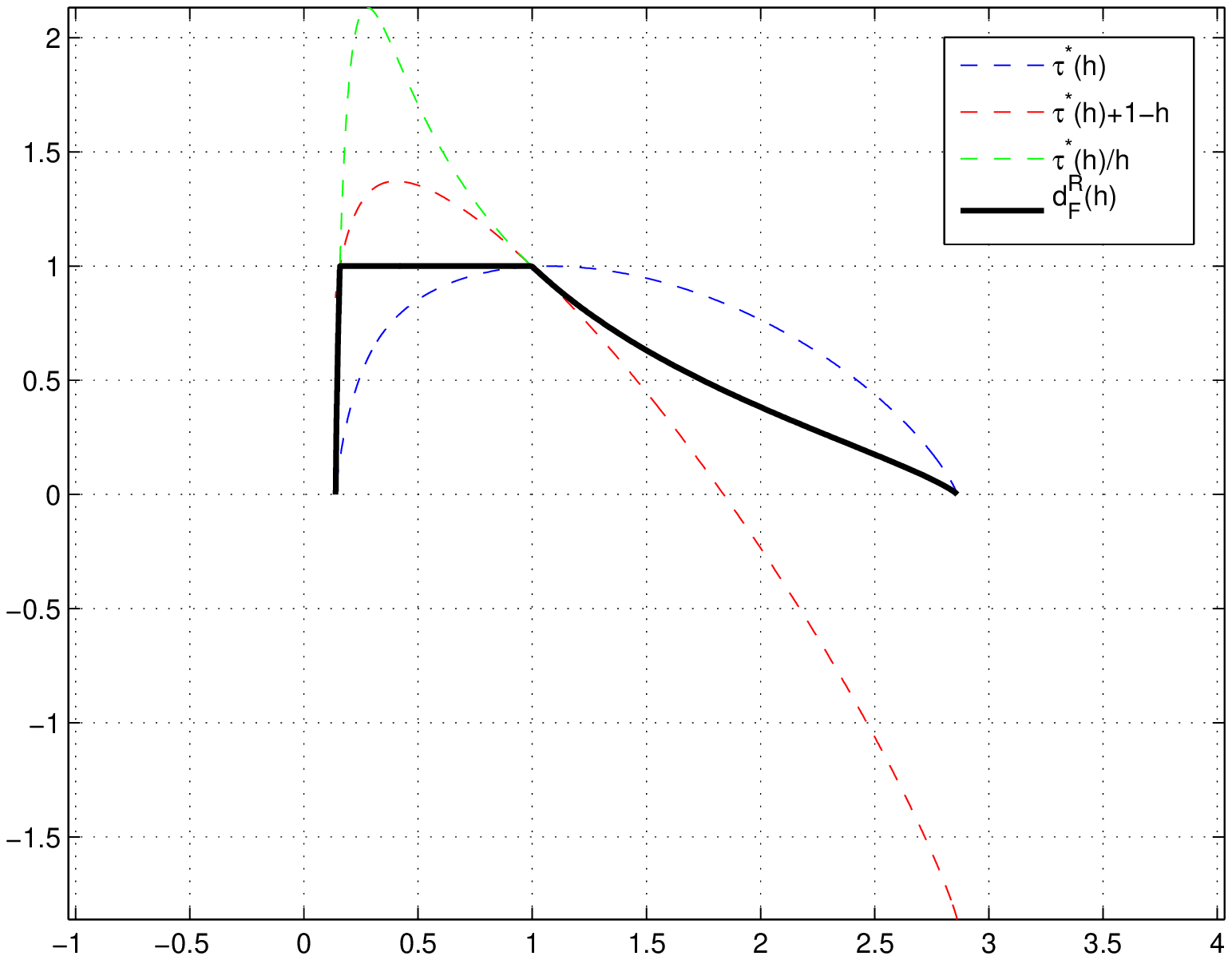} 
\end{minipage}
\end{center}
\caption{$d_F^G$ (Left) and $d_F^R$ (Right) in case of $\sup J_F>1$.}
\end{figure}

\begin{figure}
\begin{center}
\begin{minipage}[b]{.5\textwidth} 
\centering 
\includegraphics[width=\textwidth]{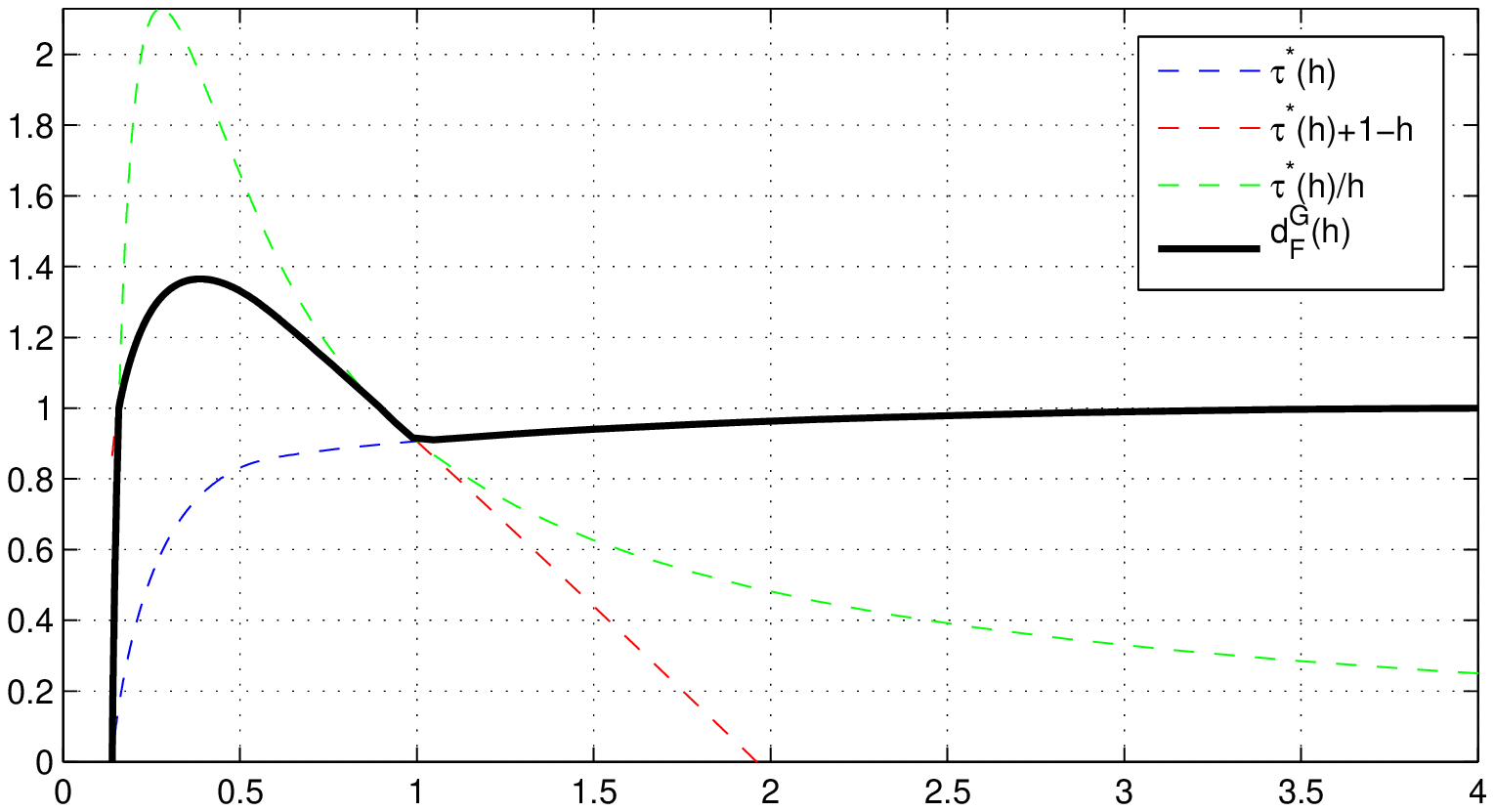} 
\par\vspace{0pt} 
\end{minipage}% 
\begin{minipage}[b]{.5\textwidth} 
\centering 
\includegraphics[width=\textwidth]{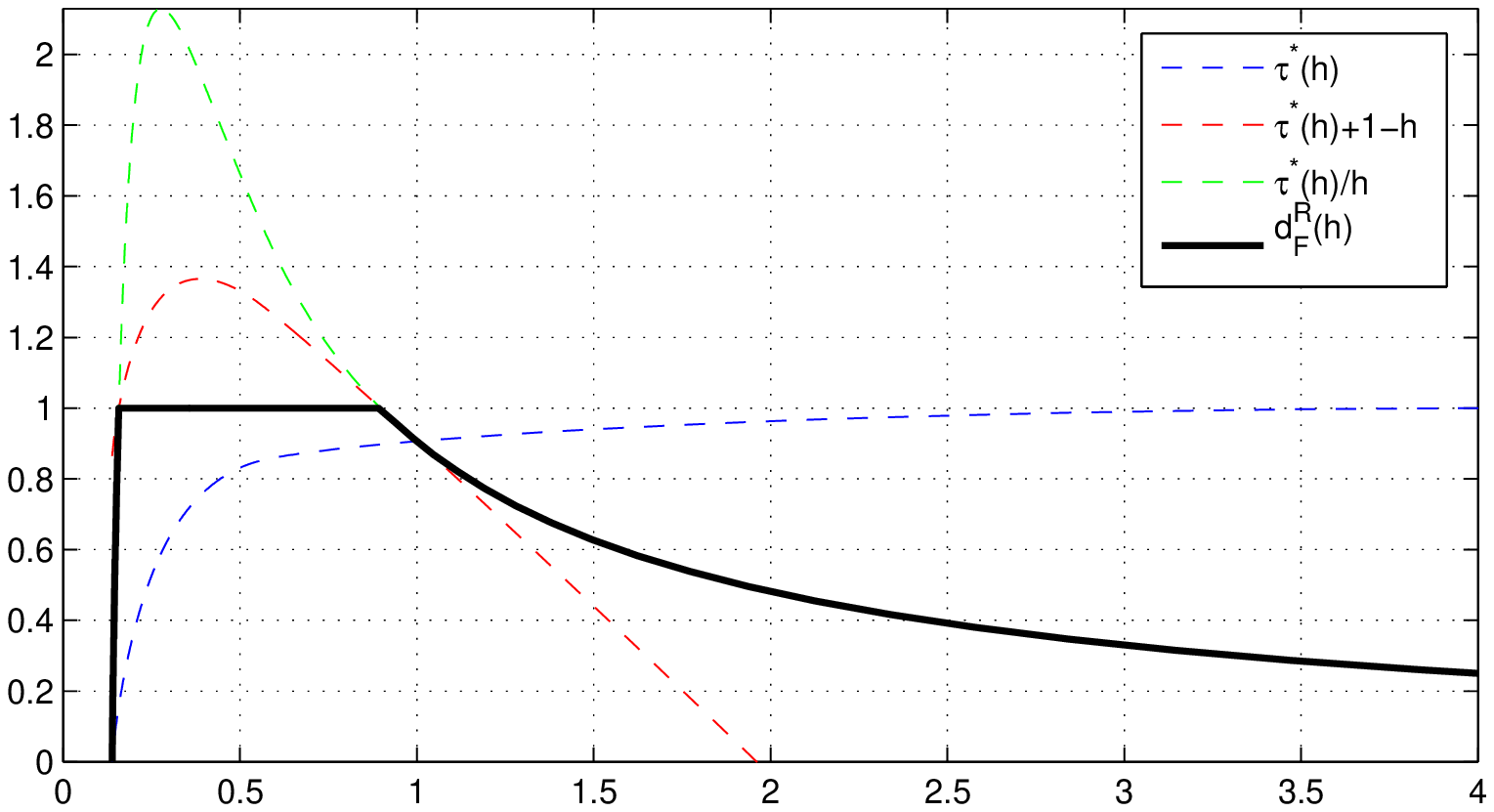} 
\end{minipage}
\end{center}
\caption{$d_F^G$ (Left) and $d_F^R$ (Right) in case of $\sup J_F=\infty$.}
\end{figure}

\begin{figure}
\begin{center} 
\begin{minipage}[b]{.33\textwidth} 
\centering 
\includegraphics[width=\textwidth]{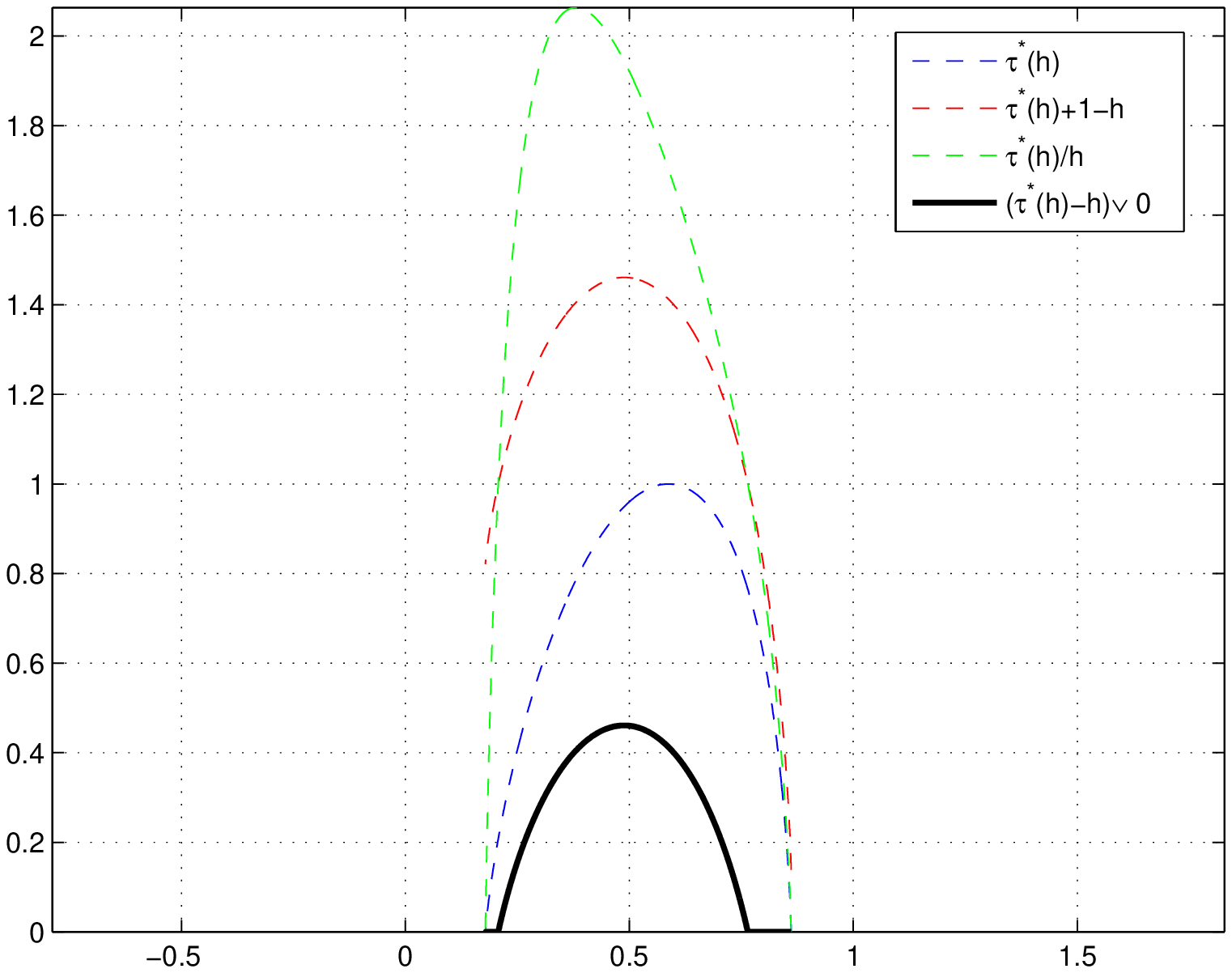} 
\par\vspace{0pt} 
\end{minipage}% 
\begin{minipage}[b]{.33\textwidth} 
\centering 
\includegraphics[width=\textwidth]{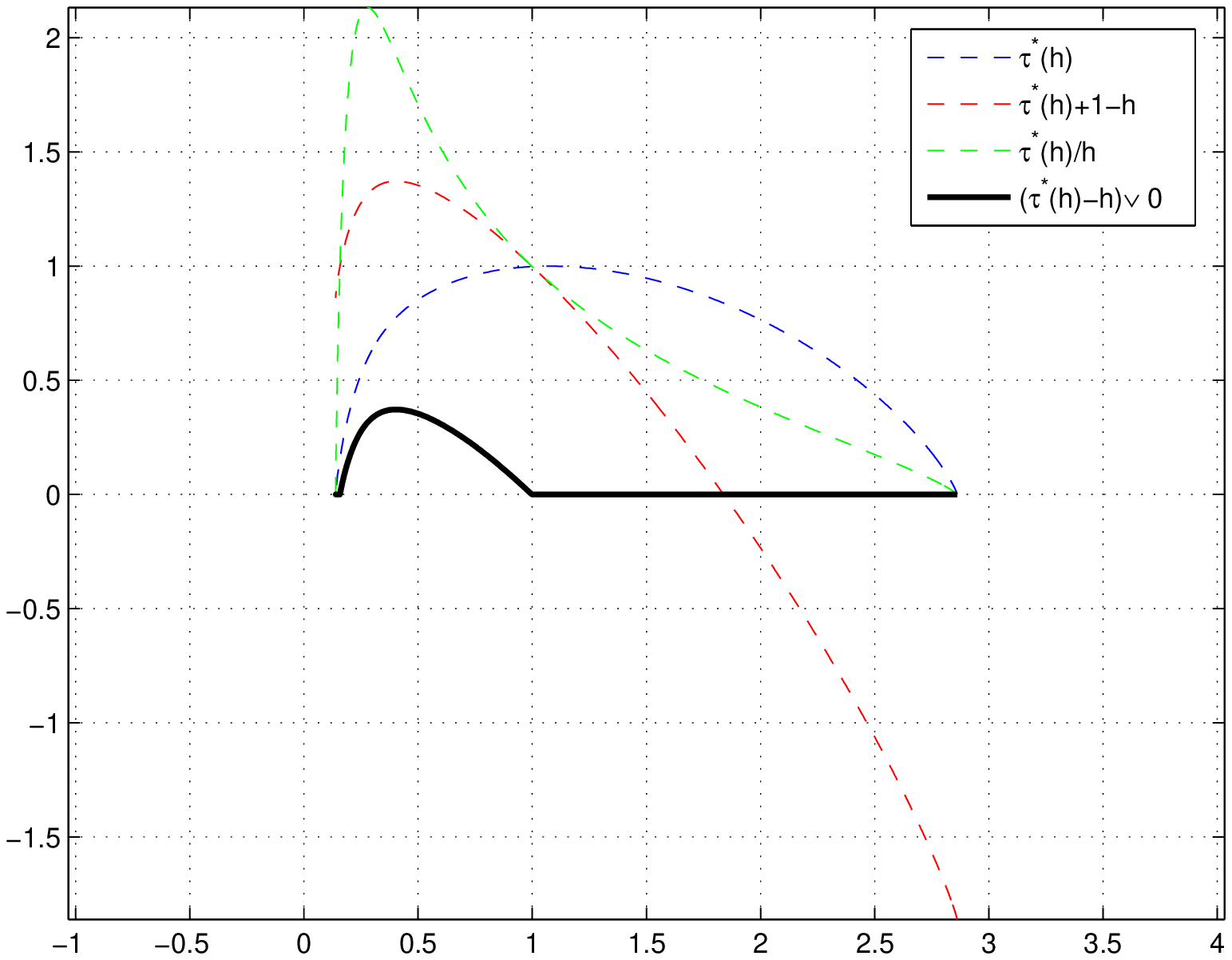} 
\par\vspace{0pt} 
\end{minipage}% 
\begin{minipage}[b]{.33\textwidth} 
\centering 
\includegraphics[width=\textwidth]{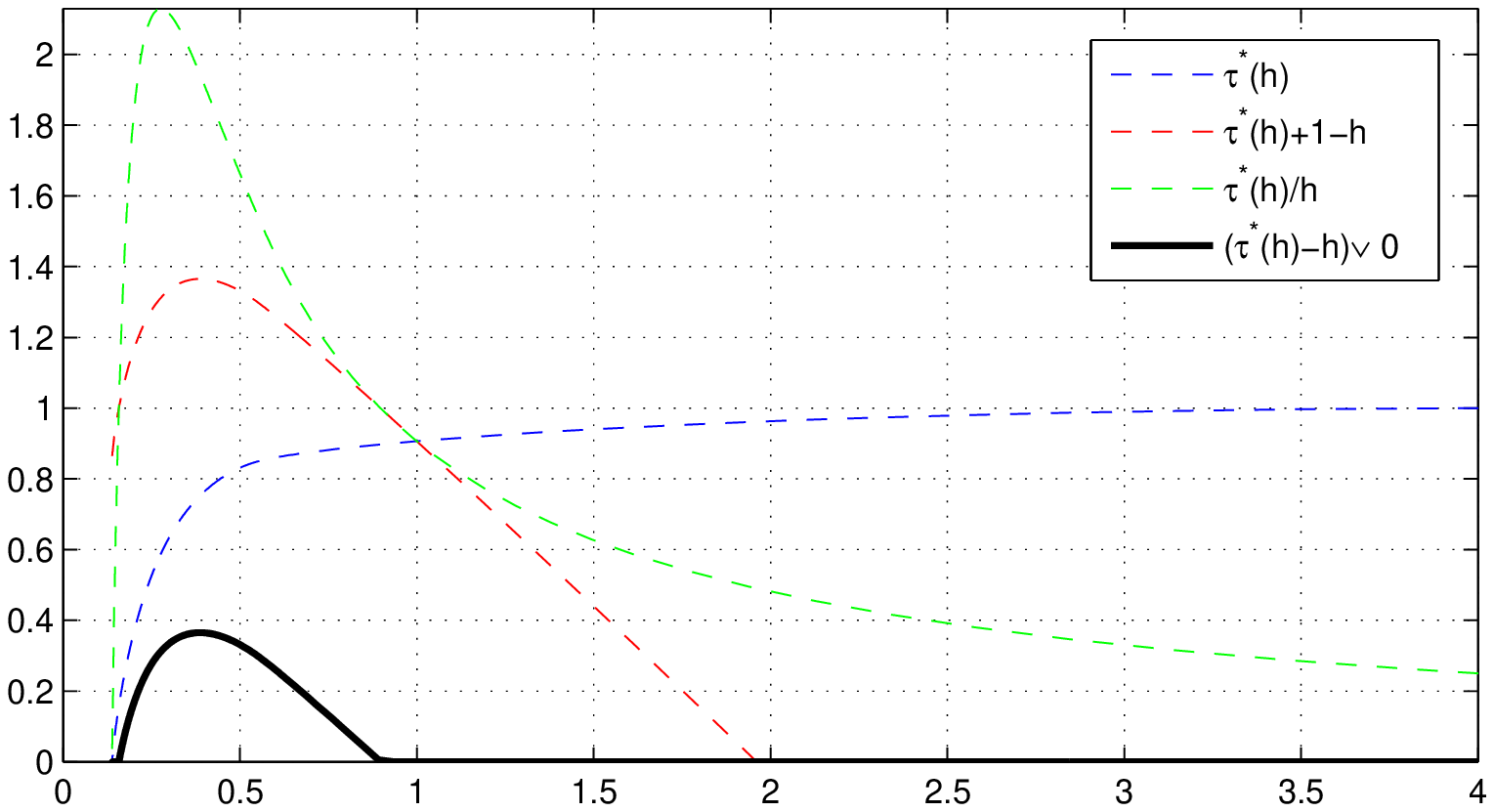} 
\par\vspace{0pt} 
\end{minipage}
\end{center}
\caption{Function $(\tau^*(h)-h)\vee 0$ in case of $\sup J_F < 1$ (Left), $\sup J_F > 1$ (Middle) and $\sup J_F=\infty$ (Right).}
\end{figure}

The rest of the paper is organized as follows: in Section \ref{badic} we introduce the $b$-adic independent cascade function and briefly present its multifractal analysis; in Section \ref{proofmainthm} we prove Theorem \ref{mainthm} with two intermediate results: Theorem \ref{lower} and Theorem \ref{levelsets}, whose proofs are postponed to Section \ref{third}; in Section \ref{second} we prove Theorem \ref{upper} and finally in Section \ref{forth} we prove Proposition \ref{mainprop}, which is our essential tool for proving Theorem \ref{lower} and Theorem \ref{levelsets}.

\section{$b$-adic independent cascade function}\label{badic}

\subsection{Coding space.}

Let $b\ge 2$ be an integer and $\mathscr{A}=\{0,\cdots,b-1\}$ be the alphabet. Let $\mathscr{A}^*=\bigcup_{n\ge 0} \mathscr{A}^n$ (by convention $\mathscr{A}^0=\{\varnothing\}$ the set of empty word) and $\mathscr{A}^{\mathbb{N}_+}=\{0,\dots,b-1\}^{\mathbb{N}_+}$.

Denote the length of $w$ by $|w|=n$ if $w\in\mathscr{A}^n$, $n\ge 0$ and $|w|=\infty$ if $w\in\mathscr{A}^{\mathbb{N}_+}$.

The word obtained by concatenation of $w\in\mathscr{A}^*$ and $t\in\mathscr{A}^*\cup\mathscr{A}^{\mathbb{N}_+}$ is denoted by $w\cdot t$ and sometimes $wt$.

For every $w\in \mathscr{A}^*$, the cylinder with root $w$, i.e. $\{w\cdot t: t\in \mathscr{A}^{\mathbb{N}_+}\}$ is denoted by $[w]$. The set $\mathscr{A}^{\mathbb{N}_+}$ is endowed with the standard metric distance
\[
d(s,t)=\inf\{b^{-n}: n\ge 0,\  \exists \  w\in\mathscr{A}^n,\ s,t\in [w]\}.
\] 

If $n\ge 1$ and $w=w_1\cdots w_n\in \mathscr{A}^n$ then for every $1\le k\le n$, the word $w_1\dots w_k$ is denoted by $w|_k$, and if $k=0$ then $w|_0$ stands for $\varnothing$. Also, for any infinite word $t=t_1 t_2\cdots \in\mathscr{A}^{\mathbb{N}_+}$ and $n\ge 1$, $t|_n$ denotes the word $t_1\cdots t_n$ and $t|_0$ the empty word.

Let
\[
\lambda: t\in \mathscr{A}^*\cup\mathscr{A}^{\mathbb{N}_+} \mapsto \sum_{k=1}^{|t|} t_k\cdot b^{-k}
\]
be the canonical projection from $\mathscr{A}^*\cup\mathscr{A}^{\mathbb{N}_+}$ onto $[0,1]$. For any $x\in[0,1]$ and $n\ge 1$, we define $x|_n=x_1\cdots x_n$ the unique element of $\mathscr{A}^n$ such that $\lambda(x|_n) \le x < \lambda(x|_n)+b^{-n}$ if $t<1$, as well as $1|_n=b-1\cdots b-1$. 

\subsection{$b$-adic independent cascade function}\label{badicF}

Let $(\Omega,\mathcal{A},\mathbb{P})$ be the probability space. Let
\begin{equation*}
W=(W_0,\cdots,W_{b-1}) :\Omega \mapsto \mathbb{R}^b \ \text{ and } \  L=(L_0,\cdots ,L_{b-1}) :\Omega \mapsto (0,1)^b
\end{equation*}
be two random vectors such that $\mathbb{E}(\sum_{j=0}^{b-1} W_j)=\mathbb{E}(\sum_{j=0}^{b-1} L_j)=1$. Let
$$\big\{(W,L)(w)=(W(w),L(w)): w\in \mathscr{A}^* \big\}$$
be a family of independent copies of $(W,L)$.

For any $w\in\mathscr{A}^*$, $u=u_1\cdots u_n\in\mathscr{A}^n$ and $n\ge 1$ we define the products:
\begin{eqnarray}
W_{u}(w)&=& W_{u_1}(w)\cdot W_{u_2}(w\cdot u_1)\cdots W_{u_n}(w\cdot u_1\cdots u_{n-1});\label{Wij}\\
L_{u}(w) &=& L_{u_1}(w)\cdot L_{u_2}(w\cdot u_1)\cdots L_{u_n}(w\cdot u_1\cdots u_{n-1}). \label{Lij}
\end{eqnarray}

For $w \in\mathscr{A}^*$, $n\ge 1$ we define two continuous functions
$$
F_{W,n}^{[w]}(t)=\int_0^t b^{n}\cdot W_{u|_n}(w)\ \mathrm{d} u\ \  \text{ and }\ \  F_{L,n}^{[w]}(t)=\int_0^t b^{n}\cdot L_{u|_n}(w)\ \mathrm{d} u,\ \ t\in[0,1].
$$

For $p\in\mathbb{R}$ and $U\in\{W,L\}$ let
\[
\varphi_U(p)=-\log_b\mathbb{E}(\sum_{j=0}^{b-1}\mathbf{1}_{\{U_j\neq0\}}|U_j|^p).
\]

\smallskip

From~\cite{KP,DL,BMJ} we know that if the following assumption holds:
\begin{itemize}
\item[(A1)] There exists $p>1$ such that $\varphi_{W}(p)>0$, with $p\in(1,2]$ if $\mathbb{P}(\sum_{j=0}^{b-1}W_j=1)<1$. Moreover, $\mathbb{E}(\sum_{j=0}^{b-1} L_j\log L_j)<0$,
\end{itemize}
then for $U\in\{W,L\}$ and for any $w\in\mathscr{A}^*$, $F_{U,n}^{[w]}$ converges uniformly, almost surely and in $L^p$ norm for $p>1$ such that $\varphi_U(p)>0$, as $n$ tends to $\infty$. Moreover, the limit of $F_{L,n}^{w}$ is almost surely increasing.

We denote by $F_W$ and $F_L$ the limits of $F_{W,n}^{[\varnothing]}$ and $F_{L,n}^{[\varnothing]}$. Then, the $b$-adic independent cascade function considered in this paper is
\begin{equation}\label{F}
F=F_W\circ F_L^{-1}: [0,F_L(1)]\mapsto \mathbb{R}.
\end{equation}
We say that we are in the conservative case if $\mathbb{P}(\sum_{j=0}^{b-1} W_j=1)=1$, and in the non-conservative case otherwise.

\begin{remark}
$\ $
\begin{itemize}
\item[(a)] If we set $L=(b^{-1},\cdots, b^{-1})$ and let the entries of $W$ only take positive values then $F$ becomes the indefinite integral of the multiplicative cascades measure $\mu_W$ constructed in~\cite{Mand,KP}. If $W$ is also deterministic, then $\mu_W$ is nothing but the multinomial measure on $[0,1]$ associated with the probability vector $W$.
\smallskip

\item[(b)] If $(W,L)$ is a deterministic pair, then we are in the conservative case and $F$ is the self-affine function studied by Bedford~\cite{Bed1} and Kono~\cite{Kono}, whose multifractal analysis is a consequence of the study of the digit frequency by Besicovitch~\cite{Bes} and Eggleston~\cite{Egg}. The graph and range singularity spectra in this case are still unknown. Even for the dimension of the whole graph, to our best knowledge, there are only results for the box-counting dimension~\cite{Bed1} and in some special cases the Hausdorff dimension~\cite{BeUr1990,FeWa,Ur1990b}. When $W$ is not deterministic but conservative, the situation is very close to that of the deterministic case from the question raised in this paper point of view. Our results will concern the non-conservative case only.
\end{itemize}
\end{remark}

\subsection{Multifractal analysis of $F$.}

The multifractal analysis of $F$ is based on the construction of an uncountable family of statistically self-similar measures $\mu_{q}$ defined on the coding space $\mathscr{A}^{\mathbb{N}_+}$ with desired Hausdorff dimension. More precisely, for $(q,t)\in\mathbb{R}^2$ we define
\begin{equation}\label{Phi}
\Phi(q,t)=\mathbb{E}(\sum_{j=0}^{b-1} \mathbf{1}_{\{W_j\neq 0\}}|W_j|^q\cdot L_j^{-t}).
\end{equation}
Clearly $\Phi(q,t)$ is analytic on the rectangle $\{(q,t)\in\mathbb{R}^2: |\Phi(q,t)|<\infty\}$. Since $L_j\in(0,1)$ for $0\le j\le b-1$, for each $q\in \widetilde{J}:=\{q\in\mathbb{R}: (q,t)\in\mathbb{R}^2, |\Phi(q,t)|<\infty\text{ for some } t\}$ there is a unique $\tau(q)$ such that $\Phi(q,\tau(q))=1$, and the function $\tau$ is easily seen to be concave and analytic over $\widetilde{J}$.

Define the interval $J=\{q\in \widetilde{J}: q\tau'(q)-\tau(q)> 0\}$.

For any $q \in J$ and $w\in\mathscr{A}^*$ define the random vector
$$W_{q}(w)=\left(W_{q, j}(w)= \mathbf{1}_{\{W_j(w)\neq 0\}}|W_j(w)|^q\cdot L_j(w)^{-\tau(q)}\right)_{0\le j\le b-1}.$$

Due to~\eqref{Wij} and~\eqref{Lij}, for $w\in\mathscr{A}^*$, $u\in \mathscr{A}^n$, $n\ge 1$ we can define the product
\begin{equation}\label{Wq}
W_{q,u}(w)= \mathbf{1}_{\{W_u(w)\neq 0\}} |W_{u}(w)|^q\cdot L_{u}(w)^{-\tau(q)}.
\end{equation}

For $q\in J$, $w\in\mathscr{A}^*$ and $n\ge 1$ define
$$Y_{q,n}(w)=\sum_{u \in\mathscr{A}^n} W_{q,u}(w).$$

For $q\in J$, let
\begin{equation}\label{xi(q)}
\xi(q)=-\frac{\partial}{\partial q}\Phi(q,\tau(q))\ \text{ and }\ \widetilde{\xi}(q)=\frac{\partial}{\partial t}\Phi(q,\tau(q)).
\end{equation}
By construction $\tau'(q)=\xi(q)/\widetilde{\xi}(q)$. We present our second assumption:
\begin{itemize}
\item[(A2)] There exists $q<0$ such that $\varphi_W(q)>-\infty$; $\varphi_L$ is finite on $\mathbb{R}$. Moreover, $\mathbb{P}( \sum_{j=0}^{b-1} \mathbf{1}_{\{W_j\neq 0\}} \ge 2 )=1$.
\end{itemize}

Recall that if $\mu$ is a positive Borel measure on a compact metric space, its lower Hausdorff dimensions is defined as ${\dim}_H(\mu)=\inf\{\dim_H E:\mu(E)>0\}$. From~\cite{BJ} we have 

\begin{proposition}\label{aumeas} Suppose that (A1) and (A2) hold.

\medskip

\begin{itemize}
\item[(a)]
With probability 1, for all $q\in J$ and $w\in\mathscr{A}^*$, the sequence $Y_{q,n}(w)$ converges to a positive limit $Y_q(w)$. For any $w\in\mathscr{A}^*\setminus\{\varnothing\}$, the function $J\ni q\mapsto Y_q(w)$ is a copy of $J\ni q\mapsto Y_q(\varnothing):=Y_q$.
\smallskip

\item[(b)]
For every compact subset $K$ of $J$ and $w\in\mathscr{A}^*$ define
\begin{equation}\label{Y_K}
Y_K(w)= \sup_{q\in K}Y_q(w).
\end{equation}
For any $w\in\mathscr{A}^*\setminus\{\varnothing\}$, $Y_K(w)$ is a copy of $Y_K(\varnothing):=Y_K$ and there exists $p_K>1$ such that $\mathbb{E}(Y_K^{p_K})<\infty$.
\smallskip

\item[(c)] With probability 1, for all $q\in J$, the function 
\begin{equation}\label{selfsimimeasure}
\mu_q([w])=W_{q,w}(\varnothing) \cdot Y_q(w),\ w\in\mathscr{A}^* 
\end{equation}
defines a Borel measure on $\mathscr{A}^{\mathbb{N}_+}$ with 
\begin{equation}\label{gammaq}
\dim_H (\mu_q)=\frac{\gamma(q)}{\log(b)}, \text{ where }\gamma(q)=q\xi(q)-\tau(q)\widetilde{\xi}(q).
\end{equation}
 \end{itemize}
 
\end{proposition}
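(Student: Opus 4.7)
The plan is to proceed in four steps, following the Kahane-Peyri\`ere and Barral-Jin framework for parameterized multiplicative cascade martingales.

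First, for each fixed $q \in J$ and $w \in \mathscr{A}^*$, I would observe that $Y_{q,n}(w)$ is a non-negative martingale with respect to $\mathcal{F}_n = \sigma((W,L)(v) : |v| \le n)$. Indeed, since $\Phi(q,\tau(q)) = 1$ we have $\mathbb{E}(\sum_{j=0}^{b-1} W_{q,j}) = 1$, and the branching recursion $Y_{q,n+1}(w) = \sum_{j=0}^{b-1} W_{q,j}(w)\, Y_{q,n}(wj)$ combined with independence gives the martingale property; it converges almost surely to some $Y_q(w) \ge 0$. Using the identity $\tau'(q) = \xi(q)/\widetilde\xi(q)$ and the positivity of $\widetilde\xi(q)$ (which follows from $L_j \in (0,1)$), the defining condition $q\tau'(q) - \tau(q) > 0$ of $J$ is equivalent to $\gamma(q) > 0$, i.e.\ the Kahane-Peyri\`ere non-degeneracy condition $-\mathbb{E}(\sum_j W_{q,j} \log W_{q,j}) > 0$. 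This ensures $Y_q(w) > 0$ almost surely and $\mathbb{E}(Y_q(w)) = 1$. The stated distributional equality between $q \mapsto Y_q(w)$ and $q \mapsto Y_q(\varnothing)$ follows from the fact that $Y_q(w)$ depends only on the subtree $\{(W,L)(wv) : v \in \mathscr{A}^*\}$, an independent copy of the original family.

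Second, to upgrade pointwise-in-$q$ convergence to simultaneous convergence and to obtain part (b), I would extend the construction to complex parameters. Since $\Phi(q,t)$ is analytic on the interior of its domain of finiteness, for every compact $K \subset J$ one can choose a complex neighbourhood $K^{\mathbb{C}}$ of $K$ on which $q \mapsto Y_{q,n}$ is holomorphic. Using (A1)--(A2) together with the strict inequality $\gamma(q) > 0$ on $K$, one can produce $p_K > 1$ and a uniform Biggins-type estimate
\[
\sup_{n \ge 1}\, \sup_{q \in K^{\mathbb{C}}} \mathbb{E}\bigl(|Y_{q,n}|^{p_K}\bigr) < \infty
\]
by verifying the appropriate scaling inequality for $\mathbb{E}(\sum_j |W_{q,j}|^{p_K})$ uniformly on $K^{\mathbb{C}}$. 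Combined with Cauchy's integral formula on small disks around each $q \in K$, this yields almost sure uniform convergence of $Y_{q,n}(w)$ on $K$ for each $w$ (and hence for all $w \in \mathscr{A}^*$ simultaneously, by countability), and delivers the moment bound for $Y_K = \sup_{q \in K} Y_q$ stated in (b).

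Third, once Step 2 is established, the recursion $Y_q(w) = \sum_{j=0}^{b-1} W_{q,j}(w) Y_q(wj)$ holds simultaneously for all $q \in J$ and $w \in \mathscr{A}^*$ on a full-probability event, so the cylinder assignment $\mu_q([w]) := W_{q,w}(\varnothing) Y_q(w)$ is finitely additive and extends uniquely to a Borel measure on the compact space $\mathscr{A}^{\mathbb{N}_+}$ by Carath\'eodory. For the dimension formula, I would introduce the Peyri\`ere measure $d\mathbb{Q}_q = d\mathbb{P} \otimes d\mu_q$ on $\Omega \times \mathscr{A}^{\mathbb{N}_+}$; a size-biasing computation shows that under $\mathbb{Q}_q$ the sequence $\{\log W_{q,t_k}(t|_{k-1})\}_{k \ge 1}$ is i.i.d.\ with mean $\mathbb{E}(\sum_j W_{q,j} \log W_{q,j}) = -\gamma(q)$. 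Combining the strong law of large numbers with the bound $\log Y_q(t|_n) = o(n)$ (from Borel-Cantelli applied to the $L^{p_K}$ estimate of Step 2), one obtains for $\mu_q$-almost every $t$
\[
\lim_{n \to \infty} \frac{\log \mu_q([t|_n])}{-n \log b} = \frac{\gamma(q)}{\log b},
\]
and a standard mass-distribution / Billingsley argument then gives $\dim_H \mu_q = \gamma(q)/\log b$.

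The main obstacle is Step 2: securing the uniform $L^{p_K}$ bound on a complex neighbourhood of $K$ and using analyticity to interchange ``for each $q$, almost surely'' and ``almost surely, for each $q$''. The remaining items (martingale convergence for fixed $q$, Carath\'eodory extension, Peyri\`ere-based dimension calculation) are standard once this uniformity is available.
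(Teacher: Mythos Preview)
The paper does not actually prove this proposition; it is quoted verbatim from \cite{BJ} with the phrase ``From~\cite{BJ} we have'', so there is no in-paper argument to compare against. Your outline is essentially the strategy carried out in \cite{BJ}: martingale convergence for fixed $q$, holomorphic extension in $q$ combined with uniform $L^p$ bounds and Cauchy's formula to obtain simultaneous convergence on compacta and the moment estimate for $Y_K$, and finally the Peyri\`ere size-biased measure plus the law of large numbers to compute the local dimension. Two small points you leave implicit: the almost-sure positivity of $Y_q$ (as opposed to positivity with positive probability) requires the non-extinction clause $\mathbb{P}(\sum_j \mathbf{1}_{\{W_j\neq 0\}}\ge 2)=1$ from (A2); and the passage from ``for each $q\in K$, a.s.'' to ``a.s., for all $q\in J$'' in part (c) (in particular for the dimension statement) needs an additional uniformity or continuity argument in $q$, which in \cite{BJ} again relies on the analytic-family machinery of Step~2.
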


These statistically self-similar measures are the effective tools to study the multifractal behavior of $F=F_W\circ F_L^{-1}$. In fact, with each $\mu_q$ we can induce a measure $\mu^{D}_q$ on the domain $[0,F_L(1)]$: for any Borel set $A\subset\mathbb{R}$,
\begin{equation*}
\mu_q^D(A)=\mu_q\Big(\big\{t \in \mathscr{A}^{\mathbb{N}_+}:F_L\circ \lambda(t)\in A \big\}\Big).
\end{equation*}
It is proved in~\cite{BJ} that, with probability 1, for all $q\in J$, the measure $\mu_q^{D}$ is carried by the set $E_F(\tau'(q))$ and for $\mu_q^{D}$-almost every $x\in E_F(\tau'(q))$, 
$$
h_{\mu_q^D}(x)=\liminf_{r\to0^+}\frac{\log \mu_q^{D}(B(x,r))}{\log r}=q\tau'(q)-\tau(q)=\tau^*(\tau'(q)).$$
Consequently, $\dim_H E_F(\tau'(q))\ge \dim_H(\mu_q^{D})=\tau^*(\tau'(q))$. This is used to obtain the following result in~\cite{BJ}.

\begin{proposition}\label{multiforma}
Suppose that (A1) and (A2) hold.
\medskip

\begin{itemize}
\item[(a)] With probability $1$, $\tau_F=\tau$ on the interval $J$. Moreover, if $\varphi_W$ is finite on $\mathbb{R}$, then if $\overline{q}=\sup J<\infty$ (resp. $\underline{q}:=\inf J>-\infty$) we have $\tau_F(q)=\tau'(\overline{q})q$ (resp. $\tau'(\underline{q})q$) over $[\overline{q},\infty)$ (resp. $(-\infty,\underline{q}]$).
\smallskip

\item[(b)] With probability $1$, $d_{F}=\tau_F^*=\tau^*$ on the interval $\tau'(J)$.
\end{itemize}
\end{proposition}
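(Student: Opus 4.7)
The plan for part (a) is to reduce both bounds on $\tau_F$ to the self-similar structure indexed by the $b$-adic tree. For $r\in(0,1)$ introduce the stopping-time antichain
$$
T_r = \bigl\{\, w=w_1\cdots w_n \in \mathscr{A}^*: L_w \le r < L_{w_1\cdots w_{n-1}}\bigr\},
$$
so that $\{F_L([w]) : w \in T_r\}$ tiles $[0,F_L(1)]$ with tile-lengths in $[r\cdot\min_j L_j,\, r]$. Statistical self-similarity of $F$ yields
$$
\mathrm{Osc}_F(F_L([w])) \;=\; |W_w(\varnothing)| \cdot Z_w,\qquad Z_w := \mathrm{Osc}_{F^{[w]}}([0,1]),
$$
where the $F^{[w]}$ are i.i.d.\ copies of $F$ independent of the $\sigma$-algebra generated by $\{(W,L)(v): |v|<|w|\}$. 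Using $|W_w|^q = L_w^{\tau(q)}\, W_{q,w}$, any partition-function sum over an $r$-packing is, after absorbing a bounded combinatorial overcount into the tiling $T_r$, comparable up to $r^{o(1)}$ to
$$
r^{\tau(q)}\sum_{w\in T_r} W_{q,w}\,Z_w^q.
$$

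For the lower bound $\tau_F(q)\ge\tau(q)$ on $J$, I bound the above display from above. The stopped martingale $\sum_{w\in T_r} W_{q,w}$ converges in $L^{p_K}$ for any compact $K\Subset J$ to $Y_q$ by Proposition~\ref{aumeas}(a)--(b), while the $(Z_w)_{w\in T_r}$ are i.i.d.\ and admit a moment of some order $p>1$ finite, thanks to assumption (A1) and the $L^p$-convergence of $F_{W,n}$. A Chebyshev/Borel--Cantelli argument along $r_n=b^{-n}$ then yields, almost surely, $\sup_{\{B_i\}}\sum_i\mathrm{Osc}_F(B_i)^q \le C(\omega,\varepsilon)\, r^{\tau(q)-\varepsilon}$ for all small $r$ and every $\varepsilon>0$. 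Uniformity in $q\in J$ is secured by the joint moment bound $\mathbb{E}(Y_K^{p_K})<\infty$ on compacts $K\Subset J$, together with the analyticity of $\tau$ and a countable exhaustion of $J$.

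For the reverse inequality $\tau_F(q)\le\tau(q)$ on $J$, I use the non-conservative hypothesis $\mathbb{P}(\sum_j W_j = 1)<1$: a Borel--Cantelli argument on the i.i.d.\ family $(Z_w)_{w\in\mathscr{A}^*}$ gives $\inf_{w\in T_r}Z_w \ge r^{o(1)}$ almost surely as $r\to 0$. Combined with the a.s.\ positivity of $Y_q$ (Proposition~\ref{aumeas}(a)), one obtains $\sum_{w\in T_r}\mathrm{Osc}_F(F_L([w]))^q \ge r^{\tau(q)+o(1)}$, which gives $\tau_F(q)\le\tau(q)$. The extrapolation $\tau_F(q)=\tau'(\bar q)q$ on $[\bar q,\infty)$ when $\varphi_W$ is finite on $\mathbb{R}$ is a tangency/freezing argument at the boundary of $J$: for $q>\bar q$ the partition function is dominated by cylinders whose ratio $\log|W_w|/\log L_w$ is close to $\tau'(\bar q)$, and the finiteness of $\varphi_W$ on $\mathbb{R}$ supplies the matching Cram\'er-type upper bound on the extreme oscillations; the argument at $\underline q$ is symmetric, and continuity at $\bar q$ is consistent because $\bar q\tau'(\bar q)-\tau(\bar q)=0$ on $\partial J$.

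Part (b) follows at once from (a). The general inequality $d_F(h)\le\tau_F^*(h)$ recalled before Corollary~\ref{upperspec}, combined with $\tau_F^*=\tau^*$ on $\tau'(J)$ from (a), gives the upper bound. The matching lower bound $d_F(\tau'(q))\ge\tau^*(\tau'(q))$ is delivered by the measure $\mu_q^D$ constructed in the paragraph preceding the statement: $\mu_q^D$ is carried by $E_F(\tau'(q))$ with $\dim_H\mu_q^D = \tau^*(\tau'(q))$, so the mass distribution principle concludes. The main obstacle throughout is obtaining all estimates on a single almost-sure event, valid for every $q\in J$ simultaneously; this is exactly what the uniform moment control of Proposition~\ref{aumeas}(b) together with the analyticity of $\tau$ and $\gamma$ on compact sub-intervals make possible, via a countable exhaustion.
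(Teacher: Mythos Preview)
The paper does not prove this proposition: it is quoted as a known result from \cite{BJ}, as the sentence immediately preceding the statement makes explicit (``This is used to obtain the following result in~\cite{BJ}''). There is therefore no in-paper argument to compare your sketch against; the proposition plays the role of an imported black box here.

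That said, your outline has a genuine gap. For the inequality $\tau_F(q)\le\tau(q)$ you explicitly invoke the non-conservative hypothesis $\mathbb{P}(\sum_j W_j=1)<1$, but that hypothesis belongs to (A3), whereas the proposition is stated under (A1)--(A2) only and must cover the conservative case as well. Moreover, non-conservativity is neither sufficient nor necessary for the step you attribute to it: the Borel--Cantelli claim $\inf_{w\in T_r}Z_w\ge r^{o(1)}$ requires a quantitative left-tail bound $\mathbb{P}(Z<\varepsilon)$ decaying faster than any power, and knowing merely that $Z$ is non-degenerate gives nothing of the sort. What actually drives this bound in \cite{BJ} is the negative-moment information built into (A2) --- the existence of $q<0$ with $\varphi_W(q)>-\infty$, together with the branching condition $\mathbb{P}(\sum_j\mathbf{1}_{\{W_j\neq0\}}\ge2)=1$ --- which yields the needed control on small oscillations. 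Your treatment of the linear extrapolation $\tau_F(q)=\tau'(\bar q)\,q$ on $[\bar q,\infty)$ is also too schematic: a full argument needs a large-deviations upper bound (this is where finiteness of $\varphi_W$ on all of $\mathbb{R}$ enters) paired with a matching lower bound coming from the boundary measures $\mu_{\bar q}$, and neither is spelled out. Part (b) is fine as written, since the upper bound $d_F\le\tau_F^*$ and the lower bound via $\mu_q^D$ are exactly the ingredients recalled in the paragraph preceding the proposition.
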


\section{Proof of Theorem \ref{mainthm}.}\label{proofmainthm}

\subsection{Proof of Theorem \ref{mainthm}(a).}

Since using $\mu_q$ is successful to describe the singularity spectrum of $F$, it is also worth trying to use these measures to study the graph and range singularity spectra of $F$.

\smallskip

For each $q\in J$, like $\mu^D_q$, associated with $F_L$ and $F_W$ we can induce:
\begin{itemize}
\item a measure $\mu^G_q$ carried by the graph: for any Borel set $A\subset\mathbb{R}^2$,
\begin{equation*}
\mu_q^G(A)=\mu_q\Big(\big\{t\in\mathscr{A}^{\mathbb{N}_+}: \big(F_L\circ\lambda(t),F_W\circ\lambda(t)\big)\in A \big\}\Big);
\end{equation*}
\item a measure $\mu^R_q$ carried by the range: for any Borel set $A\subset\mathbb{R}$,
\begin{equation*}
\mu^R_q(A)=\mu_q\Big(\big\{t\in\mathscr{A}^{\mathbb{N}_+}:F_W\circ(t)\in A \big\}\Big).
\end{equation*}
\end{itemize}

We focus on the lower Hausdorff dimension of the measures $\mu^G_q$ and $\mu^R_q$. We show that these dimensions provide the graph and range singularity spectra. Our approach is based on the estimation of the energy of these two measures restricted on suitable random sets (see Remark~\ref{sharpbd}). Before presenting the results, let us introduce our third assumption:

\begin{itemize}
\item[(A3)] $\mathbb{P}(\sum_{j=0}^{b-1}W_j=1)<1$ , $\mathbb{P}(\forall\  j, |W_j|>0)=1$, and $\varphi_W$ is finite on $\mathbb{R}$.
\end{itemize}

\begin{remark}\label{A3}
The first condition in (A3) ensures that $F$ has enough randomness: We have to avoid the conservative case where $\mathbb{P}(\sum_{j=0}^{b-1} W_j=1)=1$. The second condition $\mathbb{P}(\forall\  j, |W_j|>0)=1$ implies that almost surely $F$ is nowhere locally constant. The last condition ensures that the probability distribution of $F_W(1)$ has a bounded density, in fact this condition can be weaken to the existence of a real number $q<-1$ such that $\varphi_W(q)>-\infty$. The existence of the bounded density of $F_W(1)$ is a key property in the proof.
\end{remark}

As an essential intermediate result, we have the following theorem:

\begin{theorem}\label{lower}
Suppose that (A1)-(A3) hold. With probability $1$, for all $q \in J$ we have $\dim_H (\mu_q^G)=\gamma^G(q)$ and $\dim_H (\mu_q^R)=\gamma^{R}(q)$, where
\begin{eqnarray}
\gamma^G(q) &=& \left( \frac{\tau^*(\tau'(q))}{\tau'(q)} \wedge \big(\tau^*(\tau'(q))+1-\tau'(q)\big)\right) \vee \tau^*(\tau'(q)), \label{gammaG} \\
\gamma^{R}(q) &=& \frac{\tau^*(\tau'(q))}{\tau'(q)} \wedge 1. \label{gammaR}
\end{eqnarray}
\end{theorem}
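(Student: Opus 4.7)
The plan is to establish matching upper and lower bounds for both $\dim_H \mu_q^G$ and $\dim_H \mu_q^R$, and to do so uniformly in $q\in J$.

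For the upper bound, I would invoke the material already developed. By the discussion preceding Proposition~\ref{multiforma}, $\mu_q^D$ is carried by $E_F(\tau'(q))$ with $\dim_H \mu_q^D=\tau^*(\tau'(q))$. Pushing $\mu_q^D$ forward through $x\mapsto(x,F(x))$ and $x\mapsto F(x)$, the measures $\mu_q^G$ and $\mu_q^R$ are carried respectively by $G_F(E_F(\tau'(q)))$ and $R_F(E_F(\tau'(q)))$. Since $\dim_H \nu\le \dim_H E$ whenever $\nu$ is carried by $E$, combining Proposition~\ref{multiforma}(b) with Corollary~\ref{upperspec} applied with $h=\tau'(q)$ gives $\dim_H \mu_q^G\le \gamma^G(q)$ and $\dim_H \mu_q^R\le \gamma^R(q)$.

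For the lower bound I would use the classical energy criterion together with the self-similar structure of the cascade. On any cylinder $[w]\subset \mathscr{A}^{\mathbb{N}_+}$ of positive $\mu_q$-mass, the lower local dimension of $\mu_q^G$ at a point lying in the interior of the image of $[w]$ coincides with that of $\mu_q^G|_{[w]}$, so by Frostman's lemma finiteness of the $s$-energy
\[
I_s(\mu_q^G|_{[w]})=\int\!\!\int |x-y|^{-s}\,d\mu_q^G(x)\,d\mu_q^G(y)
\]
forces this local dimension to be at least $s$ on a set of positive $\mu_q^G$-measure, and a countable covering by cylinders promotes this to an a.e.\ statement, hence to $\dim_H \mu_q^G\ge s$. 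The problem is thus reduced to bounding $\mathbb{E}\,I_s(\mu_q^G|_{[w]})$ and $\mathbb{E}\,I_s(\mu_q^R|_{[w]})$ for $s$ approaching $\gamma^G(q)$ and $\gamma^R(q)$ from below, uniformly in $q$ on compact sub-intervals of $J$. This is precisely the role of Proposition~\ref{mainprop}: after fixing a compact $K\subset J$, invoking the proposition together with Fubini and countably many rational $s$ produces a single almost-sure event on which the required energies are finite for all $q\in K$, and exhausting $J$ by an increasing sequence of such compacts delivers the simultaneous statement.

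The expected-energy estimate itself decomposes the double integral according to the length $n$ of the common prefix $s\wedge t$. The branching property of the cascade factorises $F_W(\lambda(s))-F_W(\lambda(t))$ and $F_L(\lambda(s))-F_L(\lambda(t))$ as the cascade weights $W_{s\wedge t}$ and $L_{s\wedge t}$ multiplied by independent increments over complementary subtrees. Assumption (A3) ensures that $F_W(1)$ admits a bounded density on $\mathbb{R}$, which yields $\mathbb{E}[|F_W(1)-F_W(0)|^{-\alpha}]<\infty$ for every $\alpha<1$; combined with the reciprocal moment control on $Y_K$ from Proposition~\ref{aumeas}(b), the sum over $n$ is dominated by a geometric series whose common ratio equals one exactly at the critical exponent $\gamma^R(q)$ in the range case, and at the corresponding two-variable analogue for the graph.

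The main obstacle I anticipate is piecing together the different regimes hidden behind the $\wedge$ and $\vee$ in $\gamma^G(q)$. When $\tau'(q)>1$ the $F_L$-increment dominates the two-dimensional distance, reducing the estimate to a one-dimensional analysis along $E_F(\tau'(q))$ and producing the branch $\tau^*(\tau'(q))+1-\tau'(q)$; when $\tau'(q)<1$ the $F_W$-increment dominates and the bounded-density input of (A3) must be pushed to its sharpest form, producing the branch $\tau^*(\tau'(q))/\tau'(q)$. The additional $\vee\,\tau^*(\tau'(q))$ is automatic because the $x$-projection of $G_F(E_F(\tau'(q)))$ carries $\mu_q^D$ of lower Hausdorff dimension $\tau^*(\tau'(q))$, so the projection-cannot-increase-dimension principle forces $\dim_H \mu_q^G\ge \tau^*(\tau'(q))$. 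Making the regime decomposition uniform in $q$ and packaging all three cases in a single expected-energy bound is exactly what Proposition~\ref{mainprop} is designed to deliver.
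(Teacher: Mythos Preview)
Your upper-bound argument is essentially the paper's: push $\mu_q^D$ forward and apply Theorem~\ref{upper} with $\dim_H\mu_q^D=\tau^*(\tau'(q))$. Fine.

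The lower-bound sketch, however, has a genuine gap. You propose to bound $\mathbb{E}\,I_s(\mu_q^G|_{[w]})$ by decomposing over the common-prefix length of $(s,t)$ and using the branching factorisation of the increments. Restricting to a cylinder $[w]$ changes nothing (by statistical self-similarity it reduces to the full energy), and the paper's Remark~\ref{sharpbd} computes exactly what this naive expected energy yields: finiteness only for $\gamma<1+\tau(2q-1)-2\tau(q)$, which is \emph{strictly} below $\gamma^G(q)$ for every $q\neq 1$. The reason is that the expectation of $|W_{s\wedge t}|^{-a}$ against $\mu_q\otimes\mu_q$ produces $\Phi(2q-a,\cdot)$ rather than the linearised quantity $\gamma(q)$ that governs the true dimension; your ``bounded density plus reciprocal moment'' step cannot close this gap.

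What is missing is the restriction of $\mu_q$ to the random Cantor-like sets $\mathcal{C}_n(q,\epsilon)$ of Section~\ref{cantor}, on which the products $W_{t|_n}$, $L_{t|_n}$ and the oscillations $O_U(t|_n)$ are pinned near their deterministic values $e^{-n\xi(q)}$, $e^{-n\widetilde\xi(q)}$, $1$. Only after this restriction do the kernel and the measure decorrelate well enough for the expected energy to be finite up to $\gamma^G(q)$; this is precisely the content of Proposition~\ref{mainprop}, and it does not apply to the unrestricted integral you wrote down. A secondary point: your regime description is inverted. When $\tau'(q)>1$ one is in $J_3$, the $F_L$-increment dominates, and the graph dimension is $\tau^*(\tau'(q))$ (obtained trivially by projecting onto the domain, as you note later); the branch $\tau^*(\tau'(q))+1-\tau'(q)$ arises on $J_1$, where $\tau'(q)<1$ and $\gamma^G(q)>1$.
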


\begin{remark}\label{LedYo}
Notice that for $q\in J$ we have $\dim_H(\mu^D_q)=\tau^*(\tau'(q))$, and we can write
$$
\gamma^G(q)=\tau^*(\tau'(q))+\gamma^R(q)\cdot(1-\tau'(q))\vee 0.
$$
So Theorem~\ref{lower} actually provides us with a Ledrappier-Young like formula~\cite{LeYo} for the uncountable family of statistically self-similar measure $(\mu_q)_{q\in J}$ uniformly: with probability $1$, for all $q\in J$,
$$
\dim_H (\mu_q^G) =\dim_H (\mu_{q}^D)+\dim_H(\mu_{q}^R)\cdot (1-\tau'(q)) \vee 0.
$$
Similar formula also appear in Theorem 12 in~\cite{BeUr1990}, Theorem 3 in~\cite{FeWa} and Corollary 5.2 in~\cite{Baran1} in the study of the Hausdorff dimension of self-affine measures and sets.
\end{remark}

\begin{remark}\label{sharpbd}
It is worth noting that to prove Theorem~\ref{lower} we are forced to calculate the energy of $\mu_q$ restricted to suitable Cantor-like random sets (see Section~\ref{cantor}). If we did not use this restriction, for example for the measure $\mu_q^G$ in the case where its dimension is greater than $1$, we would have to estimate the expectation of
$$
\iint_{s,t\in \mathscr{A}^{\mathbb{N}_+}} \frac{\mathrm{d}\mu_q(s)\mathrm{d}\mu_q(t)}{(|F_L\circ\lambda(s)-F_L\circ\lambda(t)|^2+|F_W\circ\lambda(s)-F_W\circ\lambda(t)|^2)^{\gamma/2}},\ \gamma>1, 
$$
which turns out to be finite only if $\Phi(2q-1,\gamma-1+2\tau(q))<1$, which is equivalent to saying that $\gamma<1+\tau(2q-1)-2\tau(q)$. So the best lower bound we would get is:
\begin{equation}\label{sharp}
\dim_H (\mu^G_q) \ge 1+\tau(2q-1)-2\tau(q).
\end{equation}
Comparing this value with the exact dimension $1+(q-1)\tau'(q)-\tau(q)$, we find that~(\ref{sharp}) always provides a strick lower bound unless $q=1$. Thus, such an approach only provides the Hausdorff dimension of the whole graph.
\end{remark}

Since $\mu_q^{D}$ is carried by the set $E_F(\tau'(q))$, by definition the measure $\mu_q^{S}$ is carried by the set $S_F(E_F(\tau'(q)))$ for $S\in\{G,R\}$. Then combining the results in Proposition \ref{aumeas}, Proposition \ref{multiforma} and Theorem \ref{lower}, we prove the results on the singularity spectra part of Theorem \ref{mainthm}(a).

\medskip

For the result on the dimension of the whole graph, let $I=[0,F_L(1)]$. For each $n\ge 1$ we divide $I$ into $b^n$ semi-open to the right intervals of the same length denoted by $I_{n,k}$, for $k=1,\cdots, b^{n}$.

Recall that $\mathrm{Osc}_F(I_{n,k})=\sup_{x,y\in I_{n,k}} |F(x)-F(y)|$, so for each interval $I_{n,k}$ we will need at most $[\frac{\mathrm{Osc}_F(I_{n,k})}{|I_{n,k}|}]+1$ many squares whose side length is $|I_{n,k}|$ to cover $G_F(I_{n,k})$. Then, by definition of the upper box-counting dimension and the definition of $\tau_F$ in~\eqref{tauq} we get
$$
\overline{\dim}_B G_F\le \limsup_{n\to\infty}\frac{\log \sum_{j=1}^{b^n} ([\frac{\mathrm{Osc}_F(I_{n,k})}{|I_{n,k}|}]+1)}{ -\log (b^{-n}\cdot F_L(1))}\le 1+(-\tau_F(1))\vee 0.
$$
From Proposition~\ref{multiforma} we know that almost surely $\tau_F(1)=\tau(1)\le 0$ and applying Theorem \ref{lower} to $q=1$ we get with probability 1,
$$\dim_H  G_F\ge \dim_H G_F(\tau_F'(1)) =1-\tau_F(1).$$
Consequently, with probability 1,
$$\dim_H  G_F=\dim_P  G_F=\dim_B  G_F=1-\tau_F(1).$$

\begin{remark}
If we consider the exponent $\tilde{h}_F(x)$ defined in~\eqref{limex} and consider the following smaller iso-H\"older sets
$$
\widetilde{E}_f(h):=\{x\in I: \tilde{h}_f(x)=h\},
$$
then we claim that the results in Theorem~\ref{mainthm}(a) also works for the packing dimension if we replace the mono-H\"older set $E_f(h)$ by the set $\widetilde{E}_f(h)$, since in this case we have $\dim_P \widetilde{E}_f(h) \le \tau_f^*(h)$ for $h> 0$ and, moreover, under the assumption $\mathbb{P}(\forall j, |W_j|>0)=1$ in (A3), the measure $\mu_q^{D}$ is actually carried by the set $\widetilde{E}_F(\tau'(q))$ for $q\in J$ (see~\cite{BJ}).
\end{remark}

\subsection{Proof of Theorem \ref{mainthm}(b).}

To get measures on the level sets, in the same spirit as when one constructs the local times of certain stochastic processes, we could disintegrate the measures $\mu_q^G$ with respect to $\mu_q^R$ in order to obtain Radon measures $\mu_q^y$ carried by $L^y_F$ for $\mu_q^R$-almost every $y$, but such a disintegration turns out to be difficult to study. The reason is that the energy method we use does not provide the exact gauge function needed to describe the density of the measure $\mu_q^{R}$ with respect to Lebesgue. It only yields the lower Hausdorff dimension of $\mu_q^{R}$. However, inspired by what is done in~\cite{Orey1971} to calculate the Hausdorff dimension of the level sets of Gaussian process by using classical Martrand theorem, and in~\cite{Mat} to deal with the the Hausdorff dimension of slices of sets, it is possible to solve this problem for for Lebesgue almost every direction.

\medskip

For $h> 0$ and $\theta\in(-\pi/2,\pi/2)$, recall that $R_{F,\theta}(h)=\mathrm{Proj}_\theta(G_F(h))$, and for each $y\in R_{F,\theta}(h)$ recall that $L^y_{F,\theta}(h)=G_F(h)\cap l_{y,\theta}^{\perp}$.

For $q\in J$, let $\mu_{q,\theta}^{R}$ be the orthogonal projection of the measure $\mu_q^{G}$ onto $l_{\theta}$: $\mu_{q,\theta}^{R}(A) = \mu_q^{G}\circ \mathrm{Proj}_\theta^{-1} (A)$ for any Borel set $A\subset l_{\theta}$. Since $\mu_q^G$ is carried by $G_F(\tau'(q))$, so $\mu_{q,\theta}^{R}$ is carried by $R_{F,\theta}(\tau'(q))$.

\medskip

We have the following theorem:

\begin{theorem}\label{levelsets}
Suppose that (A1)-(A3) hold. With probability 1, for Lebesgue almost every $\theta\in(-\pi/2,\pi/2)$, for all $q\in J$ such that $\dim_H(\mu_q^G)=\gamma^{G}(q)>1$:

\medskip

\begin{itemize}
\item[(a)] The projected measure $\mu_{q,\theta}^{R}$ is absolutely continuous with respect to the one-dimensional Lebesgue measure on the $l_{\theta}$.
\smallskip

\item[(b)] For $\mu_{q,\theta}^{R}$-almost every $y\in l_{\theta}$, the following limit:
\begin{equation*}
\lim_{r\to 0^+}\frac{1}{r} \int_{x\in\mathbb{R}^2, |x-l_{y,\theta}^{\perp} |\le r} \psi(x) \ \mathrm{d} \mu_q^{G}(x)
\end{equation*}
exists for any continuous function $\psi:\mathbb{R}^2\mapsto \mathbb{R}_+$, so it defines a measure $\mu^{y}_{q,\theta}$ carried by $L^y_{F,\theta}(\tau'(q))$. 
\smallskip

\item[(c)] There exists a random set $R_{q,\theta}\subset R_{F,\theta}(\tau'(q))$ of full $\mu_{q,\theta}^{R}$-measure such that for any $y\in R_{q,\theta}$, the measure $\mu^{y}_{q,\theta}$ has lower Hausdorff dimension
$$\dim_H (\mu^{y}_{q,\theta})= \dim_H (\mu^{G}_{q})-1=\tau^*(\tau'(q))-\tau'(q).$$
\end{itemize}
\end{theorem}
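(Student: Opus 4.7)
My plan is to combine Marstrand--Mattila-style projection and slicing with the self-similar structure of $\mu_q^G$, leveraging the energy estimates behind Theorem~\ref{lower} and the uniform moment bound on $Y_K = \sup_{q \in K} Y_q$ from Proposition~\ref{aumeas}(b). For part (a), fix a compact $K \subset \{q \in J : \gamma^G(q) > 1\}$. The energy analysis underlying the proof of Theorem~\ref{lower} shows that for each $q \in K$ and each $s \in (1, \gamma^G(q))$, the $s$-energy $I_s(\mu_q^G) = \iint |x - y|^{-s}\, d\mu_q^G(x)\, d\mu_q^G(y)$ is almost surely finite. The Kaufman--Mattila identity
\[
\int_{-\pi/2}^{\pi/2} I_1\!\big(\mathrm{Proj}_\theta(\mu_q^G)\big)\, d\theta \;\le\; C\, I_1(\mu_q^G) < \infty
\]
then forces $\mu_{q,\theta}^R$ to have a density in $L^2(l_\theta)$ for Lebesgue-a.e.\ $\theta$. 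To fold all $q \in J$ into one almost-sure event I will upgrade the energy bounds to ones locally uniform in $q$ via $\mathbb{E}(Y_K^{p_K}) < \infty$ and Proposition~\ref{mainprop}, then reduce to a countable dense subset of $J$ using the a.s.\ continuity of $q \mapsto \mu_q^G$.

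For part (b), the disintegration theorem applied to the absolutely continuous projection $\mu_{q,\theta}^R$ supplies a family $\{\nu_{q,\theta}^y\}$ on the fibers $l_{y,\theta}^\perp$ with $\mu_q^G = \int \nu_{q,\theta}^y\, d\mathcal{L}^1(y)$. Since $\{x : |x - l_{y,\theta}^\perp| \le r\} = \mathrm{Proj}_\theta^{-1}([y-r,y+r])$, Fubini and Lebesgue differentiation give, for any continuous $\psi \ge 0$,
\[
\frac{1}{r}\int_{|x - l_{y,\theta}^\perp| \le r}\!\! \psi(x)\, d\mu_q^G(x) = \frac{1}{r}\int_{y-r}^{y+r}\!\!\int \psi\, d\nu_{q,\theta}^{y'}\, dy' \;\longrightarrow\; 2\int \psi\, d\nu_{q,\theta}^y
\]
at Lebesgue-a.e.\ $y$, hence at $\mu_{q,\theta}^R$-a.e.\ $y$ by part (a). Setting $\mu_{q,\theta}^y := 2\nu_{q,\theta}^y$ gives a Borel measure carried by $G_F(\tau'(q)) \cap l_{y,\theta}^\perp = L^y_{F,\theta}(\tau'(q))$.

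For part (c), the upper bound $\dim_H(\mu_{q,\theta}^y) \le \tau^*(\tau'(q)) - \tau'(q)$ follows from Theorem~\ref{upper}(b) applied with $E = E_F(\tau'(q))$, $h = \tau'(q)$, $\mu = \mu_{q,\theta}^R$ and $\gamma = 1$ (the absolute continuity from (a) gives $h_{\mu_{q,\theta}^R}(y) = 1$ at $\mu_{q,\theta}^R$-a.e.\ $y$), combined with the trivial inequality $\dim_H(\mu_{q,\theta}^y) \le \dim_H L^y_{F,\theta}(\tau'(q))$. The matching lower bound uses the quantitative slicing inequality
\[
\int\iint |x_1 - x_2|^{-(s-1)}\, d\mu_{q,\theta}^y(x_1)\, d\mu_{q,\theta}^y(x_2)\, d\mathcal{L}^1(y) \;\le\; C\, I_s(\mu_q^G),
\]
valid for $1 < s < \gamma^G(q)$ and Lebesgue-a.e.\ $\theta$, which forces the $(s-1)$-energy of $\mu_{q,\theta}^y$ to be finite at $\mu_{q,\theta}^R$-a.e.\ $y$; letting $s \nearrow \gamma^G(q)$ gives $\dim_H(\mu_{q,\theta}^y) \ge \gamma^G(q) - 1 = \tau^*(\tau'(q)) - \tau'(q)$.

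The main obstacle is the simultaneous uniformity in the three uncountable parameters $q$, $\theta$, $y$, all of which must be accommodated by a single almost-sure event. The strategy is to first produce locally uniform-in-$q$ energy and slicing-energy bounds using Proposition~\ref{mainprop} and the moment bound $\mathbb{E}(Y_K^{p_K}) < \infty$, next to apply Fubini in $\theta$ to transfer these bounds to Lebesgue-a.e.\ direction, and finally to reduce from all $q \in J$ to a countable dense subset via a.s.\ continuity of $q \mapsto \mu_q^G$ on $J$ inherited from the analyticity of $q \mapsto W_{q,u}(w)$.
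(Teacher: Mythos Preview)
Your overall architecture is the same as the paper's --- Marstrand--Mattila projection/slicing applied to $\mu_q^G$, with Theorem~\ref{upper}(b) supplying the matching upper bound --- but there is a real gap in the first step.

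You assert that the proof of Theorem~\ref{lower} delivers $I_s(\mu_q^G)<\infty$ almost surely for every $s\in(1,\gamma^G(q))$. It does not. As Remark~\ref{sharpbd} spells out, computing $\mathbb{E}\big(I_s(\mu_q^G)\big)$ directly only yields finiteness for $s<1+\tau(2q-1)-2\tau(q)$, which is strictly below $\gamma^G(q)$ unless $q=1$. What Theorem~\ref{lower} actually establishes is finiteness of the energies $\mathcal{I}^S_{n,\delta}(q,\epsilon)$ of $\mu_q$ \emph{restricted to the Cantor-like sets} $\mathcal{C}_n(q,\epsilon)$, and these bounds blow up as $n\to\infty$ (the factor $e^{\mathbf 1_{\{p<n\}}\kappa_1 n}$ in Proposition~\ref{mainprop}). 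So the Kaufman--Mattila inequality and the slicing-energy inequality you invoke cannot be applied to the full measure $\mu_q^G$: both must be run on the restrictions $\mu_q^G|_{G_n(q,\epsilon)}$, and one must then argue that the resulting sliced measures $\mu^{y}_{q,\theta}$ inherit the correct dimension after letting $n\to\infty$, $\epsilon\to 0$. This is exactly the extra layer the paper carries: the integral $\int_0^\pi \sup_{q\in U_\epsilon(\bar q)}(\cdots)\,d\theta$ is bounded via the same quantities $(\mathcal{I}^{S,\delta}_{n,\bar q,\epsilon})_{p,m}$ and $(\Delta\mathcal{I}^{S,\delta}_{n,\bar q,\epsilon})_{p,m}$ already controlled by Proposition~\ref{mainprop}, with the restriction to $\mathcal{C}_n(q,\epsilon)$ present throughout.

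A second, smaller issue is your uniformity-in-$q$ strategy. Passing to a countable dense set and invoking continuity of $q\mapsto\mu_q^G$ does not obviously control the $\theta$-exceptional set, since the Lebesgue-null set of bad directions a priori depends on $q$ and there is no evident semicontinuity of the sliced energies in $q$. The paper handles this by taking the supremum over $q\in U_\epsilon(\bar q)$ \emph{inside} the $\theta$-integral before estimating, so that a single null set of directions works for the whole neighbourhood; finitely many such neighbourhoods then cover $K$.
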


Theorem \ref{mainthm}(b) is almost a direct consequence of Theorem \ref{upper}(b) and Theorem \ref{levelsets}, we only remark that $\gamma^G(q)>1$ if and only if $\tau'(q)<1$ and $\tau^*(\tau'(q))-\tau'(q)>0$.

\begin{remark}\label{endpoints}
As mentioned in Remark \ref{A3}, the condition ``$\varphi_W$ is finite on $\mathbb{R}$" can be weakened to ``there exists $q<-1$ such that $\varphi_W(q)$ is finite". Under this weaker assumption, the results in Theorem \ref{mainthm} will still hold, but only for $h\in \{h>0: \tau^*(h)>0\}$. The reason why we cannot conclude for $\{h>0:\tau_F^*(h)>0\}$ is that under this weaker assumption we do not know the value of $\tau_F$ outside the interval $J=\{q:q\tau'(q)-\tau(q)>0\}$. But if we assume ``there exist $\underline{q}<\overline{q}\in J$ such that $\underline{q}\tau'(\underline{q})-\tau(\underline{q})=\overline{q}\tau'(\overline{q})-\tau(\overline{q})=0$", then we will also obtain the same results as in Theorem \ref{mainthm}.
\end{remark}

\section{Proof of Theorem~\ref{upper}.}\label{second}

\subsection{Results on the packing dimension.}

\begin{proof}
Without loss of generality we suppose $I=[0,1]$. For any $x\in[0,1]$ and $n\ge 1$, we denote by $I_{x,n}$ the unique interval $[k\cdot 2^{-n},(k+1)\cdot 2^{-n})$, $k\in\mathbb{Z}$ that contains $x$.
 
Let $\{E_i\subset E,i\ge 1\}$ be any countable covering of $E$. The families $\{R_i=R_f(E_i)\subset R_f(E), i\ge 1\}$ and $\{G_i=G_f(E_i)\subset G_f(E, i\ge 1\}$ are countable coverings of $R_f(E)$ and $G_f(E)$ respectively, so 
$$\dim_{P} R_f(E) = \sup_{i} \dim_{P} R_i,\ \   \dim_{P} G_f(E) = \sup_{i} \dim_{P} G_i.
$$
For any $i\ge 1$, $k\ge 0$ and $\epsilon>0$ with $h-\epsilon>0$ we define
$$E_{i,k}=\{x\in E_i : \forall\ n\ge k, O_f(I_{x,n}) \le |I_{x,n}|^{h-\epsilon}\},
$$
as well as $R_{i,k}=R_f(E_{i,k})$ and $G_{i,k}=G_f(E_{i,k})$. By the fact that for any $x\in E_i\subset E$,
$$\liminf_{n\to\infty} \frac{\log O_f(I_{x,n})}{\log |I_{x,n}|} \ge h_f(x) \ge h,$$
we have $\bigcup_{k\ge 0}E_{i,k} = E_i$, thus $\bigcup_{k\ge 0} R_{i,k}=R_i$ and $\bigcup_{k\ge 0} G_{i,k}=G_i$.

\smallskip

For $n\ge 1$ and $A\subset \mathbb{R}^2$, let $N_n(A)$ be the minimal number of dyadic squares of the form $[k\cdot 2^{-n},(k+1)\cdot 2^n) \times[l\cdot 2^{-n},(l+1)\cdot 2^{-n}), \ 0\le k<2^n-1,\ l\in\mathbb{Z}$ necessary to cover $A$. By the construction of $E_{i,k}$, for any $n\ge k$ we have 
\begin{eqnarray*}
N_n(R_{i,k})\le 2\cdot N_{[\frac{n}{h-\epsilon}]+1}(E_{i,k}),\ 
N_n(G_{i,k}) \le\left\{
\begin{array}{l}
2^{n\cdot ((1-h+\epsilon)\vee 0)}\cdot N_n(E_{i,k}); \\
2\cdot N_{[\frac{n}{(h-\epsilon)\wedge1}]+1}(E_{i,k}).
\end{array}
\right.
\end{eqnarray*}
This gives us 
\begin{eqnarray*}
\overline{\dim}_{B} R_{i,k} \le \frac{1}{h-\epsilon}\overline{\dim}_{B} E_{i,k},\ 
\overline{\dim}_{B} G_{i,k} \le\left\{
\begin{array}{l}
(1-h+\epsilon) \vee 0+\overline{\dim}_{B} E_{i,k}; \\
\overline{\dim}_{B} E_{i,k}/((h-\epsilon)\wedge 1).
\end{array}
\right.
\end{eqnarray*}

We know that the packing dimension is equal to the upper modified box-counting dimension (see Chapter 3,~\cite{Falc}), that is, for any set $A\subset\mathbb{R}^2$,
$$\dim_P A=\overline{\dim}_{MB} A:=\inf \left\{\sup_{i} \overline{\dim}_{B} A_i: A\subset\bigcup_{i} A_i \right\},$$
where the infimum is taken over all possible countable coverings of $A$. Since $\{R_{i,k},k\ge 0\}$ and $\{G_{i,k},k\ge 0\}$ are countable coverings of $R_i$ and $G_i$, we have
\begin{eqnarray*}
\dim_{P} R_i &=& \overline{\dim}_{MB} R_i \le \sup_{k} \overline{\dim}_{B} R_{i,k} \le \frac{1}{h-\epsilon} \sup_{k} \overline{\dim}_{B} E_{i,k};\\
\dim_{P} G_i &=& \overline{\dim}_{MB} G_i \le \sup_{k} \overline{\dim}_{B} G_{i,k} \le
\left\{
\begin{array}{l} 
(1-h+\epsilon) \vee 0 + \sup_{k}\overline{\dim}_{\mathbf{B}} E_{i,k};\\
\sup_{k}\overline{\dim}_{B} E_{i,k}/((h-\epsilon)\wedge 1).
\end{array}
\right.
\end{eqnarray*}
Since for any $k\ge 0$ we have $E_{i,k}\subset E_{i}$ so that $\sup_{k}\overline{\dim}_{B} E_{i,k} \le \overline{\dim}_{B} E_{i}$. Then we have shown that for any countable covering $\{E_i,i\ge 1\}$,
\begin{eqnarray*}
\dim_{P} R_f(E) &=&\sup_{i} \dim_{P} R_i \le \frac{1}{h-\epsilon} \sup_{i} \overline{\dim}_{B} E_{i};\\
\dim_{P} G_f(E) &=&\sup_{i} \dim_{P} G_i \le 
\left\{
\begin{array}{l} 
(1-h+\epsilon)\vee 0+\sup_{i} \overline{\dim}_{B} E_{i};\\
\sup_{i}\overline{\dim}_{B} E_{i}/((h-\epsilon)\wedge1).
\end{array}
\right.
\end{eqnarray*}
By taking the infimum over all the possible $\{E_i,i\ge 1\}$ we get
\begin{eqnarray*}
\dim_{P} R_f(E) \le \frac{1}{h-\epsilon} \dim_P E,\ \dim_{P} G_f(E)  \le
\left\{
\begin{array}{l} 
(1-h+\epsilon)\vee 0+\dim_{P} E;\\
\dim_{P} E/((h-\epsilon)\wedge1).
\end{array}
\right.
\end{eqnarray*}
Letting $\epsilon$ tend to $0$ yields the conclusion. \end{proof}

\subsection{Results on the Hausdorff dimension.}

\begin{proof}

We fix $\epsilon\in (0,h)$ and $\theta\in(-\pi/2,\pi/2)$. Recall that $\mathrm{Proj}_\theta$ is the orthogonal projection of $\mathbb{R}^2$ onto $l_\theta$ and $R_{f,\theta}(E)=\mathrm{Proj}_\theta(G_f(E))$.

For $x\in \mathbb{R}$, $r>0$ and $k\in\mathbb{Z}$ we define the interval $I_k(x,r)=[x+(2k-1)r,x+(2k+1)r]$ and the square $Q_k(x,r)=[x-r,x+r]\times I_k(f(x),r)$.

For any $r>0$ let $n(r)=[r^{h-\epsilon-1}]+1$ and $\mathcal{N}(r)=\{k\in\mathbb{Z}: 2|k|\le [r^{h-\epsilon-1}]\}$. We have $\max\{r,r^{h-\epsilon}\}\le n(r)\cdot r$ and $\# \mathcal{N}(r)\le n(r)$.

For any $x\in E$ and $r>0$ define the family $\mathcal{Q}(x,r)=\Big\{Q_k(x,r):k\in \mathcal{N}(r)\Big\}$ and for any $y\in R_{f,\theta}(E)$ define the family
\begin{equation*}
\mathcal{Q}^y_\theta(x,r)=\Big\{Q_k(x,r): k\in\mathcal{N}(r), \ Q_k(x,r)\cap l_{y,\theta}^{\perp}\neq \emptyset \Big\}.
\end{equation*}
By simple calculation we know $\# \mathcal{Q}^y_\theta(x,r) \le |\tan \theta|+1$.

Let $d$ stand for $\dim_{H} E$. By definition of the Hausdorff dimension we can find a decreasing sequence $(\delta_i)_{i\ge 1}$ tending to $0$ and for each $i\ge 1$,
$$\mathscr{B}_i:=\Big\{B_j^{(i)}=B\left(x_j^{(i)},r_j^{(i)}\right)\Big \}_{j\in\mathcal{J}_i},$$
a countable $\delta_i$-covering of $E$ such that $ x_j^{(i)}\in E$ for $j\in  \mathcal{J}_i$, and $\sum_{j\in\mathcal{J}_i} (r^{(i)}_j)^{d+\epsilon} \leq 2^{-i}$.

For $x\in E$ and $r>0$ denote the rectangle:
\[
R(x,r)=B(x,r) \times \big[f(x)-n(r)\cdot r,f(x)+n(r)\cdot r\big],
\]
and for any $B_j^{(i)}=B\left(x_j^{(i)},r_j^{(i)}\right)\in \mathscr{B}_i$, we use the convention $R^{(i)}_j=R\left(x_j^{(i)},r_j^{(i)}\right)$.

Let $\mu$ be a positive Borel measure defined on $l_\theta$. For $\gamma>0$ recall that
$$R_{f,\theta}^{\mu,\gamma}(E)=\{y\in R_{f,\theta}(E): h_\mu(y) \ge \gamma\}.$$
Suppose that $\mu(R_{f,\theta}^{\mu,\gamma}(E))>0$. Then define a subset of $\mathcal{J}_i$:
\begin{equation*}
\mathcal{J}^{\mu,\gamma}_{i,\theta}=\Big\{j\in \mathcal{J}_i: \mu\Big(\mathrm{Proj}_\theta\big(R^{(i)}_j\big)\Big) \le \Big(\big(2n(r^{(i)}_j)\cdot\cos\theta+|\sin\theta|\big) \cdot r^{(i)}_j\Big)^{\gamma-\epsilon} \Big\}.
\end{equation*}
We have the following lemma:

\begin{lemma}\label{covering}
For any $N\geq 1$, let $\mathscr{C}^{R}_{N}=\bigcup_{i\geq N} \bigcup_{j \in \mathcal{J}_i} \left\{I_0(f(x^{(i)}_j), (r_j^{(i)})^{h-\epsilon})\right\}$,
$$
\mathscr{C}^{G}_{N}=\bigcup_{i\geq N} \bigcup_{j\in \mathcal{J}_i} \mathcal{Q}(x^{(i)}_j,r_j^{(i)}),\ \widetilde{\mathscr{C}^G_N}=\bigcup_{i\geq N} \bigcup_{j \in \mathcal{J}_i} \left\{Q_0(x^{(i)}_j, n(r_j^{(i)})\cdot r_j^{(i)})\right\},
$$
and for any $y\in R_{f,\theta}^{\mu,\gamma}(E)$ let $\mathscr{C}^{y}_{\theta,N}=
\bigcup_{i\geq N} \bigcup_{j\in \mathcal{J}^{\mu,\gamma}_{i,\theta}} \mathcal{Q}^y_\theta(x^{(i)}_j,r_j^{(i)})$.

Then $\mathscr{C}^{R}_{N}$, $\mathscr{C}^{G}_{N}$, $\widetilde{\mathscr{C}^G_N}$ and $\mathscr{C}^{y}_{\theta,N}$ form respectively a $(\delta_N)^{h-\epsilon}$-covering of $R_f(E)$, a $\delta_N$-covering of $G_f (E)$, an $n(\delta_N)\cdot \delta_N$-covering of $G_f (E)$, and a $\delta_N$-covering of $L_{f,\theta}^{y}(E)$.

\end{lemma}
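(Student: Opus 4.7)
The plan is to verify, for each of the four families, both the prescribed diameter bound and the covering property for the corresponding target set. The diameter bounds are immediate from $r_j^{(i)} \le \delta_i \le \delta_N$ for $i \ge N$, together with the definition $n(r) = [r^{h-\epsilon-1}]+1$, so that $n(r)r$ is comparable to $r^{h-\epsilon}$ and increasing on small scales. Each interval of $\mathscr{C}_N^R$ then has length at most $2(\delta_N)^{h-\epsilon}$; each square of $\mathscr{C}_N^G$ or $\mathscr{C}_{\theta,N}^y$ has side at most $2\delta_N$; and each large square of $\widetilde{\mathscr{C}_N^G}$ has side comparable to $n(\delta_N)\delta_N$.

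All the covering properties hinge on the local H\"older inequality $\mathrm{Osc}_f(B_j^{(i)}) \le (r_j^{(i)})^{h-\epsilon}$. Because $h_f\ge h$ only holds pointwise on $E$, I would first stratify $E = \bigcup_k \{x \in E: \mathrm{Osc}_f(B(x,r)) \le r^{h-\epsilon} \text{ for all } r \le 1/k\}$ and treat each stratum separately (harmless since $\dim_H E$ equals the supremum of these $\dim_H$'s). In this way the oscillation bound becomes uniform for $i$ large, and given $x \in B_j^{(i)} \cap E$ we get $|f(x) - f(x_j^{(i)})| \le (r_j^{(i)})^{h-\epsilon} \le n(r_j^{(i)}) r_j^{(i)}$. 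Hence $f(x) \in I_0(f(x_j^{(i)}), (r_j^{(i)})^{h-\epsilon})$, handling $\mathscr{C}_N^R$; and $(x, f(x)) \in Q_0(x_j^{(i)}, n(r_j^{(i)}) r_j^{(i)})$, handling $\widetilde{\mathscr{C}_N^G}$. Since the intervals $\{I_k(f(x_j^{(i)}), r_j^{(i)}): k \in \mathcal{N}(r_j^{(i)})\}$ tile a vertical segment centered at $f(x_j^{(i)})$ of half-length exactly $n(r_j^{(i)})r_j^{(i)}$, $(x,f(x))$ belongs to some $Q_k \in \mathcal{Q}(x_j^{(i)}, r_j^{(i)})$, handling $\mathscr{C}_N^G$.

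The delicate step is the level-set covering. Fix $y \in R_{f,\theta}^{\mu,\gamma}(E)$ and $p=(x, f(x)) \in L_{f,\theta}^y(E) = G_f(E) \cap l_{y,\theta}^\perp$. The graph-covering argument furnishes $j$ and $k$ with $p \in Q_k(x_j^{(i)}, r_j^{(i)}) \subset R_j^{(i)}$, and since $p \in l_{y,\theta}^\perp$ we automatically get $Q_k \cap l_{y,\theta}^\perp \neq \emptyset$. It remains to check $j \in \mathcal{J}_{i,\theta}^{\mu,\gamma}$. Projecting the four corners of the rectangle $R_j^{(i)}$ (of width $2r_j^{(i)}$ and height $2n(r_j^{(i)}) r_j^{(i)}$) onto $l_\theta$ shows that $\mathrm{Proj}_\theta(R_j^{(i)})$ is a closed interval of half-length $r_j^{(i)}(|\sin\theta| + n(r_j^{(i)})\cos\theta)$ that contains the point $y$; it therefore sits inside a ball of $l_\theta$ centered at $y$ of radius comparable to $(2n(r_j^{(i)})\cos\theta + |\sin\theta|)r_j^{(i)}$ (the doubling reflects that $y$ may lie anywhere in the projection). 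The hypothesis $h_\mu(y) \ge \gamma$ yields $\mu(B(y,r)) \le r^{\gamma-\epsilon}$ for $r$ small enough, and taking $N$ sufficiently large makes this applicable at every relevant scale, giving the defining inequality of $\mathcal{J}_{i,\theta}^{\mu,\gamma}$.

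The main obstacle is this last step: threading together the rotational geometry of $\mathrm{Proj}_\theta(R_j^{(i)})$, the position of $y$ inside that projection, and the pointwise local-dimension inequality for $\mu$ at $y$, in a way that reproduces exactly the bound used in the definition of $\mathcal{J}_{i,\theta}^{\mu,\gamma}$ uniformly in $j$ at all sufficiently small scales.
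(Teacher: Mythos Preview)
Your argument is essentially correct and follows the same line as the paper, but the stratification step is an unnecessary detour and slightly obscures what makes the lemma hold for \emph{every} $N\ge 1$. The paper exploits directly that each family $\mathscr{C}_N^{\bullet}$ is a union over all $i\ge N$ with $\delta_i\searrow 0$: given a fixed $x\in E$ (respectively a fixed $(x,f(x))\in L_{f,\theta}^{y}(E)$), one picks a sequence of balls $B_l=B(x_l,r_l)\in\bigcup_{i\ge N}\mathscr{B}_i$ all containing $x$, with $r_l\to 0$, and applies the pointwise liminf conditions $h_f(x)\ge h$ and $h_\mu(y)\ge\gamma$ to the ball $B(x,|x-x_l|)$ and to $B\big(y,(2n(r_l)\cos\theta+|\sin\theta|)r_l\big)$. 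This produces a single index $l_*=l_*(x)$ beyond which both $|f(x)-f(x_l)|\le r_l^{h-\epsilon}$ and the defining inequality of $\mathcal{J}_{i,\theta}^{\mu,\gamma}$ hold, yielding all four containments at once.

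Your stratification achieves exactly the same thing (for $x\in E_k$ one takes $i\ge N$ with $\delta_i\le 1/k$), so there is no gap; but the remark ``harmless since $\dim_H E$ equals the supremum of these $\dim_H$'s'' is irrelevant to the lemma as stated, and ``taking $N$ sufficiently large'' in the level-set step is misphrased: the lemma is for every $N$, and the non-uniformity in $x$ is absorbed by going far enough in $i$ inside the union $\bigcup_{i\ge N}$, not by enlarging $N$. The paper's point-by-point argument makes this mechanism transparent.
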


\begin{proof}  Fix $N\ge 1$. For any $x\in E$, since for any $i\ge N$ there are balls in $\mathscr{B}_i$ covering $x$ and $\delta_i\searrow 0$, we can find a sequence of balls $\{B_l=B(x_l,r_l)\}_{l\ge 1}\subset \bigcup_{i\geq N}\mathscr{B}_i$ such that $x\in B_l$ for all $l\ge 1$ and $r_l\searrow 0$ as $l\to\infty$. For each $l\ge 1$, let $\bar r_l=|x-x_l|\le r_l$ and $\bar r'_l=(2n(r_l)\cos\theta+|\sin\theta|)\cdot r_l$. Since
\[
\liminf_{l\to \infty} \frac{\log O_f(B(x,\bar r_l))}{\log \bar r_l}\ge h_f(x)\ge h \ \text{ and}
\]
\[
\liminf_{l\to \infty} \frac{\log \mu\Big(l_\theta\cap B\big(\mathrm{Proj}_\theta(f(x)),\bar r'_l\big)\Big)}{\log \bar r'_l}\ge h_\mu\big(\mathrm{Proj}_\theta(f(x))\big) \ge \gamma,
\]
if $\mathrm{Proj}_\theta(f(x)) \in R_{f,\theta}^{\mu,\gamma}(E)$, so we can find $l_*$ (depending on $x$) such that for all $l\ge l_*$,
$$O_f(B(x,\bar r_{l}))\le (\bar r_{l})^{h-\epsilon} \ \text{ and }\  \mu\Big(l_\theta\cap B\big(\mathrm{Proj}_\theta(f(x)),\bar r'_l\big)\Big) \le (\bar r'_l)^{\gamma-\epsilon}.$$
Now for any $l\ge l_*$ we have
$$|f(x_{l})-f(x)|\le O_f(B(x,\bar x_{l})) \le  (\bar r_{l})^{h-\epsilon} \le (r_{l})^{h-\epsilon}\leq n(r_{l})\cdot r_{l}.$$
This implies that:
\begin{itemize}
\item $x\in B(x_l,r_l)$;
\item $f(x)\in I_0(f(x_l),(r_l)^{h-\epsilon})\subset I_0(f(x_l),n(r_l)r_l)$;
\item $(x,f(x)) \in Q_0(x_l, n(r_l)\cdot r_l)$;
\item $l_\theta\cap B\big(\mathrm{Proj}_\theta(f(x)),\bar r'_l\big)\supset \mathrm{Proj}_\theta (R(x_l,r_l))$,
\end{itemize}
which gives us the conclusion. \end{proof}

Now we are going to show that the coverings constructed in Lemma~\ref{covering} lead to the expected upper bounds. In order to simplify the proof, we use the convention $|Q|=\frac{1}{2}\sup_{x,y\in Q}|x-y|$, the half-diameter of the set $Q$.

\begin{itemize}
\item[(i)] Since we took $\epsilon\in (0,h)$ we have
$$\sum_{Q\in \mathscr{C}^{R}_{N}} |Q|^{\frac{d+\epsilon}{h-\epsilon}} = \sum_{i\ge N} \sum_{j\in \mathcal{J}_i} ((r_j^{(i)})^{h-\epsilon})^{\frac{d+\epsilon}{h-\epsilon}}=  \sum_{i\ge N} \sum_{j\in\mathcal{J}_i} (r^{(i)}_j)^{d+\epsilon} \le 2^{-N+1}.$$
\item[(ii)] If $h> 1$, if we take $\epsilon$ small enough so that $h-\epsilon>1$, then $n(r)=1$ for all $r<1$, and for $N$ large enough so that $\delta_N<1$,
$$\sum_{Q\in \widetilde{\mathscr{C}^G_N}} |Q|^{d+\epsilon} = \sum_{i\ge N} \sum_{j\in \mathcal{J}_i} (n(r_j^{(i)})\cdot r_j^{(i)})^{d+\epsilon}=  \sum_{i\ge N} \sum_{j\in\mathcal{J}_i} (r^{(i)}_j)^{d+\epsilon} \le 2^{-N+1}.$$
\item[(iii)] If $h \le 1$, then $h-\epsilon-1\le -\epsilon<0$, thus for $r<1$ we have $r^{h-\epsilon-1}>1$, and this implies that $n(r)\le 2\cdot r^{h-\epsilon-1}$, hence for $N$ large enough so that $\delta_N<1$,
$$\sum_{Q\in \widetilde{\mathscr{C}^G_N}} |Q|^{\frac{d+\epsilon}{h-\epsilon}} = \sum_{i\ge N} \sum_{j \in\mathcal{J}_i} (n(r_j^{(i )})\cdot r_j^{(i )})^{\frac{d+\epsilon}{h-\epsilon}} \le 2^{\frac{d+\epsilon}{h-\epsilon}} \cdot \sum_{i\ge N} \sum_{j \in\mathcal{J}_i} (r_j^{(i )})^{d+\epsilon}\le 2^{\frac{d+\epsilon}{h-\epsilon}} \cdot 2^{-N+1}.$$
\item[(iv)] If $h \le 1$, for the same reason as (c), for $N$ large enough such that $\delta_N<1$,
\begin{eqnarray*}
&&\sum_{Q\in \mathscr{C}^{G}_{N}} |Q|^{d+1-h+2\epsilon} =\sum_{i\ge N} \sum_{j\in \mathcal{J}_i}\sum_{k\in \mathcal{N}(r^{(i)}_j)} |Q_k(x^{(i)}_j,r_j^{(i)})|^{d+1-h+2\epsilon}\\
&\le& \sum_{i\ge N} \sum_{j\in\mathcal{J}_i} n(r^{i}_j)\cdot (r_j^{(i )})^{d+1-h+2\epsilon} \le 2 \cdot \sum_{i\ge N} \sum_{j \in\mathcal{J}_i} (r_j^{(i )})^{d+\epsilon}\le 2^{-N+2}.
\end{eqnarray*}
\end{itemize}

Now by letting $N$ tend to infinity and then $\epsilon$ to $0$ we obtain the desired upper bound for $\dim_H R_f(E)$ and $\dim_H G_f(E)$. 

Next we prove the upper bound for the Hausdorff dimension of the level sets.

If $s=d+\epsilon-(\gamma-\epsilon)(h-\epsilon)>0$ and $h\le 1$, then for any $N$ large enough we have

\begin{eqnarray*}
&&\int_{y\in R_{f,\theta}^{\mu,\gamma}(E)} \sum_{Q\in \mathscr{C}^y_{\theta,N}} |Q|^{s} \mathrm{d}\mu(y)\\
&\le&\sum_{i\ge N} \sum_{j\in \mathcal{J}^{\mu,\gamma}_{i,\theta}} \sum_{Q \in \mathcal{Q}^y_\theta(x^{(i)}_j,r^{(i)}_j)} |Q|^{s}\cdot \mu\Big(\mathrm{Proj}_\theta\big(Q\big)\Big) \\
&\le&\sum_{i\ge N} \sum_{j\in \mathcal{J}^{\mu,\gamma}_{i,\theta}} (r_j^{(i)})^{s}\cdot (|\tan\theta|+1) \cdot \mu\Big(\mathrm{Proj}_\theta\big(R(x^{(i)}_j,r^{(i)}_j)\big)\Big) \\
&&\qquad \qquad \qquad \qquad\qquad  \Big( \text{since } |Q|=r^{(i)}_j \text{ and }\# \mathcal{Q}^y_\theta(x^{(i)}_j,r^{(i)}_j)\le |\tan\theta|+1 \Big)\\
&\le& (|\tan\theta|+1) \sum_{i\ge N} \sum_{j\in \mathcal{J}^{\mu,\gamma}_{i,\theta}} (r_j^{(i)})^{s}\cdot \Big(\big(2n(r^{(i)}_j)\cdot\cos\theta+|\sin\theta|\big) \cdot r^{(i)}_j\Big)^{\gamma-\epsilon} \\
&\le& C \sum_{i\ge N} \sum_{j\in \mathcal{J}^{\mu,\gamma}_{i,\theta}} (r_j^{(i)})^{s}\cdot \big((r^{i}_j)^{h-\epsilon-1}\cdot r^{(i)}_j\big)^{\gamma-\epsilon} \ \Big(\text{we used } h\le 1 \text{ and } N \text{ large enough} \Big)\\
&=& C \sum_{i\ge N} \sum_{j \in \mathcal{J}^{\mu,\gamma}_{i,\theta}} (r_j^{(i)})^{d+\epsilon} \le C \sum_{i\ge N} \sum_{j \in \mathcal{J}_i} (r_j^{(i)})^{d+\epsilon}  \le 2^{-N+1+2\gamma},
\end{eqnarray*}
where $C=2^{2(\gamma-\epsilon)}(|\tan\theta|+1)$. Due to Borel-Cantelli lemma we get for $\mu$-almost every $y\in R_{f,\theta}^{\mu,\gamma}(E)$,
$$\dim_H L^y_{f,\theta}(E)\le s=d+\epsilon-(\gamma-\epsilon)(h-\epsilon).$$
Applying this with a sequence $(\epsilon_n)_{n\ge 1}\searrow 0$ we get the conclusion. \end{proof}

\section{Proof of Theorem~\ref{lower} and Theorem~\ref{levelsets}.}\label{third}

From now on we assume that (A1)-(A3) hold.

\subsection{Cantor-like subsets of $\mathscr{A}^{\mathbb{N}_+}$ carrying $\mu_q$.} \label{cantor}

For any $w\in\mathscr{A}^*$, define the $b$-adic interval $I_w=\lambda([w])$.  By construction we know that the limit functions $F_W$ and $F_L$ satisfy the following functional equation: For any $w\in \mathscr{A}^*$, $x,y\in I_w$ and $U\in\{W,L\}$,
\begin{equation}\label{WL}
F_U(x)-F_U(y)=U_{w}(\varnothing)\cdot \left(F_U^{[w]}(b^{|w|}\cdot (x-\lambda(w))-F_U^{[w]}(b^{|w|}\cdot (y-\lambda(w))\right).
\end{equation}
For $w\in\mathscr{A}^*$ and $U\in\{W,L\}$ we define the oscillations $O_U(w)=O_{F_U^{[w]}}([0,1])$. Then from~(\ref{WL}) we get that for any $w \in \mathscr{A}^*$ and $U\in\{W,L\}$,
\begin{equation}\label{osc1}
O_{F_U}(I_w)=U_{w}(\varnothing)\cdot O_U(w).
\end{equation}

For $w\in\mathscr{A}^*$ denote by $w^-$ (resp.  $w^+$)  the unique element of $\mathscr{A}^{|w|}$ such that $\lambda(w^-)=\lambda(w)-b^{-|w|}$ (resp. $\lambda(w^+)=\lambda(w)+b^{-|w|}$)  whenever $\lambda(w)\neq 0$ (resp. $\lambda(w)\neq 1-b^{-|w|}$).

Recall~\eqref{xi(q)}, the definition of $\xi(q)$ and $\widetilde{\xi}(q)$. For any $q\in J$, $\epsilon>0$, $u,v\in\mathscr{A}^*$ we define the following subsets of $\Omega$:
\begin{equation}\label{indicLWO}
{\small
\left\{
\begin{array}{l} 
\mathscr{W}_{v}^{[u]}(q,\epsilon) = \left\{ \omega\in\Omega\ :\ \Big\{W_{v}(u),\ W_{v^-}(u),\ W_{v^+}(u)\Big\} \subset [e^{-|v|(\xi(q)+\epsilon)},\ e^{-|v|(\xi(q)-\epsilon)}]\right\};  \\
\mathscr{L}_{v}^{[u]}(q,\epsilon) = \left\{ \omega\in\Omega\ :\ \Big\{L_{v}(u),\ L_{v^-}(u),\ L_{v^+}(u)\Big\}\subset [e^{-|v|(\widetilde{\xi}(q)+\epsilon)},\ e^{-|v|(\widetilde{\xi}(q)-\epsilon)}]\right\};\\
\mathscr{O}_{v}^{[u]}(\epsilon) = \left\{\omega\in\Omega\ :\ \Big\{O_U(uv),\ O_U(uv^-),\ O_U(uv^+): U\in\{W,L\}\Big\} \subset [e^{-|v|\epsilon},\ e^{|v|\epsilon}] \right\}.
\end{array}
\right.
}
\end{equation}

For $q\in J$, $\epsilon>0$, $u,v\in\mathscr{A}^*$ we define the indicator function:
\begin{equation}\label{indicator}
\mathbf{1}_{v}^{[u]}(q,\epsilon)=\mathbf{1}_{ \mathscr{W}_{v}^{[u]}(q,\epsilon)\cap \mathscr{L}_{v}^{[u]}(q,\epsilon)\cap\mathscr{O}_{v}^{[u]}(\epsilon)}.
\end{equation}
and for $q\in J$ and $\epsilon>0$ we define random subsets of $\mathscr{A}^{\mathbb{N}_+}$: 
$$\mathscr{A}^{\mathbb{N}_+}_n(q,\epsilon)=\{t\in\mathscr{A}^{\mathbb{N}_+}: \mathbf{1}_{t|_n}^{[\varnothing]}(q,\epsilon)=1\} \ \text{ and }\  \mathscr{A}^{\mathbb{N}_+}_{n}(q,\epsilon)^c=\mathscr{A}^{\mathbb{N}_+}\setminus \mathscr{A}^{\mathbb{N}_+}_n(q,\epsilon).$$

In~\cite{BJ} we proved the following result:

\begin{proposition}\label{prop}
Let $K$ be a compact subset of $J$. Then for any $\epsilon> 0$ there exist constants $C=C(K)>0$ and $\delta=\delta(\epsilon,K)>0$ such that for any $n\ge 1$,
\begin{equation}
\mathbb{E}\Big(\sup_{q\in K} \sum_{w\in \mathscr{A}^{n} } \mu_q\big([w]\cap \mathscr{A}^{\mathbb{N}_+}_{n}(q,\epsilon)^c\big)\Big)\le C\cdot n\cdot b^{-n\delta}.
\end{equation}
\end{proposition}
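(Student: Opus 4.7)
The plan is to combine a union bound over finitely many ``single violation'' events making up $\{\mathbf{1}_w^{[\varnothing]}(q,\epsilon)=0\}$, a Chernoff-type exponential moment estimate at each fixed $q$ based on the identity $\Phi(q,\tau(q))=1$, and a discretization of the parameter set $K$ to control the supremum in $q$. Opening the definition of the indicator, the complement event is a union of at most eighteen events, parameterized by $v\in\{w,w^-,w^+\}$: $\{W_v\notin[e^{-n(\xi(q)+\epsilon)},e^{-n(\xi(q)-\epsilon)}]\}$, the analogous event for $L_v$ with $\widetilde{\xi}(q)$, and $\{O_U(v)\notin[e^{-n\epsilon},e^{n\epsilon}]\}$ for $U\in\{W,L\}$. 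By linearity of expectation it suffices to bound one representative of each type.

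For the representative event $\{W_w<e^{-n(\xi(q)+\epsilon)}\}$ at fixed $q$, I factor $\mu_q([w])=W_{q,w}(\varnothing)\,Y_q(w)$ and exploit that $Y_q(w)$ depends only on the subtree strictly below $w$, while $W_w,L_w$ depend only on the ancestral branch. Pulling out $\mathbb{E}[Y_q]$ and applying the exponential Markov inequality yields, for $s>0$,
\begin{equation*}
\mathbb{E}\!\left[\sum_{w\in\mathscr{A}^n}\mu_q([w])\mathbf{1}_{W_w<e^{-n(\xi(q)+\epsilon)}}\right]
\le \mathbb{E}[Y_q]\cdot e^{-sn\epsilon}e^{-sn\xi(q)}\,\Phi(q-s,\tau(q))^n.
\end{equation*}
Since $\Phi(q,\tau(q))=1$ and $\partial_q\Phi(q,\tau(q))=-\xi(q)$, Taylor expansion gives $\log\Phi(q-s,\tau(q))=s\xi(q)+O(s^2)$, so the exponent reduces to $-n(s\epsilon-O(s^2))\le -n\delta_0$ for $s$ small. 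Analyticity of $\Phi$ on its domain, together with compactness of $K$, lets me pick $s>0$ (and hence $\delta_0=\delta_0(\epsilon,K)>0$) uniformly in $q\in K$. The symmetric cases (too large, and the $L$-versions by varying the $t$-variable) follow by the same recipe, and the $w^\pm$ versions introduce only a bounded correction since $w^\pm$ differ from $w$ in only one coordinate. For the oscillation events, $O_U(v)$ and $Y_q(v)$ are correlated (both live on the subtree below $v$), so I decouple by H\"older's inequality using the moment bound $\mathbb{E}(Y_K^{p_K})<\infty$ from Proposition~\ref{aumeas}(b) and the fact that under (A1), (A3) the random variable $O_U$ has positive and negative moments of every order.

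For the supremum over $q\in K$, I discretize with a grid of mesh $\eta=c/n$, $c=c(\epsilon,K)$ small, yielding $M=O(n)$ anchor points $q_1,\dots,q_M$. Lipschitz continuity of $\xi,\widetilde{\xi}$ on $K$ implies that, for $q\in B(q_i,\eta)$ and $n$ sufficiently large, the bad event at $q$ with tolerance $\epsilon$ is contained in the bad event at $q_i$ with tolerance $\epsilon/2$. The mass ratio $\mu_q([w])/\mu_{q_i}([w])=|W_w|^{q-q_i}L_w^{-(\tau(q)-\tau(q_i))}\,Y_q(w)/Y_{q_i}(w)$ is then controlled: the first two factors contribute at most $\exp(|q-q_i|(|\log|W_w||+C|\log L_w|))=O(1)$ once $|\log|W_w||,|\log L_w|=O(n)$ (automatic outside the $W$- and $L$-bad events), and the ratio of $Y$'s is bounded in $L^1$ by Proposition~\ref{aumeas}(b). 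Summing the per-anchor bound over the $M=O(n)$ cells yields the target $C\,n\,b^{-n\delta}$. The main obstacle is precisely this last step: on the ``bad'' part of the event where $|\log|W_w||$ or $|\log L_w|$ may exceed $Cn$, the naive ratio bound breaks, and such contributions must be reabsorbed into the Chernoff estimate at $q_i$ by slightly enlarging the tolerance, exploiting the spare room in the Chernoff exponent $s\epsilon - O(s^2)$.
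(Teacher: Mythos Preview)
The paper does not prove this proposition; it simply imports it from \cite{BJ} (Barral--Jin, \emph{Multifractal analysis of complex random cascades}). So there is no in-paper proof to compare against, and your outline should be judged on its own merits against what one expects the argument in \cite{BJ} to be: a large-deviation estimate for the products $W_w,L_w$ together with a device to pass from a single $q$ to a supremum over the compact $K$.

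Your strategy (union bound over the constituent events, Chernoff via the normalisation $\Phi(q,\tau(q))=1$, and a grid of $O(n)$ anchor points producing the polynomial factor $n$) is the right shape. Two points deserve tightening. First, the claim that ``$w^\pm$ differ from $w$ in only one coordinate'' is false in general: if $w=j\,0\cdots0$ then $w^-=(j{-}1)(b{-}1)\cdots(b{-}1)$, so $w$ and $w^-$ may disagree on every letter; the correct bookkeeping is to split according to the length of the common prefix of $w$ and $w^\pm$ and sum the resulting geometric series. Second, controlling the ratio $Y_q(w)/Y_{q_i}(w)$ ``in $L^1$ by Proposition~\ref{aumeas}(b)'' is not what that proposition gives you; it bounds $\mathbb{E}(Y_K^{p_K})$ with $Y_K=\sup_{q\in K}Y_q$, not the ratio. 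The cleaner route is to avoid ratios altogether: use $\mu_q([w])\le W_{q,w}(\varnothing)\,Y_K(w)$ and the independence of $W_{q,w}(\varnothing)$ (ancestral branch) from $Y_K(w)$ (subtree below $w$) to factor the expectation, leaving only $\mathbb{E}\big[\sup_{q\in K} W_{q,w}(\varnothing)\,\mathbf{1}_{\text{bad}_{q,w}}\big]$ to handle via your grid.

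The obstacle you flag at the end is real and is the heart of the matter: on the deviation event $\{|\!\log|W_w||\gg n\}$ the naive bound on $|W_w|^{q-q_i}$ fails when $|q-q_i|\asymp 1/n$. Your proposed cure---absorb the extra factor into the Chernoff weight by shifting the exponent from $q$ to $q-s$ with a little headroom in $s$---is the right idea, but as written it is still a sketch: you should make explicit that for each bad event one chooses a tilting parameter $s$ depending only on $(\epsilon,K)$, so that over a ball $B(q_i,c/n)$ one has $\sup_q W_{q,w}\mathbf{1}_{\text{bad}_q}\le C\,W_{q_i-s,w}\,e^{-sn\epsilon/2}$ (or the corresponding $t$-shift for the $L$-events), whence summing over $w\in\mathscr{A}^n$ produces $\Phi(q_i-s,\tau(q_i))^n e^{-sn\epsilon/2}\le b^{-n\delta}$ uniformly in $i$ by the second-order Taylor bound you already invoked. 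For the oscillation events you also need, beyond $\mathbb{E}(Y_K^{p_K})<\infty$, that $O_W$ and $O_L$ have finite positive and negative moments; the positive moments follow from (A1), and the negative moments require an extra argument (for $O_L=F_L(1)$ this is a standard fact about non-degenerate Mandelbrot martingales, and for $O_W$ one uses $O_W\ge |W_0|\,|F_W^{[0]}(1)|$ together with the bounded density of $F_W(1)$ established later in the paper).
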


Now for $n\ge 1$ and $\epsilon>0$ we define the random Cantor-like sets in $\mathscr{A}^{\mathbb{N}_+}$
\begin{equation}\label{Xin}
\mathcal{C}_n(q,\epsilon)= \bigcap_{p\ge n} \mathscr{A}^{\mathbb{N}}_p(q,\epsilon) \text{ and } \mathcal{C}(q)=\lim_{\epsilon\to 0}\lim_{n\to\infty}\mathcal{C}_n{(q,\epsilon)}.
\end{equation}
Then we can deduce from Proposition~\ref{prop} that, with probability 1, for all $q\in K$, $\mu_q$ is carried by $\mathcal{C}(q)$, that is, $\mu_q(\mathcal{C}(q))=\|\mu_q\|=Y_q>0$.

It worth noting that by construction, for any $t\in \mathcal{C}(q)$, we have
\[
\lim_{n\to\infty} \frac{\log W_{t|_n}(\varnothing)}{-n}=\lim_{n\to\infty} \frac{\log W_{t|_n^-}(\varnothing)}{-n}=\lim_{n\to\infty} \frac{\log W_{t|_n^+}(\varnothing)}{-n}=\xi(q),
\]
\[
\lim_{n\to\infty} \frac{\log L_{t|_n}(\varnothing)}{-n}=\lim_{n\to\infty} \frac{\log L_{t|_n^-}(\varnothing)}{-n}=\lim_{n\to\infty} \frac{\log L_{t|_n^+}(\varnothing)}{-n}=\widetilde{\xi}(q),
\]
\[
\lim_{n\to\infty} \frac{\log O_U(t|_n)}{-n}=\lim_{n\to\infty} \frac{\log O_U(t|_n^-)}{-n}=\lim_{n\to\infty} \frac{\log O_U(t|_n^+)}{-n}=0, \ U\in\{W,L\}.
\]
Moreover, due to \eqref{osc1}, the above equalities imply that
\[
\lim_{r\to \infty} \frac{\log \mathrm{Osc}_F(B(\lambda(t),r))}{\log r} = \xi(q)/\widetilde{\xi}(q)=\tau'(q).
\]

\subsection{Proof of Theorem~\ref{lower}.}\label{proofthm2}

\begin{proof}

From now on, for $U\in\{W,L\}$, $w\in\mathscr{A}^*$ and $t\in \mathscr{A}^*\cup\mathscr{A}^{\mathbb{N}_+}$, we will use the convention $F_U^{[w]}\circ\lambda(t)=F_U^{[w]}(t)$.

For any $s,t\in\mathscr{A}^*\cup\mathscr{A}^{\mathbb{N}_+}$ and $\gamma>0$ we define the Riesz-like kernels:
\begin{equation}\label{kernel}
\mathcal{K}_{\gamma}(s,t)=
\left\{
\begin{array}{ll}
(|F_L(s)-F_L(t)|^2+|F_W(s)-F_W(t)|^2)^{-\gamma/2}\vee 1, & \textrm{ if } \gamma\ge 1;\\
\ \\
|F_W(s)-F_W(t)|^{-\gamma}\vee 1, & \textrm{ if }\gamma< 1.
\end{array}
\right.
\end{equation}

Recall the definitions of $\gamma^G(q)$ and $\gamma^R(q)$ in~\eqref{gammaG} and~\eqref{gammaR} (see also Remark~\ref{J12}).

For $q\in J$, $S\in\{G,R\}$ and $\delta>0$ we will use the notation
$$\mathcal{K}^{S}_{q,\delta}(s,t)=\mathcal{K}_{\gamma^S(q)-\delta}(s,t).$$

Recall the definition of $\mathcal{C}_n(q,\epsilon)$ in \eqref{Xin}. For $q\in J$, $\epsilon>0$ and $\delta>0$ define the $n$-th energy for $n\ge 1$ and $S\in\{G, R\}$:
\begin{equation}
\mathcal{I}^{S}_{n,\delta}(q,\epsilon)=\iint_{s,t\in \mathcal{C}_n(q,\epsilon), s\neq t} \mathcal{K}^{S}_{q,\delta}(s,t) \ \mathrm{d} \mu_q(s) \mathrm{d} \mu_q(t).
\end{equation}

Let $K$ be any compact subset of $J$. We assume for a while that we have proved that there exists $\delta_K>0$ such that for any $\delta\in(0,\delta_K)$, there exists $\epsilon_\delta>0$ such that for any $n\ge 1$, $\epsilon\in (0,\epsilon_\delta)$ and $S\in \{G, R\}$,
\begin{equation}\label{IGR}
\mathbb{E}\left(\sup_{q\in K} \mathcal{I}^{S}_{n,\delta}(q,\epsilon) \right)<\infty.
\end{equation}

The following lemma is a slight modification of Theorem 4.13 in~\cite{Falc} regarding the Hausdorff dimension estimate through the potential theoretic method.

\begin{lemma}\label{ptm}
Let $\mu$ be a Borel measure on $\mathbb{R}^m$ and let $E\subset\mathbb{R}^m$ be a Borel set such that $\mu(E)>0$. For any $\gamma>0$, if 
$$
\iint_{x,y\in E, x\neq y} |x-y|^{-\gamma}\vee 1 \ \mathrm{d}\mu(x)\mathrm{d}\mu(y)<\infty,
$$
then
$$\mu\left(\left\{ x\in E : h_\mu(x)= \liminf_{r\to 0^+} \frac{\log \mu(B(x,r))}{\log r}< \gamma\right\}\right)=0.$$
\end{lemma}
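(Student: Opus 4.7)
\medskip

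\textbf{Plan.} The statement is a standard potential-theoretic lemma in the spirit of Frostman; the modification with respect to Theorem 4.13 in \cite{Falc} is only the truncation by $\vee\, 1$ (which does not affect the local behaviour) and the replacement of the conclusion ``$\dim_H E \ge \gamma$'' by the pointwise statement on $h_\mu$. The plan is to reduce the pointwise statement to a finiteness-of-potential statement via Fubini, and then to show by a direct volume argument that a finite $\gamma$-potential at $x$ forces the lower local dimension $h_\mu(x)$ to be at least $\gamma$.

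\medskip

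First I would introduce the $\gamma$-potential
\[
\phi(x)=\int_{y\in E,\ y\neq x} \bigl(|x-y|^{-\gamma}\vee 1\bigr)\,\mathrm{d}\mu(y),\qquad x\in E.
\]
The hypothesis of the lemma states precisely that $\int_E \phi(x)\,\mathrm{d}\mu(x)<\infty$, hence by Fubini's theorem $\phi(x)<\infty$ for $\mu$-almost every $x\in E$. The whole argument then reduces to showing that, for every non-atomic point $x$ of $\mu$, the implication $\phi(x)<\infty \Longrightarrow h_\mu(x)\ge \gamma$ holds. (The set of atoms of a finite Borel measure is at most countable, and since in the applications of interest the relevant $\mu=\mu_q$ is non-atomic, this causes no loss; in any case the atoms form a $\mu$-null set for diffuse measures.)

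\medskip

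For the key implication I would argue by contrapositive. Suppose $x$ is not an atom and $h_\mu(x)<\gamma$. By the definition \eqref{hmu} of $h_\mu(x)$ as a liminf, there exist $\gamma'\in(h_\mu(x),\gamma)$ and a sequence $r_n\downarrow 0$ with $r_n<1$ such that
\[
\frac{\log \mu(B(x,r_n))}{\log r_n}<\gamma',
\]
which, since $\log r_n<0$, is equivalent to $\mu(B(x,r_n))>r_n^{\gamma'}$. For $y\in B(x,r_n)\setminus\{x\}$ one has $|x-y|^{-\gamma}\ge r_n^{-\gamma}\ge 1$, so that $|x-y|^{-\gamma}\vee 1=|x-y|^{-\gamma}$, and therefore
\[
\phi(x)\;\ge\;\int_{B(x,r_n)\setminus\{x\}} |x-y|^{-\gamma}\,\mathrm{d}\mu(y)\;\ge\; r_n^{-\gamma}\,\mu\bigl(B(x,r_n)\setminus\{x\}\bigr)\;=\;r_n^{-\gamma}\,\mu(B(x,r_n)),
\]
the last equality using $\mu(\{x\})=0$. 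This lower bound is at least $r_n^{\gamma'-\gamma}$, which tends to $+\infty$ since $\gamma'<\gamma$, so $\phi(x)=+\infty$. Contrapositively, $\phi(x)<\infty$ forces $h_\mu(x)\ge\gamma$.

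\medskip

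Combining the two steps, the set $\{x\in E: h_\mu(x)<\gamma\}$ is contained, modulo the (countable, hence $\mu$-null for non-atomic $\mu$) set of atoms, in the $\mu$-null set $\{x\in E:\phi(x)=+\infty\}$. This yields the conclusion. There is no real obstacle here: the only mildly delicate point is the exclusion of atoms in the lower bound $\mu(B(x,r_n)\setminus\{x\})=\mu(B(x,r_n))$, which is automatic in the intended application because the measures $\mu_q$ constructed in Section~\ref{badic} are (almost surely) non-atomic.
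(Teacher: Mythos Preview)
The paper does not actually prove this lemma; it merely states it as a slight modification of Falconer's Theorem~4.13 and moves on. Your approach is the standard Frostman-potential argument and is exactly what one expects here.

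There is, however, a genuine gap in your write-up. The potential $\phi(x)$ integrates only over $y\in E$, so the correct lower bound is
\[
\phi(x)\;\ge\;\int_{(B(x,r_n)\cap E)\setminus\{x\}}|x-y|^{-\gamma}\,\mathrm{d}\mu(y)\;\ge\; r_n^{-\gamma}\,\mu\bigl(B(x,r_n)\cap E\bigr),
\]
not $r_n^{-\gamma}\mu(B(x,r_n))$ as you wrote. What your argument actually establishes is $h_{\mu|_E}(x)\ge\gamma$ for $\mu$-a.e.\ $x\in E$, not $h_\mu(x)\ge\gamma$; since $h_\mu\le h_{\mu|_E}$ in general, this is strictly weaker. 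In fact the lemma as literally stated is false without further hypotheses: take $\mu=\delta_0$ on $\mathbb{R}$ and $E=\{0\}$; the double integral over $\{(x,y)\in E\times E:x\ne y\}=\emptyset$ vanishes, yet $h_\mu(0)=0<\gamma$ for every $\gamma>0$. Your handling of atoms already hints at this, but the restriction to $y\in E$ is a second, independent issue that you overlooked.

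None of this matters for the paper's application: the measures $\mu_q^G$ and $\mu_q^R$ are almost surely non-atomic and are carried by the sets playing the role of $E$ (because $\mu_q$ is carried by $\mathcal{C}(q)$). Under the extra hypothesis $\mu(\mathbb{R}^m\setminus E)=0$ one has $\mu(B(x,r)\cap E)=\mu(B(x,r))$ and your argument goes through verbatim. You should either add this hypothesis explicitly to the lemma, or rephrase the conclusion in terms of $h_{\mu|_E}$ rather than $h_\mu$; either fix is adequate for the use made of the lemma in Section~\ref{proofthm2}.
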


Then, it easily follows from Proposition~\ref{prop}, \eqref{IGR} and Lemma~\ref{ptm} that, with probability 1, for all $q\in K$:
\begin{itemize}
\item for $\mu_q^{G}$-almost every $x\in G_{q}:=\{(F_L(t),F_W(t)):t\in  \mathcal{C}(q) \}\subset G_F(\tau'(q))$,
$$
h_{\mu_q^G}(x)=\liminf_{r\to 0^+} \frac{1}{\log r}\log \mu^{G}_q(B(x,r)) \ge \gamma^{G}(q)-\delta;
$$
\item for $\mu_q^{R}$-almost every $y\in R_{q}:=\{F_W(t):t\in \mathcal{C}(q) \}\subset R_F(\tau'(q))$,
$$
h_{\mu_q^R}(x)=\liminf_{r\to 0^+} \frac{1}{\log r}\log \mu^{R}_q(B(y,r)) \ge \gamma^{R}(q)-\delta.
$$
\end{itemize}

We can consider a countable sequence of compact subintervals $K_{n}\subset J$ such that $\bigcup K_{n}=J$ and a corresponding sequence $\delta_n\in(0,\delta_{K_n})$. Then the above facts imply that with probability $1$, for any $q\in J$ and $S\in\{G,R\}$, for $\mu_q^S$-almost every $x\in S_q$, $h_{\mu_q^S}(x)\ge \gamma^S(q)$, hence $\dim_H(\mu_q^S)\ge \gamma^S(q)$ (we use the mass distribution principle, see \cite{Falc}).

\medskip

To complete the proof, we use the fact that, with probability $1$, for all $q\in J$, $\mu_q^{D}$ is carried by the set $E_F(\tau'(q))$. Then, applying Theorem~\ref{upper} to any set $E\subset\mathrm{Supp} (\mu_q^{D})\cap E_F(\tau'(q))$ yields
\begin{eqnarray*}
\dim_H (\mu_q^{G}) &\le& \Big( \frac{\dim_H (\mu_q^{D})}{\tau'(q)} \wedge \big(\dim_H (\mu_q^{D})+1-\tau'(q)\big)\Big) \vee \dim_H (\mu_q^{D}),\\
\dim_H (\mu_q^{R}) &\le&  \frac{\dim_H (\mu_q^{D})}{\tau'(q)} \wedge 1,
\end{eqnarray*}
and the conclusion comes from the fact that $\dim_H (\mu_q^{D})=\tau^*(\tau'(q))$ for all $q\in J$.

\medskip

Now we prove \eqref{IGR}.

\medskip

For any $\bar{q}\in K$ and $\epsilon>0$ we define the neighborhood of $\bar{q}$ in $K$:
\begin{equation}\label{Ulambda}
U_\epsilon(\bar{q})=\left\{q\in K: \max_{\alpha\in\{\xi,\widetilde{\xi},\tau,\tau',\gamma,\gamma^{G},\gamma^{R}\}} |\alpha(q)-\alpha(\bar{q})| < \epsilon \right\}.
\end{equation}
By continuity of these functions, the set $U_\epsilon(\bar{q})$ is open in $K$.

For any $u,v\in \mathscr{A}^*\cup\mathscr{A}^{\mathbb{N}_+}$ and $p\ge 2$, we define the indicator function
$$\mathbf{1}_p(u,v)=\mathbf{1}_{\{b^{-p+1}\le |\lambda(u)-\lambda(u)|<b^{-p+2}\}}.$$

For any $w\in\mathscr{A}^*$ we use the notation 
$$
[w]^n_{q,\epsilon}=[w]\cap\mathcal{C}_n(q,\epsilon).
$$ 
Notice that for $q\in K$, $\delta>0$ and $S\in\{G,R\}$ the Riez-like kernels $\mathcal{K}^{S}_{q,\delta}$ is a positive function and, moreover, by the continuity of $F_W$ and $F_L$ we have for any $s,t\in\mathscr{A}^{\mathbb{N}_+}$,
\[
\lim_{m\to \infty}\mathcal{K}^{S}_{q,\delta}(s|_m,t|_m)=\mathcal{K}^{S}_{q,\delta}(s,t).
\]
Then by applying Fatou's lemma we get
\begin{eqnarray*}
&&\mathcal{I}^{S}_{n,\delta}(q,\epsilon) \\
&=& \iint_{s,t\in \mathcal{C}_n(q,\epsilon),s\neq t} \lim_{m\to\infty} \mathcal{K}^{S}_{q,\delta}(s|_m,t|_m)\  \mathrm{d} \mu_q(s) \mathrm{d} \mu_q(t) \\
&=&\sum_{p\ge 2} \iint_{s,t\in \mathcal{C}_n(q,\epsilon); \mathbf{1}_p(s,t)=1} \lim_{m\to \infty} \mathcal{K}^{S}_{q,\delta}(s|_m,t|_m)\  \mathrm{d} \mu_q(s) \mathrm{d} \mu_q(t) \\
&\le& \sum_{p\ge 2} \liminf_{m\to \infty}  \iint_{s,t\in \mathcal{C}_n(q,\epsilon); \mathbf{1}_p(s,t)=1}  \mathcal{K}^{S}_{q,\delta}(s|_m,t|_m)\  \mathrm{d} \mu_q(s) \mathrm{d} \mu_q(t) \\
&= &\sum_{p\ge 2} \liminf_{m\to \infty} \sum_{u,v\in \mathscr{A}^m; \mathbf{1}_p(u,v)=1} \mathcal{K}^{S}_{q,\delta}(u,v)\cdot \mu_q([u]^n_{q,\epsilon}) \mu_q([v]^n_{q,\epsilon})\\
&\le& \sum_{p\ge 2} \liminf_{m\to \infty} \sum_{u,v\in \mathscr{A}^m; \mathbf{1}_p(w,u)=1} \mathcal{K}^{S}_{\bar{q},\delta+\epsilon}(u,v)\cdot \mu_q([u]^n_{q,\epsilon}) \mu_q([v]^n_{q,\epsilon}),
\end{eqnarray*}
where the last inequality comes from the fact that due to~\eqref{kernel} and~\eqref{Ulambda}, for any $\bar{q}\in K$, $\epsilon>0$ and $u,v\in\mathscr{A}^*\cup\mathscr{A}^{\mathbb{N}_+}$, we have $\sup_{q\in U_\epsilon(\bar{q}) }  \mathcal{K}^{S}_{ q,\delta}(u,v) \le \mathcal{K}^{S}_{\bar q,\delta+\epsilon}(u,v)$. Let 
$$
A_{p,m}=\sum_{u,v\in \mathscr{A}^m; \mathbf{1}_p(u,v)=1} \mathcal{K}^{S}_{\bar q,\delta+\epsilon}(u,v)\cdot \mu_q([u]^n_{q,\epsilon}) \mu_q([v]^n_{q,\epsilon}).
$$
Then,
\begin{eqnarray}
\sup_{q\in U_\epsilon(\bar{q}) }  \mathcal{I}^{S}_{n,\delta}(q,\epsilon) &\le& \sup_{q\in U_\epsilon(\bar{q}) } \sum_{p\ge 2} \liminf_{m\to \infty}\ A_{p,m}  \nonumber \\
&\le&  \sup_{q\in U_\epsilon(\bar{q}) } \sum_{p\ge 2} \Big( A_{p,m_p}+\sum_{m\ge m_p} |A_{p,m+1}-A_{p,m}|\Big) \nonumber \\ 
&\le&  \sum_{p\ge 2} \Big(  \sup_{q\in U_\epsilon(\bar{q}) }A_{p,m_p}+\sum_{m\ge m_p}  \sup_{q\in U_\epsilon(\bar{q}) }|A_{p,m+1}-A_{p,m}|\Big), \label{uniupper}
\end{eqnarray}
where for $p\ge 2$, we can choose $m_p\ge 3$ to be any integer. We have
\begin{equation}\label{IdeltaI}
\sup_{q\in U_\epsilon(\bar{q}) } A_{p,m} \le (\mathcal{I}^{S,\delta}_{n,\bar{q},\epsilon})_{p,m} \quad\text{and}\quad\sup_{q\in U_\epsilon(\bar{q}) }|A_{p,m+1}-A_{p,m}|\le (\Delta \mathcal{I}^{S,\delta}_{n,\bar{q},\epsilon})_{p,m},
\end{equation}
where
\begin{eqnarray*}
(\mathcal{I}^{S,\delta}_{n,\bar{q},\epsilon})_{p,m}&=& \sum_{u,v\in \mathscr{A}^m;\mathbf{1}_p(u,v)=1} \mathcal{K}^{S}_{\bar q,\delta+\epsilon}(u,v) \sup_{q\in U_\epsilon(\bar{q}) } \mu_q([u]^n_{q,\epsilon}) \mu_q([v]^n_{q,\epsilon}),\\
 (\Delta \mathcal{I}^{S,\delta}_{n,\bar{q},\epsilon})_{p,m} &=&\sum_{u,v\in \mathscr{A}^m; u',v'\in\mathscr{A}; \mathbf{1}_p(u,v)=1} \big|\mathcal{K}^{S}_{\bar q,\delta+\epsilon}(uu',vv')-\mathcal{K}^{S}_{\bar q,\delta+\epsilon}(u,v)\big|\cdot \\
 && \qquad\qquad\qquad\qquad\qquad\qquad\qquad\qquad \sup_{q\in U_\epsilon(\bar{q}) } \mu_q([uu']^n_{q,\epsilon}) \mu_q([vv']^n_{q,\epsilon}),
\end{eqnarray*}
and we have used the equality $\mu_q([u]^n_{q,\epsilon})=\sum_{u'\in\mathscr{A}} \mu_q([uu']^n_{q,\epsilon})$ to get the second inequality.

\begin{remark}\label{J12} $\ $
\begin{itemize}
\item[(a)] For technical reasons, we need to divide $J$ into three parts, in which $K$ will be chosen: 
$$
\begin{cases}
J_1=\{q\in J : \gamma^{G}(q)>1\},\\
J_2=\{q\in J : \gamma^{G}(q)\le 1, \ \tau'(q)< 1\},\\
J_3= \{q\in J : \gamma^{G}(q)\le 1, \ \tau'(q)\ge  1\}.
\end{cases}
$$
Then, due to \eqref{gammaG} and~\eqref{gammaR}, with $h=\tau'(q)$ we have 
\begin{equation*}
\gamma^G(q)=\left\{
\begin{array}{ll}
\tau^*(h)+1-h & \textrm{if } q\in J_1,\\
\displaystyle \tau^*(h)/h & \textrm{if } q\in J_2, \\
\tau^*(h)&\textrm{if } q\in J_3
\end{array}
\right. \text{ and } 
\gamma^R(q)=\left\{
\begin{array}{ll}
1& \textrm{if } q\in J_1,\\
 \tau^*(h)/h & \textrm{if } q\in J_2\cup J_3.
\end{array}
\right.
\end{equation*}
\item[(b)] Here we briefly explain why $\dim_H(\mu_q^G)=\gamma^{G}(q)=\tau^*(\tau'(q))$ when $q\in J_3$. Thus we will not consider this case in the rest of the proof. 

\noindent
From~\cite{BJ} we have $\dim_H(\mu_q^D)=\tau^*(\tau'(q))$ and $\mu^D_q$ is the orthogonal projection of $\mu^G_q$ onto the $x$-axis, so we automatically have $\dim_H(\mu_q^G) \ge \dim_H(\mu_q^D)=\tau^*(\tau'(q))$. Then to prove $\dim_H(\mu_q^G)=\gamma^{G}(q)$ we only need an upper bound estimate, but this estimate is actually a direct consequence of Theorem~\ref{upper}.
\end{itemize}
\end{remark}

For any compact subset $K$ of $J$ there exists $c_K\in(0,1)$ such that for any $c<c_K$, $\gamma^{G}(q)-c>1$ if $K\subset J_1$ and $\gamma^{R}(q)-c>0$ if $K\subset J_2\cup J_3$. Let $\delta_K=\epsilon_K=c_K/2$.

An essential tool in this paper is the following proposition, whose proof is given in Section~\ref{forth}. 

\begin{proposition}\label{mainprop}
Let $S\in\{G,R\}$. Suppose that  $K$ is a compact subset of $J_1$ or $J_2$ if $S=G$, or a compact subset of $J_1$ or $J_2\cup J_3$ if $S=R$. Then there exists $\epsilon_*\in(0,\epsilon_K)$ such that for any $0<\delta<\delta_K$, we can find constants $\kappa_1,\kappa_2, \eta_1,\eta_2>0$ and $C>0$ such that for any $\bar{q}\in K$, $0<\epsilon\le \epsilon_*$, $n\ge 1$, $p\ge 2$, and $m\ge 3\cdot (n\vee p)$
\begin{eqnarray*}
\mathbb{E}\left((\mathcal{I}^{S,\delta}_{n,\bar{q},\epsilon})_{p,m}\right) &\le& C\cdot b^{(n\vee p) -p+1} \cdot e^{\mathbf{1}_{\{p<n\}}\kappa_1\cdot n}\cdot e^{-\eta_1\delta \cdot (n\vee p)+\kappa_1\epsilon\cdot m}; \\
\mathbb{E}\left((\Delta \mathcal{I}^{S,\delta}_{n,\bar{q},\epsilon})_{p,m}\right) &\le& C\cdot b^{(n\vee p) -p+1} \cdot e^{\kappa_2\cdot (n\vee p)-\eta_2\cdot m}.
\end{eqnarray*}
\end{proposition}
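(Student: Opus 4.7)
\medskip

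\textbf{Proof plan.} The strategy is to evaluate the double sum defining $(\mathcal{I}^{S,\delta}_{n,\bar q,\epsilon})_{p,m}$ by grouping pairs $(u,v) \in \mathscr{A}^m \times \mathscr{A}^m$ with $\mathbf{1}_p(u,v)=1$ according to the length $\ell$ of their greatest common prefix $w$. Since $|\lambda(u)-\lambda(v)|$ is of order $b^{-p}$, one has $\ell \in \{p-2,p-1\}$ up to neighbour corrections: two points in adjacent $b$-adic intervals only share a common prefix at a shorter level, which is exactly why the indicators (\ref{indicLWO})--(\ref{indicator}) simultaneously track $w$, $w^-$ and $w^+$. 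When $p<n$, the Cantor constraint $\mathcal{C}_n(q,\epsilon)$ forces additional control along the first $n$ levels of $u$ and $v$, producing the factor $e^{\mathbf{1}_{\{p<n\}}\kappa_1 n}$ in the statement. The number of admissible pairs with a common prefix of length $\ell$ is $O(b^{2m-\ell})$, and after factoring out the $\mu_q$-mass at levels $m-\ell$ one is left with $O(b^{(n\vee p)-p+1})$ relevant configurations.

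\medskip

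Once $w$ is fixed, the functional equation (\ref{WL}) and the self-similar representation (\ref{selfsimimeasure}) produce a clean top/bottom factorisation: for $u=wu'$ and $v=wv'$,
\[
F_U(\lambda(u))-F_U(\lambda(v))=U_w(\varnothing)\cdot\bigl(F_U^{[w]}(\lambda(u'))-F_U^{[w]}(\lambda(v'))\bigr),\qquad \mu_q([u])=W_{q,w}(\varnothing)\cdot\mu_q^{[w]}([u']),
\]
where $F_U^{[w]}$ and $\mu_q^{[w]}$ depend only on the sub-tree rooted at $w$ and are independent of everything above. I bound the kernel $\mathcal{K}^{S}_{\bar q,\delta+\epsilon}$ by either $|F_L|^{-(\gamma^G-\delta-\epsilon)}$ when $K\subset J_1$ (using $(|F_L|^2+|F_W|^2)^{-\gamma/2}\le |F_L|^{-\gamma}$), or by $|F_W|^{-(\gamma^S-\delta-\epsilon)}$ otherwise. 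The first choice is well-suited to the monotonicity of $F_L$: on $\mathcal{C}_n(\bar q,\epsilon)$, $|F_L(\lambda(u))-F_L(\lambda(v))|$ is at least a positive constant times $L_w(\varnothing)\ge e^{-\ell(\widetilde{\xi}(\bar q)+\epsilon)}$, giving a \emph{deterministic} lower bound on the denominator. The second choice is the non-monotone regime, where $|F_W^{[w]}(\lambda(u'))-F_W^{[w]}(\lambda(v'))|^{-(\gamma^S-\delta-\epsilon)}$ must be controlled in expectation inside the sub-tree; this is where assumption (A3) enters crucially, because $F_W(1)$ admits a bounded density and $\gamma^S-\delta-\epsilon<1$ in this regime, so a standard inverse moment estimate yields a uniform bound independent of $w$.

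\medskip

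Combining the kernel estimate, the measure factorisation, and the indicator bounds from (\ref{indicLWO}), and then exploiting the independence of the cascade data attached to disjoint cylinders, the expectation reduces (after summing over $w\in\mathscr{A}^\ell$) to
\[
\Phi\bigl(2\bar q,\, 2\tau(\bar q)+(\gamma^S(\bar q)-\delta-\epsilon)\cdot c_S\bigr)^{\ell}\cdot (\text{sub-tree moment factor}),
\]
for an explicit $c_S>0$ determined by whether $L$ or $W$ appears in the chosen kernel bound. The arithmetic point that makes everything cohere is that, by (\ref{gammaG})--(\ref{gammaR}) and $\Phi(\bar q,\tau(\bar q))=1$, this base equals exactly $1$ when $\delta=\epsilon=0$; analyticity and strict convexity of $\Phi$ then force a strict decrease at rate $e^{-\eta_1\delta}$ per level once $\delta>0$, and Proposition \ref{aumeas}(b) controls the sub-tree moment factor uniformly over $q\in U_\epsilon(\bar q)$, at the cost of the $e^{\kappa_1\epsilon m}$ slack absorbing the $\epsilon$-perturbations along the $m-\ell$ levels below $w$. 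The estimate on $\mathbb{E}((\Delta\mathcal{I}^{S,\delta}_{n,\bar q,\epsilon})_{p,m})$ is obtained by the same method applied to the Lipschitz-type bound
\[
\bigl|\mathcal{K}^S_{\bar q,\delta+\epsilon}(uu',vv')-\mathcal{K}^S_{\bar q,\delta+\epsilon}(u,v)\bigr|\le (\gamma^S-\delta-\epsilon)\,\mathcal{K}^S_{\bar q,\delta+\epsilon+1}(u,v)\cdot\bigl(|F_U^{[u]}(u')|+|F_U^{[v]}(v')|\bigr)\,|U_u U_v|,
\]
where the extra oscillation factor at level $m$ produces the exponential gain $e^{-\eta_2 m}$ via one more application of (\ref{indicLWO}). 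The main obstacle is the non-monotone regime in the kernel estimate: turning the stochastic lower bound on $|F_W|$ into a uniform moment bound requires the density property from (A3) together with a careful conditioning argument inside the independent sub-tree, and this is the only place where the structure of $W$ (beyond its moments) genuinely matters.
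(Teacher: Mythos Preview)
Your plan has a genuine gap in the case $K\subset J_1$ (that is, $\gamma:=\gamma^G(\bar q)-\delta-\epsilon>1$). You propose to bound the kernel by
\[
\bigl(|F_L(u)-F_L(v)|^2+|F_W(u)-F_W(v)|^2\bigr)^{-\gamma/2}\le |F_L(u)-F_L(v)|^{-\gamma},
\]
and then use only the deterministic lower bound $|F_L(u)-F_L(v)|\gtrsim e^{-r(\widetilde\xi(\bar q)+O(\epsilon))}$ coming from the Cantor restriction. If you carry this through the rest of your outline (measure factor $e^{-2m\gamma(\bar q)}$, counting factor $e^{(2m-r)\gamma(\bar q)}$), the exponent per unit of $r=n\vee p$ becomes, at $\epsilon=0$,
\[
-\gamma(\bar q)+\widetilde\xi(\bar q)\,\gamma
=-\gamma(\bar q)+\widetilde\xi(\bar q)\Bigl(1+\tfrac{\gamma(\bar q)-\xi(\bar q)}{\widetilde\xi(\bar q)}-\delta\Bigr)
=\bigl(\widetilde\xi(\bar q)-\xi(\bar q)\bigr)-\widetilde\xi(\bar q)\,\delta.
\]
On $J_1$ one has $\tau'(\bar q)=\xi(\bar q)/\widetilde\xi(\bar q)<1$, hence $\widetilde\xi(\bar q)-\xi(\bar q)>0$, and your ``base equals $1$ when $\delta=\epsilon=0$'' claim is false: the base is $e^{\widetilde\xi-\xi}>1$. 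The sum over $p$ then diverges and no choice of $\eta_1$ rescues the first estimate. In short, throwing away $|F_W|$ from the kernel when $\gamma>1$ loses exactly the factor $e^{-r(\widetilde\xi-\xi)}$ that you need.

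What the paper does instead---and what you must do in \emph{all} cases, not only the non-monotone one---is isolate a single copy of $Z_W:=F_W(1)$ sitting inside $F_W(v)-F_W(u)$ and integrate it out using its bounded density (this is where (A3) is used). Concretely, with $r=n\vee p$ one writes
\[
F_W(v)-F_W(u)=W_{v|_r^-}(\varnothing)\cdot Z_W(v|_r^-)+B,
\]
where $Z_W(v|_r^-)$ is independent of $B$, of the indicators $\mathbf 1^{(2)}_{u,v}$, and of $Y_K(u),Y_K(v)$. Conditioning on the complementary $\sigma$-algebra and integrating against the bounded density $f_W$ of $Z_W$ yields, for $\gamma>1$,
\[
\mathbb E\Bigl[\bigl(|A Z_W+B|^2+C_2^2\bigr)^{-\gamma/2}\Bigr]\;\lesssim\;|A|^{-1}\,C_2^{\,1-\gamma},
\qquad |A|\approx e^{-r\xi(\bar q)},\ C_2\approx e^{-r\widetilde\xi(\bar q)},
\]
so the kernel contributes $e^{r\xi}\cdot e^{r\widetilde\xi(\gamma-1)}$ rather than your $e^{r\widetilde\xi\gamma}$; the improvement is precisely $e^{-r(\widetilde\xi-\xi)}$, which restores the correct base $e^{-\widetilde\xi\delta}$. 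Note also that this decomposition is \emph{not} along the greatest common prefix of $u$ and $v$ (which may be empty when $u,v$ lie in adjacent top-level intervals), but along the cylinder $[v|_r^-]$ lying strictly between $\lambda(u)$ and $\lambda(v)$; your top/bottom factorisation $F_U(u)-F_U(v)=U_w(\varnothing)\bigl(F_U^{[w]}(u')-F_U^{[w]}(v')\bigr)$ breaks down exactly in those neighbour cases, and the ``up to neighbour corrections'' remark does not repair it.
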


Now we may choose $m_p=\frac{\kappa_2+\frac{1}{2}\delta\eta_1}{\eta_2}\cdot (n\vee p)$ (by modifying a little $\eta_2$ we can always assume that $\frac{\kappa_2+\frac{1}{2}\delta\eta_1}{\eta_2}>3$) and $\epsilon_\delta=\epsilon_* \wedge\frac{\frac{1}{2}\delta\eta_1\eta_2}{\kappa_1(\kappa_2+\frac{1}{2}\delta\eta_1)}$. Then by using Proposition~\ref{mainprop} and~\eqref{uniupper},~\eqref{IdeltaI}, for any $\delta<\delta_K$, $\bar{q}\in K$, $\epsilon<\epsilon_\delta$ and $S\in\{G,R\}$ we have
\begin{eqnarray*}
&&\mathbb{E}\left(\sup_{q\in U_\epsilon(\bar{q}) }  \mathcal{I}^{S}_{n,\delta}(q,\epsilon)\right)\\
&\le& \sum_{p\ge 2} \left(  \mathbb{E}\left((\mathcal{I}^{S,\delta}_{n,\bar{q},\epsilon})_{p,m_p}\right)+\sum_{m\ge m_p}  \mathbb{E}\left((\Delta \mathcal{I}^{S,\delta}_{n,\bar{q},\epsilon})_{p,m}\right)\right) \\
&\le& \sum_{p\ge 2} C\cdot b^{(n\vee p) -p+1} \cdot\left(  e^{\mathbf{1}_{\{p<n\}}\kappa_1\cdot n}\cdot e^{-\eta_1\delta \cdot (n\vee p)+\kappa_1\epsilon\cdot m_p} + \sum_{m\ge m_p} e^{\kappa_2\cdot (n\vee p)-\eta_2\cdot m} \right)\\
&\le & C\cdot \sum_{p\ge 2}  b^{(n\vee p) -p+1} \cdot\left(  e^{\mathbf{1}_{\{p<n\}}\kappa_1\cdot n}\cdot e^{-\eta_1\delta \cdot (n\vee p)+\kappa_1\cdot \epsilon_\delta\cdot m_p} + e^{\kappa_2\cdot (n\vee p)-\eta_2\cdot m_p}\cdot\frac{1}{1-e^{-\eta_2}} \right)\\
&\le& \frac{C}{1-e^{-\eta_2}}\cdot \sum_{p\ge 2} b^{(n\vee p)-p+1}\cdot e^{\mathbf{1}_{\{p<n\}}\kappa_1\cdot n} \cdot\\
&&\qquad \qquad \ \ \left(e^{-\eta_1\delta \cdot (n\vee p)+\kappa_1\cdot \frac{\frac{1}{2}\delta\eta_1\eta_2}{\kappa_1(\kappa_2+\frac{1}{2}\delta\eta_1)}\cdot \frac{\kappa_2+\frac{1}{2}\delta\eta_1}{\eta_2}\cdot (n\vee p)}+e^{\kappa_2\cdot (n\vee p)-\eta_2\cdot \frac{\kappa_2+\frac{1}{2}\delta\eta_1}{\eta_2}\cdot (n\vee p)} \right)\\
&=& \frac{2C}{1-e^{-\eta_2}}\cdot \sum_{p\ge 2} b^{(n\vee p)-p+1}\cdot e^{\mathbf{1}_{\{p<n\}}\kappa_1\cdot n} \cdot e^{-\frac{1}{2}\delta \eta_1(n\vee p)}\\
&=& \frac{2C}{1-e^{-\eta_2}}\cdot \left(\sum_{p= 1}^{n-1} b^{n-p+1}\cdot e^{\kappa_1\cdot n}\cdot e^{-\frac{1}{2}\delta \eta_1n} + \sum_{p\ge n} b \cdot e^{-\frac{1}{2}\delta \eta_1 p}\right) \\
&=& \frac{2C}{1-e^{-\eta_2}}\cdot \left(b^{n+1}\cdot e^{\kappa_1\cdot n}\cdot e^{-\frac{1}{2}\delta \eta_1n} \cdot \frac{1-b^{-n}}{1-b^{-1}}  + b \cdot e^{-\frac{1}{2}\delta \eta_1 n}\cdot \frac{1}{1-e^{-\frac{1}{2}\delta \eta_1}}\right) <\infty.
\end{eqnarray*}

Since for any $0<\epsilon<\epsilon_\delta$, the family $\{U_\epsilon(\bar{q})\}_{\bar{q}\in K}$ forms an open covering of $K$, there exist $\bar{q}_1,\cdots,\bar{q}_N$ such that $\{U_\epsilon(\bar{q}_i)\}_{1\le i\le N}$ also covers $K$. This gives us the conclusion. \end{proof}

\subsection{Proof of Theorem~\ref{levelsets}.}

\begin{proof}

The proof of Theorem~\ref{levelsets} exploits the main idea developed in~\cite{Mat} to study the dimension of projections and sections of sets.  Some complications come from the fact that we want results holding for uncountably many sets and measures simultaneously. 

Through the proof we use the same notation as in Section~\ref{proofthm2}. Moreover, for $n\ge 1$, $q\in J$ and $\epsilon>0$ we define
$$
G_n(q,\epsilon)=\big\{(F_L(t),F_W(t)): t\in \mathcal{C}_n(q,\epsilon) \big\}
$$
and for $\theta\in (-\pi/2,\pi/2)$ we define $R_{n,\theta}(q,\epsilon)=\mathrm{Proj}_\theta(G_n(q,\epsilon)) \subset l_{\theta}$.

For any $y\in l_{\theta}$ we define the lower derivative of the measure $\mu_{q,\theta}^{R}\big|_{R_{n,\theta}(q,\epsilon)}$ with respect to the one-dimensional Lebesgue measure on $l_\theta$ at $y$:
$$
\underline{D}(\mu_{q,\theta}^{R}\big|_{R_{n,\theta}(q,\epsilon)},y)=\liminf_{r\to 0^+} \frac{1}{r}\cdot \mu_{q,\theta}^{R}\Big(B(y,r)\cap R_{n,\theta}(q,\epsilon)\Big).
$$

We fix a compact subset $K\subset J_1$ (recall Remark~\ref{J12}). For any $\bar{q}\in K$, we can choose $\delta\in(0,\delta_K)$ and $\epsilon_*$ such that the conclusions of Proposition~\ref{mainprop} hold. Notice that for such $\delta$ and $\epsilon\in(0,\epsilon_*)$ we always have $\gamma^G(\bar q)-\delta-\epsilon>1$. 

For $s,t\in\mathscr{A}^*\cup\mathscr{A}^{\mathbb{N}_+}$ recall
\[
\mathcal{K}_{1}(s,t)^{-1}=\left(|F_L(s)-F_L(t)|^2+|F_W(s)-F_W(t)|^2\right)^{\frac{1}{2}} \wedge 1,
\]
and for $\theta\in (-\pi/2,\pi/2)$ and $\gamma>0$ we define
$$
d_{\theta,\gamma}^G(s,t)=\mathcal{K}_{\gamma}(s,t)^{-1}\cdot |\sin(\theta+\theta_{s,t})|,
$$
where $\theta_{{s,t}}$ stands for the angle between $(F_L(t),F_W(t))-(F_L(s),F_W(s))$ and $x$-axis (clockwise). Notice that for any $r>0$ and $\gamma> 1$ we always have
$$\mathbf{1}_{\{d_{\theta,1}^G(s,t)<r\}}\le \liminf_{m\to\infty} \mathbf{1}_{\{d_{\theta,1}^G(s|_m,\,t|_m)<2r\}} \le \liminf_{m\to\infty} \mathbf{1}_{\{d_{\theta,\gamma}^G(s|_m,\,t|_m)<2r\}}.$$

Now, recall~\eqref{Ulambda}. For simplicity we will also use the notation
\begin{equation}\label{Mnq}
M^n_{\bar q,\epsilon}(u,v)=\sup_{q\in U_\epsilon(\bar{q}) } \mu_q([u]^n_{q,\epsilon}) \mu_q([v]^n_{q,\epsilon}).
\end{equation}
Then an integration similar to that used in the proof of Theorem 9.7 of~\cite{Mat}, as well as arguments similar to that used in Section~\ref{proofthm2} yield for any $\gamma>1$ (by using Fatou's lemma repetitively) 
\begin{eqnarray*}
&&\int_{0}^{\pi}\sup_{q\in U_\epsilon(\bar{q})} \int_{y\in R_{n,\theta}(q,\epsilon)}\underline{D}(\mu_{q,\theta}^{R}|_{R_{n,\theta}(q,\epsilon)},y)\mathrm{d} \mu_{q,\theta}^{R}(y) \mathrm{d}\theta \\
&=&\int_{0}^{\pi}\sup_{q\in U_\epsilon(\bar{q})} \int_{y\in R_{n,\theta}(q,\epsilon)}\liminf_{r\to 0^+} \frac{1}{r}\mu_{q,\theta}^{R}\Big(B(y,r)\cap R_{n,\theta}(q,\epsilon)\Big) \mathrm{d} \mu_{q,\theta}^{R}(y) \mathrm{d}\theta \\
&\le &\int_{0}^{\pi}\sup_{q\in U_\epsilon(\bar{q})} \liminf_{r\to 0^+} \frac{1}{r} \int_{y\in R_{n,\theta}(q,\epsilon)} \int_{x\in G_n(q,\epsilon)}\mathbf{1}_{\{ |x-l_{y,\theta}|\le r\}} \mathrm{d}\mu^{G}_q(x)\mathrm{d} \mu_{q,\theta}^{R}(y) \mathrm{d}\theta\\
&=&\int_{0}^{\pi}\sup_{q\in U_\epsilon(\bar{q})} \liminf_{r\to 0^+} \frac{1}{r} \int_{s\in \mathcal{C}_{n}(q,\epsilon)} \int_{t\in \mathcal{C}_n(q,\epsilon)}\mathbf{1}_{\{d^{G}_{\theta,1}(s,t)\le r\}} \mathrm{d}\mu_q(t)\mathrm{d} \mu_{q}(s) \mathrm{d}\theta\\
&\le&\int_{0}^{\pi}\sup_{q\in U_\epsilon(\bar{q})} \liminf_{r\to 0^+} \frac{1}{r}\sum_{p\ge 2} \iint_{s,t\in \mathcal{C}_{n}(q,\epsilon),\atop \mathbf{1}_p(s,t)=1}  \liminf_{m\to\infty} \mathbf{1}_{\{d_{\theta,1}^G(s|_m,t|_m)<2r\}}  \mathrm{d}\mu_q(t)\mathrm{d} \mu_{q}(s) \mathrm{d}\theta\\
&\le & \int_{0}^{\pi}\sup_{q\in U_\epsilon(\bar{q})} \liminf_{r\to 0^+} \frac{1}{r} \sum_{p\ge 2}  \iint_{s,t\in \mathcal{C}_{n}(q,\epsilon),\atop \mathbf{1}_p(s,t)=1 }\liminf_{m\to \infty} \mathbf{1}_{\{d_{\theta,\gamma}^G(s|_m,t|_m)<2r\}}  \mathrm{d}\mu_q(t)\mathrm{d} \mu_{q}(s) \mathrm{d}\theta\\
&\le & \int_{0}^{\pi}\sup_{q\in U_\epsilon(\bar{q})} \liminf_{r\to 0^+} \frac{1}{r} \sum_{p\ge 2}  \liminf_{m\to \infty}  \iint_{s,t\in \mathcal{C}_{n}(q,\epsilon),\atop \mathbf{1}_p(s,t)=1 }\mathbf{1}_{\{d_{\theta,\gamma}^G(s|_m,t|_m)<2r\}}  \mathrm{d}\mu_q(t)\mathrm{d} \mu_{q}(s) \mathrm{d}\theta\\
&\le& \liminf_{r\to 0^+} \frac{1}{r} \sum_{p\ge 2} \int_{0}^{\pi}\sup_{q\in U_\epsilon(\bar{q})} \liminf_{m\to \infty}  \sum_{u,v\in\mathscr{A}^m; \atop \mathbf{1}_p(s,t)=1} \mathbf{1}_{\{d_{\theta,\gamma}^G(u,v)<2r\}} \mu_q([u]^n_{q,\epsilon})\mu_{q}([v]^n_{q,\epsilon}) \mathrm{d}\theta \\
&\le & \liminf_{r\to 0^+} \frac{1}{r}\sum_{p\ge 2} \Bigg( \sum_{u,v\in\mathscr{A}^{m_p}; \atop \mathbf{1}_p(s,t)=1} \int_{0}^{\pi} \mathbf{1}_{\{d_{\theta,\gamma}^G(u,v)<2r\}}  \mathrm{d}\theta \cdot M^n_{\bar q,\epsilon}(u,v)  +  \sum_{m\ge m_p}\\
& & \sum_{u, v\in \mathscr{A}^m,u',v'\in\mathscr{A}; \atop \mathbf{1}_p(u,v)=1} \int_{0}^{\pi} \big|\mathbf{1}_{\{d_{\theta,\gamma}^G(uu',vv')<2r\}}-\mathbf{1}_{\{d_{\theta,\gamma}^G(u,v)<2r\}}\big|\mathrm{d}\theta  \cdot M^n_{\bar q,\epsilon}(uu',vv') \Bigg)\\
&\le & \sum_{p\ge 2} \Bigg( \sum_{u,v\in\mathscr{A}^{m_p}; \atop \mathbf{1}_p(u,v)=1}  \limsup_{r\to 0^+} \frac{1}{r} \int_{0}^{\pi} \mathbf{1}_{\{d_{\theta,\gamma}^G(u,v)<2r\}}  \mathrm{d}\theta \cdot M^n_{\bar q,\epsilon}(u,v) + \sum_{m\ge m_p}  \\
& & \sum_{u, v\in \mathscr{A}^m,u',v'\in\mathscr{A}; \atop \mathbf{1}_p(u,v)=1}  \limsup_{r\to 0^+} \frac{1}{r} \int_{0}^{\pi}\big|\mathbf{1}_{\{d_{\theta,\gamma}^G(uu',vv')<2r\}}-\mathbf{1}_{\{d_{\theta,\gamma}^G(u,v)<2r\}} \big|\mathrm{d}\theta \cdot M^n_{\bar q,\epsilon}(uu',vv')\Bigg),
\end{eqnarray*}
where $m_p$ is taken as in the proof of Theorem~\ref{lower}.

Notice that there exists a universal constant $C'>0$ such that for all $r>0$,
$$
\int_{0}^{\pi}  \mathbf{1}_{\{d_{\theta,\gamma}^G(w,u)\le2r\}}  \mathrm{d}\theta=\int_{0}^{\pi}  \mathbf{1}_{\{|\sin(\theta+\theta_{u,v})|\le 2r\cdot \mathcal{K}_\gamma(u,v)\}}  \mathrm{d}\theta \le C'\cdot r\cdot \mathcal{K}_\gamma(u,v)
$$
and
$$
\int_{0}^{\pi}\big|\mathbf{1}_{\{d_{\theta,\gamma}^G(uu',vv')\le2r\}}-\mathbf{1}_{\{d_{\theta,\gamma}^G(u,v)\le2r\}}\big|\mathrm{d}\theta\le C'\cdot r\cdot |\mathcal{K}_\gamma(uu',vv')-\mathcal{K}_\gamma(u,v)|.
$$
Thus by taking $\gamma=\gamma^G(\bar q)-\delta-\epsilon>1$, we deduce from Proposition~\ref{mainprop} that
\begin{eqnarray*}
&&\mathbb{E}\left (\int_{0}^{\pi}\sup_{q\in U_\epsilon(\bar{q})} \int_{y\in R_{n,\theta}(q,\epsilon)}\underline{D}(\mu_{q,\theta}^{R}\big|_{R_{n,\theta}(q,\epsilon)},y)\mathrm{d} \mu_{q,\theta}^{R}(y) \mathrm{d}\theta\right ) \\
&\le& C'\cdot \mathbb{E}\left (\sum_{p\ge 2} (\mathcal{I}^{S,\delta}_{n,\bar{q},\epsilon})_{p,m_p}+ \sum_{m\ge m_p} (\Delta \mathcal{I}^{S,\delta}_{n,\bar{q},\epsilon})_{p,m}\right )<\infty.
\end{eqnarray*}
Then by using the same argument as in Section~\ref{proofthm2} we can conclude that with probability 1, for Lebesgue almost every $\theta\in(-\pi/2,\pi/2)$, for all $q\in J_1$, for $\mu_{q,\theta}^{R}$-almost every $y\in l_{\theta}$, the lower derivative $\underline{D}(\mu_{q,\theta}^{R},y)$ is finite, which is equivalent to saying that $\mu_{q,\theta}^{R}$ is absolutely continuous with respect to the one-dimensional Lebesgue measure on $l_{\theta}$. This ensures that for $\mu_{q,\theta}^{R}$-almost every $y\in l_{\theta}$, the following limit:
\begin{equation*}
\lim_{r\to 0^+}\frac{1}{r} \int_{|x-l_{y,\theta}^\perp |\le r} \psi(x) \ \mathrm{d} \mu_q^{G}(x) 
\end{equation*}
exists for any continuous function $\psi:\mathbb{R}^2\mapsto \mathbb{R}_+$ and thus defines a measure $\mu^{y}_{q,\theta}$ carried by $l_{y,\theta}^\perp \cap \{(F_L(t),F_W(t)):t\in  \mathcal{C}(q) \}$.

Now for the lower bound of the Hausdorff dimension of $\mu^{y}_{q,\theta}$, we notice that like in Theorem 10.7 of~\cite{Mat}, we have the following equality for the $\gamma^G(q)-\delta-1$ energy
\begin{eqnarray*}
&& \mathcal{I}_{q,\theta}\\
&:=&\int_{y\in R_{n,\theta}(q,\epsilon)}\iint_{x_1,x_2\in l_{y,\theta}^\perp\cap G_n(q,\epsilon)} 1\vee |x_1-x_2|^{-\gamma^G(q)-\delta-1} \mathrm{d} \mu_{q,\theta}^{y}(x_1)\mathrm{d} \mu_{q,\theta}^{y}(x_2)\mathrm{d} \mathrm{Leb}_\theta(y)\\
&=&\liminf_{r\to 0^+} \frac{1}{r} \iint_{s,t\in \mathcal{C}_{n}(q,\epsilon)}\mathbf{1}_{\{d^{G}_{\theta,1}(s,t)\le r\}} d^{G}(s,t)^{-\gamma^G(q)-\delta-1}\mathrm{d}\mu_q(t)\mathrm{d} \mu_{q}(s),
\end{eqnarray*}
where $\mathrm{Leb}_\theta$ stands for the one-dimensional Lebesgue measure on $l_{\theta}$. By using the same method as above to establish the finiteness of $\underline{D}(\mu_{q,\theta}^{R},y)$, we can show that
\begin{multline*}
\int_{0}^{\pi}\sup_{q\in U_\epsilon(\bar{q})}  \mathcal{I}_{q,\theta} \ \mathrm{d}\theta\\
\le \sum_{p\ge 2} \Big( \sum_{u,v\in\mathscr{A}^{m_p}; \atop \mathbf{1}_p(u,v)=1}  \limsup_{r\to 0^+} \frac{1}{r} \int_{0}^{\pi}  \mathbf{1}_{\{d_{\theta,1}^G(u,v)\le2r\}}\mathcal{K}_{\widetilde{\gamma}}(u,v)\mathrm{d}\theta \cdot M^n_{\bar q,\epsilon}(u,v)+\sum_{m\ge m_p}  \sum_{u,v\in \mathscr{A}^m;u',v'\in\mathscr{A}; \atop \mathbf{1}_p(u,v)=1} \\
\limsup_{r\to 0^+} \frac{1}{r} \int_{0}^{\pi}\big|\mathbf{1}_{\{d_{\theta,1}^G(uu',vv')\le2r\}}\mathcal{K}_{\widetilde{\gamma}}(uu',vv')-\mathbf{1}_{\{d_{\theta,1}^G(w,u)\le2r\}}\mathcal{K}_{\widetilde{\gamma}}(u,v)\big|\mathrm{d}\theta \cdot M^n_{\bar q,\epsilon}(u,v) \Big),
\end{multline*}
where $\widetilde{\gamma}=\gamma-1=\gamma^G(\bar{q})-\delta-\epsilon-1>0$. 

\medskip

Notice that for the same universal constant $C'$, we have
$$
\int_{0}^{\pi}  \mathbf{1}_{\{d_{\theta,1}^G(u,v)\le2r\}}\mathcal{K}_{\widetilde{\gamma}}(u,v) \mathrm{d}\theta \le C'\cdot r\cdot \mathcal{K}_{\widetilde{\gamma}+1}(u,v)=C'\cdot r\cdot \mathcal{K}_{\gamma}(u,v)
$$
and
\begin{eqnarray*}
&&\int_{0}^{\pi}\big|\mathbf{1}_{\{d_{\theta,1}^G(uu',vv')\le2r\}}\mathcal{K}_{\widetilde{\gamma}}(uu',vv')-\mathbf{1}_{\{d_{\theta,1}^G(u,v)\le2r\}}\mathcal{K}_{\widetilde{\gamma}}(u,v)\big|\mathrm{d}\theta \\
&\le & \Big(\mathcal{K}_{\widetilde{\gamma}}(uu',vv') \vee \mathcal{K}_{\widetilde{\gamma}}(u,v) \Big)\cdot \int_{0}^{\pi}\big|\mathbf{1}_{\{d_{\theta,1}^G(uu',vv')\le2r\}}-\mathbf{1}_{\{d_{\theta,1}^G(u,v)\le2r\}}\big|\mathrm{d}\theta\\
&& \qquad +\ | \mathcal{K}_{\widetilde{\gamma}}(uu',vv')-\mathcal{K}_{\widetilde{\gamma}}(u,v) |\int_{0}^{\pi} \mathbf{1}_{\{d_{\theta,1}^G(uu',vv')\le2r\}}\vee \mathbf{1}_{\{d_{\theta,1}^G(u,v)\le2r\}} \mathrm{d}\theta\\
&\le& C'\cdot r\cdot \Delta \mathcal{K}_{\widetilde{\gamma}}(uu',vv'),
\end{eqnarray*}
where
\begin{multline*}
\Delta \mathcal{K}_{\widetilde{\gamma}}(uu',vv')=
\Big(\mathcal{K}_{\widetilde{\gamma}}(uu',vv')\vee \mathcal{K}_{\widetilde{\gamma}}(u,v) \Big)\cdot |\mathcal{K}_1(uu',vv')-\mathcal{K}_{1}(u,v)| \\
 +\Big(\mathcal{K}_{1}(uu',vv')\vee \mathcal{K}_{1}(u,v) \Big)\cdot |\mathcal{K}_{\widetilde{\gamma}}(uu',vv')-\mathcal{K}_{\widetilde{\gamma}}(u,v)|.
\end{multline*}
Then
\begin{equation}\label{boundenergy}
\mathbb{E}\left (\int_{0}^{\pi}\sup_{q\in U_\epsilon(\bar{q})}  \mathcal{I}_{q,\theta} \ \mathrm{d}\theta \right ) 
\le \mathbb{E}\left (\sum_{p\ge 2} (\mathcal{I}^{S,\delta}_{n,\bar{q},\epsilon})_{p,m_p}+ \sum_{m\ge m_p} \widetilde{(\Delta \mathcal{I}^{S,\delta}_{n,\bar{q},\epsilon})}_{p,m}\right )<\infty,
\end{equation}
where
\[
\widetilde{(\Delta \mathcal{I}^{S,\delta}_{n,\bar{q},\epsilon})}_{p,m}=\sum_{u,v\in \mathscr{A}^m;u',v'\in\mathscr{A}; \atop \mathbf{1}_p(u,v)=1} \Delta \mathcal{K}_{\widetilde{\gamma}}(uu',vv') \sup_{q\in U_\epsilon(\bar{q}) } \mu_q([uu']^n_{q,\epsilon}) \mu_q([vv']^n_{q,\epsilon}).
\]
The justification of the finiteness in \eqref{boundenergy} is postponed to Remark~\ref{diffkernel2}.  

Now, by using the same arguments as in Section~\ref{proofthm2} again, we deduce that, with probability 1, for Lebesgue-almost every $\theta\in (-\pi/2,\pi/2)$, for all $q\in J_1$, for $\mu_{q,\theta}^{R}$-almost every $y\in l_{\theta}$, we have
$$\dim_H (\mu^{y}_{q,\theta})\ge \dim_H (\mu_q^G)-1=\tau^*(\tau'(q))-\tau'(q).$$
Then we get the conclusion by applying Theorem~\ref{upper}(b) to the measure $\mu_{q,\theta}^{R}$ since we know that the for $\mu_{q,\theta}^{R}$-almost every $y\in l_{\theta}$, the lower local dimension $h_{\mu_{q,\theta}^{R}}(y)$ is equal to $1$. \end{proof}

\section{Proof of Proposition~\ref{mainprop}.} \label{forth}

\subsection{Main Proof.}

\begin{proof}

Due to~\eqref{Ulambda} we always have
$$\bigcup_{q\in U_\epsilon(\bar{q}) } \mathcal{C}_n(q,\epsilon) \subset \mathcal{C}_n(\bar{q},2\epsilon).$$

Let $\rho_K=1\vee \sup_{q\in K}\{|q|+\xi(q)+\widetilde{\xi}(q)+|\tau(q)|+\gamma(q)\}<\infty$. 

Due to~\eqref{Mnq},~\eqref{selfsimimeasure},~\eqref{indicator},~\eqref{Ulambda} and~\eqref{Y_K} we have
\begin{eqnarray*}
M^n_{\bar q,\epsilon}(u,v)&=&\sup_{q\in U_\epsilon(\bar{q}) } \mu_q([u]^n_{q,\epsilon}) \mu_q([v]^n_{q,\epsilon})\\
&=&\sup_{q\in U_\epsilon(\bar{q}) } \mathbf{1}_{\{[u]^n_{q,\epsilon}\neq\emptyset\}}\mathbf{1}_{\{[v]^n_{q,\epsilon}\neq\emptyset\}} |W_{q,u}(\varnothing)||W_{q,v}(\varnothing)| Y_q(u)Y_q(v)\\
&\le&\sup_{q\in U_\epsilon(\bar{q}) } \mathbf{1}_{\{[u]^n_{q,\epsilon},[v]^n_{q,\epsilon}\neq\emptyset\}}  e^{-(|u|+|v|)(\gamma(q)-2\rho_K\epsilon)}Y_q(u)Y_q(v)\\
&\le&  \mathbf{1}_{\{[u]^n_{\bar{q},2\epsilon},[v]^n_{\bar{q},2\epsilon}\neq\emptyset\}}  e^{-(|u|+|v|)(\gamma(\bar{q})-4\rho_K\epsilon)} Y_K(u)Y_K(v).
\end{eqnarray*}
This gives us 
\begin{multline}
(\mathcal{I}^{S,\delta}_{n,\bar{q},\epsilon})_{p,m} \le e^{-2m(\gamma(\bar{q})-4\rho_K\epsilon)} \cdot \\
 \sum_{u,v\in \mathscr{A}^m, \mathbf{1}_p(u,v)=1} \mathcal{K}^{S}_{\bar q,\delta+\epsilon}(u,v)\cdot \mathbf{1}_{\{[u]^n_{\bar{q},2\epsilon},[v]^n_{\bar{q},2\epsilon}\neq\emptyset\}} Y_K(u)Y_K(v), \label{S}
\end{multline}

\begin{multline}
(\Delta I^{S,\delta}_{n,\bar{q},\epsilon})_{p,m} \le e^{-2(m+1)(\gamma(\bar{q})-4\rho_K\epsilon)}\cdot \sum_{u,v\in \mathscr{A}^m; u',v'\in\mathscr{A}; \mathbf{1}_p(u,v)=1}\\
\big|\mathcal{K}^{S}_{\bar q,\delta+\epsilon}(uu',vv')-\mathcal{K}^{S}_{\bar q,\delta+\epsilon}(u,v)\big|\cdot\mathbf{1}_{\{[uu']^n_{\bar{q},2\epsilon},[vv']^n_{\bar{q},2\epsilon}\neq\emptyset\}} Y_K(uu')Y_K(vv').\label{T}
\end{multline}

Now we deal with the individual terms of the above sums.

Fix $p$ and $n$ in $\mathbb{N}_+$, let $r=p\vee n$, and fix $m\ge 3r$. 

Fix a pair $u,v\in\mathscr{A}^m$ with $\mathbf{1}_p(u,v)=1$, so $|\lambda(u)-\lambda(v)|\in[b^{-p+1},b^{-p+2})$.

Without loss of generality we suppose that $\lambda(u)<\lambda(v)$.

Since $\mathbf{1}_p(u,v)=1$, we have $\lambda(u)<\lambda(v|_{p}^-) \le \lambda(v|_r^-)$.

Let
\begin{equation}\label{VdeltaV}
\left\{
\begin{array}{l}
V:=\mathcal{K}^{S}_{\bar q,\delta+\epsilon}(u,v)\mathbf{1}_{\{[u]^n_{\bar{q},2\epsilon},[v]^n_{\bar{q},2\epsilon}\neq\emptyset\}} Y_K(u)Y_K(v); \\
\ \\
\Delta V:=\big|\mathcal{K}^{S}_{\bar q,\delta+\epsilon}(uu',vv')-\mathcal{K}^{S}_{\bar q,\delta+\epsilon}(u,v)\big|\mathbf{1}_{\{[uu']^n_{\bar{q},2\epsilon},[vv']^n_{\bar{q},2\epsilon}\neq\emptyset\}} Y_K(uu')Y_K(vv').
\end{array}
\right.
\end{equation}

\smallskip

Let us state two elementary claims.

\medskip

\textbf{Claim 1.} Recall that $[u]^n_{\bar{q},2\epsilon}=[u]\cap \mathcal{C}_n(\bar{q},2\epsilon)$. Due to~\eqref{indicator} and~\eqref{Xin}, if $[u]^n_{\bar{q},2\epsilon}\neq\emptyset$, then for $l=r,\cdots,m$ we have
$\mathbf{1}_{\mathscr{W}_{u|_l}(\bar{q},2\epsilon)}\cdot \mathbf{1}_{\mathscr{L}_{u|_l}(\bar{q},2\epsilon)}\cdot \mathbf{1}_{\mathscr{O}_{u|_l}(\epsilon)}=1$. Define
\begin{eqnarray}
\mathbf{1}^{(1)}_{u,v}(\bar q,\epsilon) &=& \mathbf{1}_{\left\{O_{F_W}(I_{u|_{r}})\vee O_{F_W}(I_{v|_{r}}) \le e^{-r(\xi(\bar{q})-6\epsilon)} ,\ O_{F_L}(I_{v|_{r}^-}) \ge e^{-r(\widetilde{\xi}(\bar{q})+6\epsilon)}\right\}}, \label{chi1}\\
\mathbf{1}^{(2)}_{u}(\bar q,\epsilon) &=& \mathbf{1}_{\mathscr{W}_{u|_r}(\bar{q},2\epsilon)}\mathbf{1}_{\mathscr{L}_{u|_r}(\bar{q},2\epsilon)}  \cdot \mathbf{1}_{\mathscr{W}_{u}(\bar{q},2\epsilon)} \mathbf{1}_{\mathscr{L}_{u}(\bar{q},2\epsilon)}\mathbf{1}_{\mathscr{O}_{u}(\epsilon)},\label{chi2}
\end{eqnarray}
and $\mathbf{1}^{(2)}_{u,v}(\bar q,\epsilon)=\mathbf{1}^{(2)}_{u}(\bar q,\epsilon)\cdot\mathbf{1}^{(2)}_{v}(\bar q,\epsilon)$. Also, $[uu']^n_{\bar{q},2\epsilon}\neq \emptyset$ implies $[u]^n_{\bar{q},2\epsilon}\neq\emptyset$. Then, due to~\eqref{osc1} and~\eqref{indicLWO}, we have
\begin{equation*}
\mathbf{1}_{\{[uu']^n_{\bar{q},2\epsilon},[vv']^n_{\bar{q},2\epsilon}\neq\emptyset\}}\le \mathbf{1}_{\{[u]^n_{\bar{q},2\epsilon},[v]^n_{\bar{q},2\epsilon}\neq\emptyset\}} \le \mathbf{1}^{(1)}_{u,v}(\bar q,\epsilon) \cdot \mathbf{1}^{(2)}_{u,v}(\bar q,\epsilon).
\end{equation*}

\medskip

\textbf{Claim 2.} For $w\in\mathscr{A}^*$ we define $Z_W(w)=F_W^{[w]}(1)$. Then from~(\ref{WL}) we have
\begin{equation}\label{ZW}
F_W(\lambda(w)+b^{-|w|})-F_W(\lambda(w))=W_{w}(\varnothing)\cdot Z_W(w), \ \ \forall \  w\in \mathscr{A}^*.
\end{equation}
Due to~\eqref{ZW} we have
\begin{eqnarray*}
&&F_W(v)-F_W(u) \\
&=& F_W(v)-F_W(v|_r)+F_W(v|_r)-F_W(v|_r^-)+F_W(v|_r^-)-F_W(u)\\
&=& W_{v|_r^-}(\varnothing)\cdot Z_W(v|_r^-)+ F_W(v|_r)-F_W(v|_r^-)+F_W(v|_r^-)-F_W(u).
\end{eqnarray*}

By construction we have $Z_W(v|_r^-)$ is measurable with respect to
\[
\mathcal{A}(v|_r^-):=\sigma\big((W,L)(v|_r^-\cdot w): w\in\mathscr{A}^*\big)
\]
and is independent of
$$\mathcal{A}^c(v|_r^-) :=\sigma\big((W,L)(w): w\in\mathscr{A}^*, |w|<r \text{ or } w|_r\neq w|_r^-\big).$$
Also due to the statistical self-similarity~\eqref{WL} we have
\[
\sigma
\left(\left\{\begin{array}{l}
W_{v|_r^-},F_W(v|_r)-F_W(v|_r^-)+F_W(v|_r^-)-F_W(u),\\
\mathbf{1}^{(2)}_{u,v}(\bar q,\epsilon),Y_K(u), Y_K(uu'),Y_K(v),Y_K(vv')
\end{array}
\right.\right) \subset \mathcal{A}^c(u|_r^-).
\]

\medskip

Now, due to \textbf{Claim 1} and~\eqref{VdeltaV} we have
\begin{eqnarray*}
V &\le& \overline{\mathcal{K}^{S}_{\bar q,\delta+\epsilon}}(u,v) \cdot \mathbf{1}^{(2)}_{u,v}(\bar q,\epsilon) \cdot Y_K(u)Y_K(v), \ \text{and}\\
\Delta V &\le& \overline{\Delta \mathcal{K}^{S}_{\bar q,\delta+\epsilon}}(uu',vv') \cdot \mathbf{1}^{(2)}_{u,v}(\bar q,\epsilon) \cdot Y_K(uu')Y_K(vv'),
\end{eqnarray*}
where
\begin{equation}\label{kdeltak}
\left\{
\begin{array}{l}
\overline{\mathcal{K}^{S}_{\bar q,\delta+\epsilon}}(u,v)=\mathcal{K}^{S}_{\bar q,\delta+\epsilon}(u,v)\cdot \mathbf{1}^{(1)}_{u,v}(\bar q,\epsilon); \\
\ \\
\overline{\Delta \mathcal{K}^{S}_{\bar q,\delta+\epsilon}}(uu',vv')=\big|\mathcal{K}^{S}_{\bar q,\delta+\epsilon}(uu',vv')-\mathcal{K}^{S}_{\bar q,\delta+\epsilon}(u,v)\big|\cdot \mathbf{1}^{(1)}_{u,v}(\bar q,\epsilon).
\end{array}
\right.
\end{equation}

Then due to \textbf{Claim 2} we have
\begin{equation}
\mathbb{E}\Big(V\Big| \mathcal{A}^c(v|_r^-)\Big) \le \mathbb{E}\Big( \overline{\mathcal{K}^{S}_{\bar q,\delta+\epsilon}}(u,v) \Big|\mathcal{A}^c(v|_r^-)\Big) \cdot \mathbf{1}^{(2)}_{u,v}(\bar q,\epsilon) \cdot Y_K(u) Y_K(v), \label{condiineq1}
\end{equation}
\begin{equation}
\mathbb{E}\Big(\Delta V \Big|\mathcal{A}^c(v|_r^-)\Big) \le \mathbb{E}\Big( \overline{\Delta \mathcal{K}^{S}_{\bar q,\delta+\epsilon}}(uu',vv') \Big|\mathcal{A}^c(v|_r^-)\Big) \cdot \mathbf{1}^{(2)}_{u,v}(\bar q,\epsilon) \cdot Y_K(uu')Y_K(vv').\label{condiineq2}
\end{equation}

Recall in Remark~\ref{J12} we distinguished the cases $K\subset J_i$, $i=1,2,3$ according to whether or not the corresponding power on the kernel is greater than $1$. Then, due to~\eqref{kernel}, once we have taken $\delta<\delta_K$ and $\epsilon<\epsilon_K$, only two situations are left:
\begin{equation*}
\mathcal{K}^{S}_{\bar q,\delta+\epsilon}(u,v)=\left\{
\begin{array}{ll}
(|F_L(u)-F_L(v)|^2+|F_W(u)-F_W(v)|^2)^{-\gamma/2} \vee 1, & \textrm{ if } \gamma> 1;\\
\ \\
|F_W(u)-F_W(v)|^{-\gamma} \vee 1, & \textrm{ if }\gamma< 1,
\end{array}
\right.
\end{equation*}
where $\gamma=\gamma^{S}(\bar{q})-\delta-\epsilon$.

We have the following lemma, whose proof is given in Section~\ref{proofpropker}.

\begin{lemma}\label{kernelcondi}
There exists a constant $C_\gamma$ such that
\begin{multline*}
\mathbf{1}^{(2)}_{w,u}(\bar q,\epsilon)\cdot  \mathbb{E}\Big( \overline{\mathcal{K}^{S}_{\bar q,\delta+\epsilon}}(u,v) \Big|\mathcal{A}^c(v|_r^-)\Big) \\
\le C_\gamma \cdot \mathbf{1}^{(2)}_{w,u}(\bar q,\epsilon)\cdot \left\{
\begin{array}{ll}
e^{r(\xi(\bar{q})+6\epsilon-(\widetilde{\xi}(\bar{q})+6\epsilon)(1-\gamma))}, & \text{ if } \gamma>1 \\
e^{r(\xi(\bar{q})+6\epsilon-\mathbf{1}_{\{p\ge n\}}\cdot(\xi(\bar{q})-6\epsilon)(1-\gamma))}, & \text{ if } \gamma<1
\end{array}
\right.
\end{multline*}
\begin{multline*}
\mathbf{1}^{(2)}_{w,u}(\bar q,\epsilon)\cdot \mathbb{E}\Big( \overline{\Delta \mathcal{K}^{S}_{\bar q,\delta+\epsilon}}(uu',vv') \Big|\mathcal{A}^c(v|_r^-)\Big) \\
\le C_\gamma \cdot \mathbf{1}^{(2)}_{w,u}(\bar q,\epsilon) \cdot\left\{
\begin{array}{ll}
e^{r(\widetilde{\xi}(\bar{q})+6\epsilon)(1+\gamma)-m(\xi(\bar{q})-6\epsilon)}, & \text{ if } \gamma>1 \\
e^{r(\xi(\bar{q})+6\epsilon)-m(\xi(\bar{q})-6\epsilon)(1-\gamma)}, & \text{ if } \gamma<1
\end{array}
\right.
\end{multline*}
\end{lemma}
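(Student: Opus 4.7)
The plan is to marry the conditional decomposition from \textbf{Claim~2} with the bounded density of $F_W(1)$, a consequence of $\varphi_W$ being finite on $\mathbb{R}$ in assumption (A3). Write $F_W(v)-F_W(u)=aZ+C$ with $a=W_{v|_r^-}(\varnothing)$, $Z=Z_W(v|_r^-)$, and $C=(F_W(v)-F_W(v|_r))+(F_W(v|_r^-)-F_W(u))$. By \textbf{Claim~2}, $a$ and $C$ are $\mathcal{A}^c(v|_r^-)$-measurable while $Z$ is independent of $\mathcal{A}^c(v|_r^-)$ with density bounded by some $C_\rho<\infty$. Two deterministic bounds must be extracted from the indicators. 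On $\mathbf{1}^{(2)}_{u,v}$, the identity $v|_r^-=(v|_r)^-$ combined with $\mathscr{W}_{v|_r}(\bar q,2\epsilon)$ gives $|a|\in[e^{-r(\xi(\bar q)+2\epsilon)},e^{-r(\xi(\bar q)-2\epsilon)}]$; from the chain $\lambda(u)<\lambda(v|_r^-)<\lambda(v|_r)\le\lambda(v)$ (a consequence of $\mathbf{1}_p(u,v)=1$ and $r\ge p$) combined with monotonicity of $F_L$, the event $\mathbf{1}^{(1)}_{u,v}$ yields $|F_L(v)-F_L(u)|\ge O_{F_L}(I_{v|_r^-})\ge e^{-r(\widetilde{\xi}(\bar q)+6\epsilon)}$. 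Crucially $\mathbf{1}^{(2)}_{u,v}$ is itself $\mathcal{A}^c(v|_r^-)$-measurable, so it may be pulled out of the conditional expectation.

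For the kernel bound I treat the two regimes separately. In the case $\gamma>1$, setting $A:=|F_L(v)-F_L(u)|$ and $B:=F_W(v)-F_W(u)$, interpolate $(A^2+B^2)^{-\gamma/2}\le A^{-(\gamma-1+\eta)}|B|^{-(1-\eta)}$ for a small parameter $\eta>0$; the $A$-factor is bounded on $\mathbf{1}^{(1)}_{u,v}$ by $e^{r(\widetilde{\xi}(\bar q)+6\epsilon)(\gamma-1+\eta)}$, and the $B$-factor is integrated out via the bounded density of $Z$: splitting $\{|aZ+C|<1\}$ versus $\{|aZ+C|\ge 1\}$ and substituting $y=az+C$ on the near-singular part gives
$$
\mathbb{E}\bigl[|aZ+C|^{-(1-\eta)}\bigm|\mathcal{A}^c(v|_r^-)\bigr]\le 1+\frac{2C_\rho}{\eta\,|a|}\le C_\eta\,e^{r(\xi(\bar q)+2\epsilon)}.
$$
Choosing $\eta$ small compared with $\epsilon$ so that $(\widetilde{\xi}(\bar q)+6\epsilon)\eta$ is absorbed into the $6\epsilon$-margin, the combined exponent collapses to $r(\xi(\bar q)+6\epsilon-(\widetilde{\xi}(\bar q)+6\epsilon)(1-\gamma))$ after accounting for the $\vee 1$ truncation. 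In the case $\gamma<1$ only $B$ appears in the kernel; I exploit the scale-invariance $|aZ+C|^{-\gamma}=|a|^{-\gamma}|Z+C/a|^{-\gamma}$ together with the translation-invariant bound $\sup_{\beta\in\mathbb{R}}\mathbb{E}[|Z+\beta|^{-\gamma}]\le C_\gamma$ (finite because $\gamma<1$ while $\rho_W$ is both bounded and of unit mass), giving $\mathbb{E}[|B|^{-\gamma}\mid\mathcal{A}^c(v|_r^-)]\le C_\gamma\,e^{r\gamma(\xi(\bar q)+2\epsilon)}$; when $p\ge n$ so $r=p$ this matches the stated exponent, while when $p<n$ the same bound trivially fits under the looser target $e^{r(\xi(\bar q)+6\epsilon)}$.

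The $\Delta\mathcal{K}$ estimate adds a mean-value argument on top of the preceding analysis. On $\mathbf{1}^{(2)}_{u,v}$, the oscillations $O_{F_W}(I_{uu'})$ and $O_{F_L}(I_{uu'})$ (and their $vv'$-analogues) are controlled by $e^{-m(\xi(\bar q)-6\epsilon)}$ and $e^{-m(\widetilde{\xi}(\bar q)-6\epsilon)}$ through $\mathscr{W}_{u}(\bar q,2\epsilon)\cap\mathscr{O}_u(\epsilon)$ at level $m$. Differentiating the smooth kernel $\mathcal{K}_\gamma$ and bounding $\partial\mathcal{K}_\gamma/\partial(F_W)$ in terms of a kernel with exponent raised by one produces a bound of the form $e^{-m(\xi(\bar q)-6\epsilon)}\cdot A^{-(\gamma+\eta)}|B|^{-(1-\eta)}$ when $\gamma>1$, while for $\gamma<1$ either a similar differentiation or the H\"older continuity of $x\mapsto|x|^{-\gamma}$ with exponent $1-\gamma$ produces the stated extra factor $e^{-m(\xi(\bar q)-6\epsilon)(1-\gamma)}$ (the fractional exponent reflecting the loss of integrability when the kernel exponent is below one). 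The conditional expectation of the remainder is then handled exactly as in the preceding paragraph. The principal technical challenge is calibrating the interpolation parameter $\eta$ so that the competing $r(\widetilde{\xi}+6\epsilon)(\gamma-1+\eta)$ and $r(\xi+2\epsilon)$ contributions combine into precisely the announced exponent while keeping $\eta$-dependent constants absorbed into $C_\gamma$; a secondary subtlety is verifying that the bottleneck derivative in $\Delta\mathcal{K}$ acts along the $F_W$-direction, so that $-m\xi$ (and not $-m\widetilde{\xi}$) appears in the final bound.
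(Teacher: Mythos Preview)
Your setup is right—the bounded density of $Z_W=F_W(1)$ together with the decomposition $F_W(v)-F_W(u)=aZ+C$ from \textbf{Claim~2} is exactly the engine the paper uses—but the way you extract the exponents in the case $\gamma>1$ differs from the paper and introduces a defect. You split $(A^2+B^2)^{-\gamma/2}\le A^{-(\gamma-1+\eta)}|B|^{-(1-\eta)}$ and then integrate $|B|^{-(1-\eta)}$ against the density, picking up a constant $C_\eta\sim 1/\eta$; to land on the stated exponent you then force $\eta\lesssim\epsilon$, so your ``$C_\gamma$'' actually blows up as $\epsilon\to 0$. Proposition~\ref{mainprop} requires a constant $C=2C_\gamma C_K^2b^2$ uniform in $\epsilon\in(0,\epsilon_*]$, so this is a genuine gap (even if it is not fatal for the final finiteness statement in Theorem~\ref{lower}, where $\epsilon$ is eventually fixed). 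The paper avoids the auxiliary parameter entirely: it replaces $|F_L(v)-F_L(u)|$ by its lower bound $C_2=e^{-r(\widetilde\xi(\bar q)+6\epsilon)}$ \emph{inside} the kernel, then performs the single change of variable $y=(ax+C)/C_2$ in $\int f_W(x)\,[(ax+C)^2+C_2^2]^{-\gamma/2}\,\mathrm{d}x$, yielding directly $C_W\,|a|^{-1}C_2^{1-\gamma}\int_{\mathbb R}(y^2+1)^{-\gamma/2}\,\mathrm{d}y$ with a constant that is continuous in $\gamma$ on compacta away from $1$.

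Two further deviations are worth noting. For $\overline{\Delta\mathcal K}$ with $\gamma>1$ the paper does \emph{not} take a conditional expectation at all: the bound is pointwise, via $|\partial_s g_\gamma|\vee|\partial_t g_\gamma|\le\gamma\,|F_L(v)-F_L(u)+t|^{-\gamma-1}\le\gamma\,(C_2-|D_2|)^{-\gamma-1}$, giving immediately $\overline{\Delta\mathcal K}\le 2(C_2-|D_2|)^{-\gamma-1}(|D_1|+|D_2|)$; your proposed route through the density is unnecessary here. For $\gamma<1$, your appeal to ``H\"older continuity of $x\mapsto|x|^{-\gamma}$ with exponent $1-\gamma$'' is false as a pointwise statement (the function is unbounded near $0$); what actually produces the factor $|D_1|^{1-\gamma}$ is the scaling substitution $y=(ax+C)/D_1$ inside the integral $\int f_W(x)\,\big||ax+C+D_1|^{-\gamma}-|ax+C|^{-\gamma}\big|\,\mathrm{d}x$, which the paper carries out to get $C_W|a|^{-1}|D_1|^{1-\gamma}\int_{\mathbb R}\big||y+1|^{-\gamma}-|y|^{-\gamma}\big|\,\mathrm{d}y$ (finite for $\gamma\in(0,1)$). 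Your $\gamma<1$ kernel bound via $|a|^{-\gamma}\sup_\beta\mathbb E[|Z+\beta|^{-\gamma}]$ is correct and in fact slightly sharper than the paper's, which instead truncates at $|ax+C|\le C_1$ before integrating.
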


To complete the proof, it remains to count the average number of pairs $(u,v)$ in $(\mathscr{A}^m)^2$ such that $\mathbf{1}_p(u,v)=1$ and $\mathbf{1}^{(2)}_{u,v}(\bar q,\epsilon)=1$. This is done in the next lemma, whose proof is given in Section~\ref{lemcounting}.

\begin{lemma}\label{counting}
For $m\ge 3r$, we have
$$
\mathbb{E}\Big(\sum_{u,v\in \mathscr{A}^m}  \mathbf{1}_p(u,v)\cdot \mathbf{1}^{(2)}_{u,v}(\bar q,\epsilon)\Big) \le 2b^{r-p+1}\cdot e^{(2m-r)(\gamma(\bar{q})+8\rho_K\epsilon)}.
$$
\end{lemma}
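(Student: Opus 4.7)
The plan is to bound $\mathbf{1}^{(2)}_{u,v}(\bar q,\epsilon)$ by a product of tilted cascade weights of the form $W_{\bar q,\cdot}(\varnothing)$, and then exploit an independence property that is forced by the hypothesis $\mathbf{1}_p(u,v)=1$ together with $r\geq p$. The exponent $(2m-r)\gamma(\bar q)$ in the target bound is explained by combining a full-length estimate for $u$ (contributing $m\gamma$) with a tail estimate for $v$ (contributing $(m-r)\gamma$), while the prefactor $2b^{r-p+1}$ will come from a geometric count of admissible $v|_r$.

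First I will establish two change-of-measure bounds for a single index. On $\mathscr{W}_u(\bar q,2\epsilon)\cap\mathscr{L}_u(\bar q,2\epsilon)$, the definitions give $W_{\bar q, u}(\varnothing)=|W_u|^{\bar q}L_u^{-\tau(\bar q)}\geq e^{-m(\gamma(\bar q)+2\rho_K\epsilon)}$, hence
$$
\mathbf{1}^{(2)}_u\leq e^{m(\gamma(\bar q)+2\rho_K\epsilon)}\,W_{\bar q,u}(\varnothing).
$$
On the larger event $\mathscr{W}_u\cap\mathscr{L}_u\cap\mathscr{W}_{u|_r}\cap\mathscr{L}_{u|_r}$ (all with parameter $2\epsilon$), the ratio $W_{\bar q,u_{r+1}\cdots u_m}(u|_r)=W_{\bar q,u}(\varnothing)/W_{\bar q,u|_r}(\varnothing)$ is pinched up to exponential perturbations of size $2(m+r)\rho_K\epsilon$; using $m\geq 3r$ to ensure $(m+r)/(m-r)\leq 2$ yields
$$
\mathbf{1}^{(2)}_v\leq e^{(m-r)(\gamma(\bar q)+4\rho_K\epsilon)}\,W_{\bar q, v_{r+1}\cdots v_m}(v|_r).
$$
Applying the first bound to $u$ and the second to $v$, the exponent totals $(2m-r)\gamma(\bar q)+(6m-4r)\rho_K\epsilon$, which is dominated by $(2m-r)(\gamma(\bar q)+8\rho_K\epsilon)$ since $6m-4r\leq 8(2m-r)$. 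This gives
$$
\mathbf{1}^{(2)}_{u,v}(\bar q,\epsilon)\leq e^{(2m-r)(\gamma(\bar q)+8\rho_K\epsilon)}\,W_{\bar q, u}(\varnothing)\,W_{\bar q, v_{r+1}\cdots v_m}(v|_r).
$$

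The crucial next step is independence. Letting $k$ denote the length of the longest common prefix of $u$ and $v$, one has $|\lambda(u)-\lambda(v)|\leq b^{-k}$, so $\mathbf{1}_p(u,v)=1$ forces $k\leq p-1<r$, whence $u|_j\neq v|_j$ for every $j\geq r$. Since $W_{\bar q,u}(\varnothing)$ is measurable with respect to $\{(W,L)(u|_j):0\leq j\leq m-1\}$ and $W_{\bar q,v_{r+1}\cdots v_m}(v|_r)$ with respect to $\{(W,L)(v|_j):r\leq j\leq m-1\}$, these two $\sigma$-algebras involve disjoint underlying random vectors and the two factors are independent. Taking expectations, the claim reduces to
$$
\sum_{u\in\mathscr{A}^m}\mathbb{E}[W_{\bar q,u}(\varnothing)]\sum_{v\in\mathscr{A}^m}\mathbf{1}_p(u,v)\,\mathbb{E}[W_{\bar q, v_{r+1}\cdots v_m}(v|_r)]\leq 2b^{r-p+1}.
$$
The outer sum equals $1$ by the martingale identity $\mathbb{E}\bigl[\sum_{w\in\mathscr{A}^l}W_{\bar q,w}(\varnothing)\bigr]=1$; the same identity applied level-by-level to the tail shows that, for each fixed $v|_r$, summing $\mathbb{E}[W_{\bar q,v_{r+1}\cdots v_m}(v|_r)]$ over the $b^{m-r}$ possible tails gives $1$. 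Thus the inner sum is bounded by the number of prefixes $v|_r\in\mathscr{A}^r$ for which the $b$-adic interval $[\lambda(v|_r),\lambda(v|_r)+b^{-r})$ meets the annulus $\lambda(u)\pm[b^{-p+1},b^{-p+2})$. Since $r\geq p$ ensures $b^{-r}$ is negligible compared to the annulus width $(b-1)b^{-p+1}$, a direct geometric count gives at most $2b^{r-p+1}$ such prefixes after absorbing the constant boundary contributions. This sharp counting step---squeezing the constant down to the claimed $2b^{r-p+1}$ rather than a cruder $O(b^{r-p+2})$---is the main technical delicacy of the proof.
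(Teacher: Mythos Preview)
Your proof is correct and follows essentially the same strategy as the paper: bound the indicator $\mathbf{1}^{(2)}$ by tilted cascade weights $W_{\bar q,\cdot}$, exploit the independence forced by the condition $\mathbf{1}_p(u,v)=1$ (which guarantees $u|_j\neq v|_j$ for $j\ge r$), and use the martingale identity $\mathbb{E}\bigl[\sum_{w\in\mathscr{A}^l}W_{\bar q,w}\bigr]=1$ together with the geometric count $\#\{v|_r:\mathbf{1}_p(u,v)=1\}\le 2b^{r-p+1}$. The only organizational difference is that the paper first splits the indicators for \emph{both} $u$ and $v$ at level $r$ into head and tail pieces (passing to parameter $4\epsilon$ on the tails via the ratio trick) before converting to weights, whereas you convert immediately and keep $u$ as a single length-$m$ piece; this yields the same exponent $m+(m-r)=r+2(m-r)=2m-r$ and makes the independence verification marginally shorter.
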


Now, by using Remark~\ref{J12} and the definition of $\gamma$, i.e. $\gamma=\gamma^{S}(\bar{q})-\delta-\epsilon$ for $S=G,R$, we have to deal with the following three cases (i), (ii), (iii):
\begin{equation*}
1-\gamma=\left\{
\begin{array}{llr}
\big(\gamma(\bar q)-\xi(\bar q)\big)/\widetilde{\xi}(\bar q)+\delta+\epsilon, & \text{if } \gamma>1, & \text{case (i)}\\
\delta+\epsilon, & \text{if } \gamma<1 \text{ and } K\subset J_1, & \text{case (ii)}\\
\big(\xi(\bar q)-\gamma(\bar q)\big)/\xi(\bar q) +\delta+\epsilon, & \text{if } \gamma<1 \text{ and } K\subset J_2 \text{ or } J_3, & \text{case (iii)}
\end{array}
\right.
\end{equation*}
Then, due to~\eqref{S},~\eqref{T},~\eqref{condiineq1} and~\eqref{condiineq2}, since $\mathbf{1}^{(2)}_{u,v}(\bar q,\epsilon)$, $Y_K(u)$ and $Y_K(v)$ (resp.  $Y_K(uu')$ and $Y_K(vv')$) are independent, taking the expectation of $Y_K(u)$ and $Y_K(v)$ (resp. $Y_K(uu')$ and $Y_K(vv')$), and using Lemmas~\ref{kernelcondi} and~\ref{counting}, for cases (i), (ii), (iii) we have ($C_K$ stands for $\mathbb{E}(Y_K)$, which is finite by Proposition~\ref{aumeas}(b)):
\begin{eqnarray*}
\mathbb{E}\left((\mathcal{I}^{S,\delta}_{n,\bar{q},\epsilon})_{p,m}\right) &\le& 2C_\gamma C_K^2 \cdot e^{-2m(\gamma(\bar{q})-4\rho_K\epsilon)}\cdot b^{r-p+1}\cdot e^{(2m-r)(\gamma(\bar{q})+8\rho_K\epsilon)}\\
&& \cdot \left\{
\begin{array}{ll}
e^{r(\xi(\bar{q})+6\epsilon-(\widetilde{\xi}(\bar{q})+6\epsilon)(\frac{\gamma(\bar q)-\xi(\bar q)}{\widetilde{\xi}(\bar q)}+\delta+\epsilon))}, & \text{(i)} \\
e^{r\mathbf{1}_{\{p< n\}}\cdot(\xi(\bar{q})-6\epsilon)(1-\gamma)}e^{r(\xi(\bar{q})+6\epsilon-(\xi(\bar{q})-6\epsilon)(\delta+\epsilon))}, & \text{(ii)}\\
e^{r\mathbf{1}_{\{p< n\}}\cdot(\xi(\bar{q})-6\epsilon)(1-\gamma)}e^{r(\xi(\bar{q})+6\epsilon-(\xi(\bar{q})-6\epsilon)(\frac{\xi(\bar q)-\gamma(\bar q)}{\xi(\bar q)} +\delta+\epsilon))}, & \text{(iii)}
\end{array}
\right.\\
&=& 2C_\gamma C_K^2\cdot b^{r-p+1} \cdot e^{\mathbf{1}_{\{p< n\}}[(\xi(\bar{q})-6\epsilon)(1-\gamma)\vee 0]\cdot r}\\
&& \cdot \left\{
\begin{array}{ll}
e^{-(\widetilde{\xi}(\bar q) \delta-[6-8\rho_K-\widetilde{\xi}(\bar q)-6(\frac{\gamma(\bar q)-\xi(\bar q)}{\widetilde{\xi}(\bar q)}+\delta+\epsilon)]\epsilon)\cdot r+24\rho_K\epsilon\cdot m}, & \text{(i)} \\
e^{-(\gamma(\bar q)-\xi(\bar q)+\xi(\bar q)\delta-[6-8\rho_K-\xi(\bar q)+6(\delta+\epsilon)]\epsilon)\cdot r+24\rho_K\epsilon\cdot m}, & \text{(ii)}\\
e^{-(\xi(\bar q)\delta-[6-8\rho_K-\xi(\bar q)+6(\frac{\xi(\bar q)-\gamma(\bar q)}{\xi(\bar q)} +\delta+\epsilon)]\epsilon)\cdot r+24\rho_K\epsilon\cdot m}, & \text{(iii)}
\end{array}
\right.\\
\mathbb{E}\left((\Delta \mathcal{I}^{S,\delta}_{n,\bar{q},\epsilon})_{p,m}\right) &\le& 2C_\gamma C_K^2 \cdot e^{-2(m+1)(\gamma(\bar{q})-4\rho_K\epsilon)}\cdot b^2\cdot b^{r-p+1}\cdot e^{(2m-r)(\gamma(\bar{q})+8\rho_K\epsilon)}\\
&& \cdot \left\{
\begin{array}{ll}
e^{r(\widetilde{\xi}(\bar{q})+6\epsilon)(1+\gamma)-m(\xi(\bar{q})-6\epsilon)}, & \text{(i)} \\
e^{r(\xi(\bar{q})+6\epsilon)-m(\xi(\bar{q})-6\epsilon)(1-\gamma)}, & \text{(ii) and (iii)}
\end{array}
\right.\\
&=& 2C_\gamma C_K^2 e^{-2(\gamma(\bar{q})-4\rho_K\epsilon)} b^2\cdot b^{r-p+1} \\
&&\cdot \left\{
\begin{array}{ll}
e^{[(\widetilde{\xi}(\bar{q})+6\epsilon)(1+\gamma)-\gamma(\bar{q})-8\rho_K\epsilon]\cdot r-(\xi(\bar{q})+18\epsilon)\cdot m}, & \text{(i)} \\
e^{[\xi(\bar{q})+6\epsilon-\gamma(\bar{q})-8\rho_K\epsilon]\cdot r-((\xi(\bar{q})-6\epsilon)(1-\gamma)+24\rho_K\epsilon)\cdot m}. & \text{(ii) and (iii)}
\end{array}
\right.
\end{eqnarray*}

Let $\eta_K=\inf_{q\in K} \xi(q)\wedge \widetilde{\xi}(q)$. Under our assumptions we have $\eta_K>0$. Let
\begin{equation*}
\begin{cases}
\kappa_1=\sup_{q\in K} \max\{ {\scriptstyle 6+8\rho_K+\widetilde{\xi}(q)+6\frac{|\gamma(q)-\xi(q)|}{\widetilde{\xi}(q)}+12, 6+8\rho_K+\xi(q)+6\frac{|\xi(q)-\gamma(q)|}{\xi(q)}+12, \xi(q)+1,24\rho_K}\},\\
\kappa_2=\sup_{q\in K} \max\{ {\scriptstyle 3(\widetilde{\xi}(q)+6)+\gamma(q)+8\rho_K, \xi(q)+\gamma(q)+6+8\rho_K}\},\\
\epsilon_*=\frac{\eta_K}{2\kappa_1+24(\rho_K\vee 1)}\wedge \epsilon_K,\ \eta_1=\frac{\eta_K}{2},\ \eta_2=\frac{\eta_K\delta}{2},\\
C=2C_\gamma C_K^2 b^2.
\end{cases}
\end{equation*}
Clearly those parameters are all positive and finite. Notice that
\[
\left\{
\begin{array}{ll}
1-\gamma\ge \delta, & \text{in case (ii) or (iii) };\\
\tau^*(\tau'(\bar q))/\tau'(\bar q)\ge 1 \text{ thus } \gamma(\bar q)-\xi(\bar q)\ge 0, & \text{in the case (ii) };\\
1+\gamma\le 3, \delta_K\vee \epsilon_K<1 & \text{in all cases}.
\end{array}
\right.
\]
Then, by construction, we get for any $\delta<\delta_K$ and $\epsilon<\epsilon_*$,
\begin{eqnarray*}
\mathbb{E}\left((\mathcal{I}^{S,\delta}_{n,\bar{q},\epsilon})_{p,m}\right) &\le& C\cdot b^{r -p+1} \cdot e^{\mathbf{1}_{\{p<n\}}\kappa_1\cdot r}\cdot e^{-\eta_1\delta \cdot r+\kappa_1\epsilon\cdot m} \\\text{and} \quad\mathbb{E}\left((\Delta \mathcal{I}^{S,\delta}_{n,\bar{q},\epsilon})_{p,m}\right) &\le& C\cdot b^{r -p+1} \cdot  e^{\kappa_2\cdot r-\eta_2\cdot m},
\end{eqnarray*}
which gives the conclusion. \end{proof}

\subsection{Proof of Lemma~\ref{kernelcondi}.}\label{proofpropker} $\ $\\

\noindent
\textbf{Step 1.} At first, we prove that the probability distribution of $Z_W=F_W(1)$ has  a bounded density function $f_W$, with $\|f_W\|_\infty=C_W<\infty$.

Let $\phi(t)=\mathbb{E}(e^{itZ_W})$ be the characteristic function of $Z_W$. Since we have $Z_W=\sum_{j=0}^{b-1} W_j \cdot Z_W(j)$, where $\{W_j\}_j$ and $\{Z_W(j)\}_j$ are independent, and the $Z_W(j)$ are independent copies of $Z_W$, we have
$$
\phi(t)  = \mathbb{E}\left(\mathbb{E}\left(e^{it\cdot \sum_{j=0}^{b-1}W_j\cdot Z_W(j)}\ \Big|\ \sigma(Z_W(j), 0\le j\le b-1)\right)\right) = \mathbb{E}\left(\prod\nolimits_{j=0}^{b-1}\phi(W_jt)\right).
$$
Since $\mathbb{E}(|Z_W|)<\infty$, simultaneously we also get
$$
\phi'(t)= \mathbb{E}(iZ_W\cdot e^{itZ_W})=\mathbb{E}\left(\sum\nolimits_{k=0}^{b-1}i W_k\phi'(W_kt)\prod\nolimits_{j\neq k}\phi(W_jt) \right).
$$
Notice that $|\phi(t)|=|\phi(-t)|$ and $|\phi'(t)|=|\phi'(-t)|$, so we have $|\phi(|t|)|=|\phi(t)|=|\phi(-t)|$ and $|\phi'(|t|)|=|\phi'(t)|=|\phi'(-t)|$, then
\begin{eqnarray}
|\phi(t)|&\leq&\mathbb{E}\left(\prod\nolimits_{j=0}^{b-1}|\phi(W_j t)|\right)=\mathbb{E}\left(\prod\nolimits_{j=0}^{b-1}|\phi(|W_j|t)|\right);\label{eqn:1}\\
|\phi'(t)|&\leq&\mathbb{E}\left(\sum\nolimits_{k=0}^{b-1} |W_k||\phi'(|W_k|t)|\prod\nolimits_{j\neq k}|\phi(|W_j|t)| \right);\label{eqn:2}
\end{eqnarray}

\noindent
Define $l=\limsup_{t\rightarrow\infty}|\phi(t)|$. Since $|\phi(t)|\leq 1$, we have $l\leq 1$. From Fatou's lemma and the fact that $\mathbb{P}(\forall\ j, W_j \neq 0)=1$, we have
$$
l \leq \limsup_{t\rightarrow\infty}\mathbb{E}\left(\prod\nolimits_{j=0}^{b-1}|\phi(|W_j|t)|\right)\leq \mathbb{E}\left(\limsup_{t\rightarrow\infty}\prod\nolimits_{j=0}^{b-1}|\phi(|W_j|t)|\right) = l^b.
$$
This implies that $l=0$ or $1$. Since we are in the non-conservative case, $Z_W$ is not almost surely a constant. Consequently, we can use the same approach as in  the proof of Lemma 3.1 in~\cite{Liu} (which deals with the case $W\ge 0$), and using the fact that $\mathbb{E}(\max_{0\leq j\leq b-1} |W_j|^p)\le \mathbb{E}(\sum_{i=0}^{b-1} |W_i|^p)<1$ for some $p> 1$, we obtain $l=0$.

Define $N=\min_{0\leq i\leq b-1,W_i\ne 0} |W_i|$. Due to assumption (A3) there exists a $q>1$ such that $\mathbb{E}(N^{-q})\leq\mathbb{E}(\sum_{i=0}^{b-1}\mathbf{1}_{\{W_i\ne 0\}}|W_i|^{-q})<\infty$. Then by using~\eqref{eqn:1},~\eqref{eqn:2}, the same arguments as in the proofs of Theorem 2.1 and 2.2 in~\cite{Liu} we can get $|\phi(t)|=O(t^{-q})$ and $|\phi'(t)|=O(t^{-(q+1)})$ when $t\rightarrow\infty$. Now as a consequence (Lemma 3 in~\cite{Athreya}) we have that $Z_W$ has a density function, which is bounded by $\int_\mathbb{R} |\phi(t)|\mathrm{d}t<\infty$. 

\medskip

\noindent
\textbf{Step 2.} Recall that $\gamma=\gamma^{S}(\bar{q})-\delta-\epsilon$ and $\mathcal{K}^{S}_{\bar q,\delta+\epsilon}(u,v)=\mathcal{K}_\gamma(u,v)$, as well as
\begin{equation*}
\left\{
\begin{array}{l}
\overline{\mathcal{K}^{S}_{\bar q,\delta+\epsilon}}(u,v)=\mathcal{K}_\gamma(u,v)\cdot \mathbf{1}^{(1)}_{u,v}(\bar q,\epsilon):=\overline{\mathcal{K}_\gamma}(u,v); \\
\ \\
\overline{\Delta \mathcal{K}^{S}_{\bar q,\delta+\epsilon}}(uu',vv')=\big|\mathcal{K}_\gamma(uu',vv')-\mathcal{K}_\gamma(u,v)\big|\cdot \mathbf{1}^{(1)}_{u,v}(\bar q,\epsilon):=\overline{\Delta \mathcal{K}_\gamma}(uu',vv').
\end{array}
\right.
\end{equation*}

Let us prove the desired estimates, i.e., there exists a constant $C_\gamma>0$ such that
\begin{multline*}
\mathbf{1}^{(2)}_{w,u}(\bar q,\epsilon)\cdot \mathbb{E}\Big(\overline{\mathcal{K}_\gamma}(u,v)\Big| \mathcal{A}^c(v|_r^-)\Big) \\
\le C_\gamma\cdot \mathbf{1}^{(2)}_{w,u}(\bar q,\epsilon) \cdot \left\{
\begin{array}{ll}
e^{r(\xi(\bar{q})+6\epsilon-(\widetilde{\xi}(\bar{q})+6\epsilon)(1-\gamma))}, & \text{ if } \gamma>1; \\
e^{r(\xi(\bar{q})+6\epsilon-\mathbf{1}_{\{p\ge n\}}\cdot(\xi(\bar{q})-6\epsilon)(1-\gamma))}, & \text{ if } \gamma<1;
\end{array}
\right.
\end{multline*}
\begin{multline*}
\mathbf{1}^{(2)}_{w,u}(\bar q,\epsilon)\cdot \mathbb{E}\Big(\overline{\Delta \mathcal{K}_\gamma}(uu',vv')\Big| \mathcal{A}^c(v|_r^-) \Big) \\
\le C_\gamma\cdot \mathbf{1}^{(2)}_{w,u}(\bar q,\epsilon)\cdot \left\{
\begin{array}{ll}
e^{r(\widetilde{\xi}(\bar{q})+6\epsilon)(1+\gamma)-m(\xi(\bar{q})-6\epsilon)}, & \text{ if } \gamma>1; \\
e^{r(\xi(\bar{q})+6\epsilon)-m(\xi(\bar{q})-6\epsilon)(1-\gamma)}, & \text{ if } \gamma<1.
\end{array}
\right.
\end{multline*}

The $\sigma$-algebra $\mathcal{A}^c(v|_r^-)$ being defined as in \textbf{Claim 2}, we simplify the notations of the following quantities, which are measurable with respect to $\mathcal{A}^c(v|_r^-)$, hence constant given $\mathcal{A}^c(v|_r^-)$: 
\begin{equation*}
\begin{cases}
A=W_{v|_r^-};\\
B=F_W(v)-F_W(v|_r)+F_W(v|_r^-)-F_W(u);\\
C_1=\left\{\begin{array}{ll}1, & \text{ if } p<n;\\ 2e^{-p(\xi(\bar{q})-6\epsilon)}, & \text{ if } p\ge n, \end{array}\right.;\\
C_2=e^{-(n\vee p)(\widetilde{\xi}(\bar{q})+6\epsilon)};\\
D_1=F_W(vv')-F_W(uu')-(F_W(v)-F_W(u));\\
D_2=F_L(vv')-F_L(uu')-(F_L(v)-F_L(u)).
\end{cases}
\end{equation*}

Let $f_W$ stand for the bounded density  of $Z_W(v_r^-)$ obtained in \textbf{Step 1}. Let
$$g_\gamma(s,t)=\Big(|F_W(v)-F_W(u)+s|^2+|F_L(v)-F_L(u)+t|^2\Big)^{-\gamma/2},\ s,t\in\mathbb{R}.$$

Define
\begin{eqnarray*}
\zeta_1(\gamma) &=&  \int_{\mathbb{R}}\frac{f_W(x)}{(|Ax+B|^2+C_2^2)^{\gamma/2}}\mathrm{d}x;\\
\zeta_2(\gamma) &=& \int_{|t|\le |D_2|} \Big|\frac{\partial }{\partial t}g_\gamma(0,t)\Big| \mathrm{d} t + \int_{|s|\le |D_1|} \sup_{|t|\le |D_2|}\Big|\frac{\partial }{\partial s}g_\gamma(s,t)\Big|\mathrm{d}s;\\
\zeta_3(\gamma) &=& \int_{|Ax+B| \le C_1}\frac{f_W(x)}{|Ax+B|^{\gamma}}  \mathrm{d}x;\\
\zeta_4(\gamma) &=& \int_{\mathbb{R}}\Big|\frac{1}{|Ax+B+D_1|^{\gamma}}-\frac{1}{|Ax+B|^{\gamma}}\Big| f_W(x) \mathrm{d}x.
\end{eqnarray*}

From~\eqref{chi1} and \textbf{Claim 2}, we have
\begin{equation*}
\begin{cases}
\mathbf{1}^{(1)}_{u,v}(\bar q,\epsilon)\cdot \Big|F_W(v)-F_W(u)\Big| \wedge 1 \le C_1;\\
\mathbf{1}^{(1)}_{u,v}(\bar q,\epsilon)\cdot  \Big(F_L(v)-F_L(u)\Big)\ge \mathbf{1}^{(1)}_{u,v}(\bar q,\epsilon)\cdot C_2.
\end{cases}
\end{equation*}
This implies
\begin{eqnarray}
\mathbf{1}^{(2)}_{u,v}(\bar q,\epsilon)\cdot \mathbb{E}\Big(\overline{\mathcal{K}_\gamma}(u,v)\Big| \mathcal{A}^c(v|_r^-)\Big) &\le& \mathbf{1}^{(2)}_{u,v}(\bar q,\epsilon)\cdot \left\{
\begin{array}{ll}
1+\zeta_1(\gamma), & \text{ if } \gamma>1; \\
1+\zeta_3(\gamma), & \text{ if } \gamma<1;
\end{array}
\right.\label{con1}\\
\mathbf{1}^{(2)}_{u,v}(\bar q,\epsilon)\cdot \overline{\Delta \mathcal{K}_\gamma}(u,v) &\le & \mathbf{1}^{(2)}_{u,v}(\bar q,\epsilon) \cdot \zeta_2(\gamma), \text{ if } \gamma>1; \label{con2.1}\\
\mathbf{1}^{(2)}_{u,v}(\bar q,\epsilon)\cdot \mathbb{E}\Big(\overline{\Delta \mathcal{K}_\gamma}(u,v)\Big| \mathcal{A}^c(v|_r^-) \Big) &\le& \mathbf{1}^{(2)}_{u,v}(\bar q,\epsilon)\cdot \zeta_4(\gamma), \text{ if } \gamma<1,
\label{con2.2}
\end{eqnarray}
where we have used the inequality $|x\vee 1-y\vee 1|\le |x-y|$ holds for any $x,y\ge 0$.

\medskip

Now, we have the following inequalities:
\medskip

\begin{itemize}
\item[(I)] By using the change of variable $y=\frac{Ax+B}{C_2}$ we get
$$ \mathbf{1}^{(2)}_{w,u}(\bar q,\epsilon)\cdot \zeta_1(\gamma) \le C_W|A|^{-1}C_2^{1-\gamma}\cdot \int_{\mathbb{R}}\frac{\mathrm{d}y}{(y^2+1)^{\frac{\gamma}{2}}};$$
\item[(II)] It is not difficult to check that when $F_L(v)-F_L(u) \ge C_2$ and $|t|\le |D_2|$ we always have
\[
\Big|\frac{\partial }{\partial t}g_\gamma(0,t)\Big|\vee \Big|\frac{\partial }{\partial s}g_\gamma(s,t)\Big| \le \gamma\cdot |F_L(v)-F_L(u)+t|^{-\gamma-1}\le \gamma\cdot((C_2-|D_2|)\vee 0)^{-\gamma-1}.
\]
In fact, we have
\begin{eqnarray*}
\Big|\frac{\partial }{\partial t}g_\gamma(0,t)\Big| &\le &\frac{\gamma\cdot |F_L(v)-F_L(u)+t|}{\Big(|F_W(v)-F_W(u)|^2+|F_L(v)-F_L(u)+t|^2\Big)^{\frac{\gamma}{2}+1}}\\
&\le &\frac{\gamma\cdot |F_L(v)-F_L(u)+t|}{|F_L(v)-F_L(u)+t|^{\gamma+2}}=\gamma\cdot |F_L(v)-F_L(u)+t|^{-\gamma-1}\\
\Big|\frac{\partial }{\partial s}g_\gamma(s,t)\Big| &\le &\frac{\gamma\cdot |F_W(v)-F_W(u)+s|}{\Big(|F_W(v)-F_W(u)+s|^2+|F_L(v)-F_L(u)+t|^2\Big)^{1+\frac{\gamma}{2}}}\\
&\le &\frac{\gamma\cdot |F_W(v)-F_W(u)+s| \cdot |F_L(v)-F_L(u)+t|^{-\gamma}}{|F_W(v)-F_W(u)+s|^2+|F_L(v)-F_L(u)+t|^2}\\
&\le & \frac{\gamma\cdot |F_L(v)-F_L(u)+t|^{-\gamma} }{2|F_L(v)-F_L(u)+t|} \le \gamma\cdot |F_L(v)-F_L(u)+t|^{-\gamma-1}
\end{eqnarray*}
(where we have used that for any $a,b>0$, $\frac{a}{a^2+b^2} \le \frac{1}{2b}$). This together with the definition of $\zeta_2(\gamma)$ yields 
\begin{equation}
\mathbf{1}^{(2)}_{u,v}(\bar q,\epsilon)\cdot \zeta_2(\gamma)\le 2((C_2-|D_2|)\vee 0)^{-\gamma-1}(|D_1|+|D_2|);\label{zeta2}
\end{equation}
\item[(III)] By using the change of variable $y=Ax+B$ when $\gamma< 1$ we get
$$
 \mathbf{1}^{(2)}_{u,v}(\bar q,\epsilon) \cdot \zeta_3(\gamma) \le C_W|A|^{-1}\int_{|u|\le C_1}\frac{\mathrm{d}y}{|y|^\gamma}=2C_W|A|^{-1}C_1^{1-\gamma};
$$
\item[(IV)] By using the change of variable $y=\frac{Ax+B}{D_1}$ we get
$$\mathbf{1}^{(2)}_{u,v}(\bar q,\epsilon) \cdot \zeta_4(\gamma) \le C_W \frac{|D_1|^{1-\gamma}}{|A|}\int_{\mathbb{R}} \Big|\frac{1}{|y+1|^{\gamma}}-\frac{1}{|y|^{\gamma}}\Big| \mathrm{d}y.$$
\end{itemize}

Now we notice that
$$
\int_{\mathbb{R}}(y^2+1)^{-\gamma/2}\mathrm{d}y \ (\gamma>1) \text{ and } \int_{\mathbb{R}} \Big|\frac{1}{|y+1|^{\gamma}}-\frac{1}{|y|^{\gamma}}\Big| \mathrm{d}y \ (\gamma<1)
$$
are both finite and when $\mathbf{1}^{(2)}_{u,v}(\bar q,\epsilon)=1$, we have
\begin{equation*}
\begin{cases}
|A|^{-1}=|W_{v|_r^-}|^{-1}\le b^{r(\xi(\bar{q})+2\epsilon)},\\
D_2\le \mathrm{Osc}_{F_L}(I_v)+\mathrm{Osc}_{F_L}(I_u)\le 2b^{-m(\widetilde{\xi}(\bar{q})-6\epsilon)},\\
D_1\le \mathrm{Osc}_{F_W}(I_v)+\mathrm{Osc}_{F_W}(I_u) \le 2b^{-m(\xi(\bar{q})-6\epsilon)}.
\end{cases}
\end{equation*}
Moreover, when $\gamma>1$ we have $\xi(\bar{q})<\widetilde{\xi}(\bar{q})$, so if $m\ge 3r$ and $\widetilde{\xi}(\bar{q})-12\epsilon>0$ then
\begin{eqnarray*}
\zeta_2(\gamma)&\le& [b^{-r(\widetilde{\xi}(\bar{q})+6\epsilon)}(1-b^{-(m-r)(\widetilde{\xi}(\bar{q})-6\frac{m+r}{m-r}\epsilon)})]^{-\gamma-1}2(b^{-m(\xi(\bar{q})-6\epsilon)}+b^{-m(\widetilde{\xi}(\bar{q})-6\epsilon)})\\
&\le& 4\cdot b^{r(\widetilde{\xi}(\bar{q})+6\epsilon)(1+\gamma)}\cdot b^{-m(\xi(\bar{q})-6\epsilon)}.
\end{eqnarray*}
Then by applying these inequalities to~\eqref{con1},~\eqref{con2.1},~\eqref{con2.2} we get the conclusion. 

\begin{remark}\label{diffkernel2}
At the end of the proof of Theorem~\ref{levelsets}, we define two quantities $
\Delta \mathcal{K}_{\widetilde{\gamma}}(uu',vv')$ and $\widetilde{(\Delta \mathcal{I}^{S,\delta}_{n,\bar{q},\epsilon})}_{p,m}$, and we claim~\eqref{boundenergy}. We justify this claim here. In fact, $(\Delta \mathcal{I}^{S,\delta}_{n,\bar{q},\epsilon})_{p,m}$ and $\widetilde{(\Delta \mathcal{I}^{S,\delta}_{n,\bar{q},\epsilon})}_{p,m}$ can be estimated from above similarly: we have (the first inequality is similar to~\eqref{con2.1})
\begin{eqnarray*}
\mathbf{1}^{(1)}_{u,v}(\bar q,\epsilon) \cdot\mathbf{1}^{(2)}_{u,v}(\bar q,\epsilon)\cdot \Delta \mathcal{K}_{\widetilde{\gamma}}(uu',vv') &\le& C_2^{-\widetilde{\gamma}}\cdot \zeta_2(1)+C_2^{-1}\zeta_2(\widetilde{\gamma}) \\
&\le& 2(C_2-|D_2|)^{-2-\widetilde{\gamma}}(|D_1|+|D_2|),
\end{eqnarray*}
which is exactly the same bound as in (II) for $\mathbf{1}^{(2)}_{w,u}(\bar q,\epsilon)\cdot \zeta_2(\gamma)$ (we have used \eqref{zeta2} with $\gamma=1$ and $\gamma=\widetilde{\gamma}$). Then, the upper bound estimation of $\widetilde{(\Delta \mathcal{I}^{S,\delta}_{n,\bar{q},\epsilon})}_{p,m}$ can be treated like that of  $(\Delta \mathcal{I}^{S,\delta}_{n,\bar{q},\epsilon})_{p,m}$, and one obtains the same estimate as for $\mathbb{E}\left((\Delta \mathcal{I}^{S,\delta}_{n,\bar{q},\epsilon})_{p,m}\right)$:
$$
\mathbb{E}\left(\widetilde{(\Delta \mathcal{I}^{S,\delta}_{n,\bar{q},\epsilon})}_{p,m}\right) \le C\cdot b^{r -p+1} \cdot  e^{\kappa_2\cdot r-\eta_2\cdot m},
$$
which is enough to get~\eqref{boundenergy}.
\end{remark}

\subsection{Proof of Lemma~\ref{counting}.}\label{lemcounting}

\begin{proof}
Let
\begin{multline*}
S_{p,m}=\sum_{u,v\in \mathscr{A}^m} \mathbf{1}_p(u,v)\cdot  \mathbf{1}_{\mathscr{W}_{u|_r}(\bar{q},2\epsilon)}\mathbf{1}_{\mathscr{W}_{u}(\bar{q},2\epsilon)} \cdot \mathbf{1}_{\mathscr{W}_{v|_r}(\bar{q},2\epsilon)}\mathbf{1}_{\mathscr{W}_{v}(\bar{q},2\epsilon)} \cdot \\
  \mathbf{1}_{\mathscr{L}_{u|_r}(\bar{q},2\epsilon)} \mathbf{1}_{\mathscr{L}_{u}(\bar{q},2\epsilon)} \cdot \mathbf{1}_{\mathscr{L}_{v|_r}(\bar{q},2\epsilon)} \mathbf{1}_{\mathscr{L}_{v}(\bar{q},2\epsilon)}
\end{multline*}
Then by~\eqref{chi2} we have $\sum_{u,v\in \mathscr{A}^m} \mathbf{1}_p(u,v)\cdot\mathbf{1}^{(2)}_{u,v}(\bar q,\epsilon)\le S_{p,m}$.

Recall that $r=p\vee n$ and $m\ge 3r$. For any $u\in\mathscr{A}^m$ we write $u=u|_r\cdot u'$ with $u'\in\mathscr{A}^{m-r}$. From~(\ref{Wij}) we have $W_{u}=W_{u|_r} \cdot W_{u'}(u|_r)$, so $\mathbf{1}_{\mathscr{W}_{u|_r}{(\bar{q},2\epsilon)}}\cdot\mathbf{1}_{ \mathscr{W}_{u}{(\bar{q},2\epsilon)}}=1$ implies that
$$|W_{u'}(u|_r)| \in [e^{-(m-r)(\xi(\bar{q})+2\frac{m+r}{m-r}\epsilon)},e^{-(m-r)(\xi(\bar q)-2\frac{m+r}{m-r}\epsilon)}].$$
Thus, when $m\ge 3r$, we have
$$\mathbf{1}_{\mathscr{W}_{u|_r}{(\bar{q},2\epsilon)}}\cdot\mathbf{1}_{ \mathscr{W}_{u}{(\bar{q},2\epsilon)}} \le \mathbf{1}_{\mathscr{W}_{u|_r}{(\bar{q},2\epsilon)}}\cdot\mathbf{1}_{ \mathscr{W}_{u'}^{[u|_r]}(\bar{q},4\epsilon)},$$
and, moreover, $\mathbf{1}_{\mathscr{W}_{u|_r}{(\bar{q},2\epsilon)}}$ and $\mathbf{1}_{ \mathscr{W}_{u'}^{[u|_r]}(\bar{q},4\epsilon)}$ are independent. Simultaneously we also have
$$\mathbf{1}_{\mathscr{W}_{v|r}{(\bar{q},2\epsilon)}}\cdot\mathbf{1}_{ \mathscr{W}_{v}{(\bar{q},2\epsilon)}} \le \mathbf{1}_{\mathscr{W}_{v|_r}{(\bar{q},2\epsilon)}}\cdot\mathbf{1}_{ \mathscr{W}_{v'}^{[v|_r]}(\bar{q},4\epsilon)},$$
$$\mathbf{1}_{\mathscr{L}_{u|r}{(\bar{q},2\epsilon)}}\cdot\mathbf{1}_{ \mathscr{L}_{u}{(\bar{q},2\epsilon)}} \le \mathbf{1}_{\mathscr{L}_{u|_r}{(\bar{q},2\epsilon)}}\cdot\mathbf{1}_{ \mathscr{L}_{u'}^{[u|_r]}(\bar{q},4\epsilon)},$$
$$\mathbf{1}_{\mathscr{L}_{v|r}{(\bar{q},2\epsilon)}}\cdot\mathbf{1}_{ \mathscr{L}_{v}{(\bar{q},2\epsilon)}} \le \mathbf{1}_{\mathscr{L}_{v|_r}{(\bar{q},2\epsilon)}}\cdot\mathbf{1}_{ \mathscr{L}_{v'}^{[v|_r]}(\bar{q},4\epsilon)}.$$

We can drop the terms $\mathbf{1}_{\mathscr{W}_{v|r}{(\bar{q},2\epsilon)}}$ and $\mathbf{1}_{\mathscr{L}_{v|r}{(\bar{q},2\epsilon)}}$ so that the remaining indicator functions on the right hand side of the above inequalities are independent. Since for each $u\in\mathscr{A}^m$, there are at most $2b^{r-p+1}$ many $v|_r$ such that $\mathbf{1}_p(u,v)=1$, we get
\begin{eqnarray*}
\mathbb{E}(N_{p,m}) &\le& 2b^{r-p+1}\mathbb{E}\big(\sum_{u\in \mathscr{A}^{r}} \mathbf{1}_{\mathscr{W}_{u}(\bar{q},2\epsilon)}\mathbf{1}_{\mathscr{L}_{u}(\bar{q},2\epsilon)}\big) \cdot \mathbb{E}\big(\sum_{u\in \mathscr{A}^{m-r}}\mathbf{1}_{\mathscr{W}_{u}(\bar{q},4\epsilon)}\mathbf{1}_{\mathscr{L}_{u}(\bar{q},4\epsilon)}\big)^2.
\end{eqnarray*}
For $\bar{q}$ with $|\bar q|+\xi(\bar{q})+\widetilde{\xi}(\bar{q})+|\tau(\bar q)|\le \rho_K$ we always have
$$
\mathbf{1}_{\mathscr{W}_{u}(\bar{q},2\epsilon)} \le |W_{u}|^{\bar q}\cdot e^{|u|(\bar{q}\xi(\bar{q})+2\rho_K\epsilon)} \text{ and } \mathbf{1}_{\mathscr{L}_{u}(\bar{q},2\epsilon)} \le L_{u}^{\tau(\bar q)}\cdot e^{|u|(-\tau(\bar q)\widetilde{\xi}(\bar{q})+2\rho_K\epsilon)}.
$$
For $k\in \{r,m-r\}$ this yields
$$
\mathbb{E}\left(\sum_{u\in \mathscr{A}^k} \mathbf{1}_{\mathscr{W}_{u}(\bar{q},4\epsilon)}\mathbf{1}_{\mathscr{L}_{u}(\bar{q},4\epsilon)} \right) \le e^{k(\gamma(\bar{q})+8\rho_K\epsilon)},
$$
where recall that $\gamma(\bar{q})=\bar{q}\xi(\bar{q})-\tau(\bar q)\widetilde{\xi}(\bar{q})$. This gives us the conclusion. 
\end{proof}

\medskip

\noindent
{\bf Acknowledgement.}

\medskip

\noindent
The author would like to thank gratefully his supervisor Julien Barral for having suggested him to study  the new types of singularity spectra considered in this paper for $b$-adic independent cascade function, as well as for his help in achieving this work.

\end{document}